\documentclass[a4paper]{article}

\usepackage{amsfonts, amsmath, amsthm, empheq, amssymb, bm, setspace, mathrsfs}
\usepackage[numbers,sort]{natbib}
\usepackage[colorlinks,pdfborder=001,linkcolor=blue,citecolor=blue,backref=page]{hyperref}
\makeatletter

\newcommand{\Rmnum}[1]{\expandafter\@slowromancap\romannumeral #1@}
\makeatother

\allowdisplaybreaks[4]

\textwidth 163mm \textheight 215mm \oddsidemargin 5mm

\baselineskip 14pt

\newtheorem{thm}{Theorem}[section]

\newtheorem{lem}[thm]{Lemma}

\def\R{\Bbb R}
\def\De{\Delta}
\def\al{\alpha}
\def\be{\beta}
\def\de{\delta}
\def\ep{\epsilon}

\def\e{\mathrm e}

\def\f{\frac}

\def\ga{\gamma}
\def\Ga{\Gamma}

\def\la{\lambda}
\def\vp{\varphi}
\def\na{\nabla}
\def\om{\omega}
\def\Om{\Omega}
\def\ov{\overline}
\def\pa{\partial}

\def\wt{\widetilde}

\title{\Large\bf\boldmath
Inverse coefficients problem for a magnetohydrodynamics system}

\author{\large Xinchi HUANG$^\dag$}

\date{October 2016}

\begin{document}
\maketitle

\renewcommand{\thefootnote}{\fnsymbol{footnote}}
\footnotetext{\hspace*{-5mm}
\begin{tabular}{@{}r@{}p{13cm}@{}}
$^\dag$
& Graduate School of Mathematical Sciences,
the University of Tokyo,
3-8-1 Komaba, Meguro-ku, Tokyo 153-8914, Japan.
E-mail: huangxc@ms.u-tokyo.ac.jp,
\end{tabular}}

\begin{abstract}
\noindent In this article, we consider a magnetohydrodynamics system for incompressible flow in a three-dimensional bounded domain. Firstly, we give the stability results for our inverse coefficients problem. Secondly, we establish and prove two Carleman estimates both for direct problem and inverse problem. Finally, we complete the proof of stability result in terms of the above Carleman estimates.
\end{abstract}

\textbf{Keywords:}
magnetohydrodynamics, Carleman estimates, inverse coefficients problem, stability inequality

\section{Introduction}
\label{sec-intro}
Magnetohydrodynamics(MHD) is the study of the magnetic properties of electrically conducting fluids such as plasmas, liquid metals and salt water. The set of equations in three dimension is introduced by combining the Navier-Stokes equations and Maxwell's equations:
$$
\left\{
\begin{aligned}
& \ \pa_t u + \mathrm{div}(\rho u\otimes u - P(u,p)) - \mu \mathrm{rot}\; H\times H = F, \\
& \ \pa_t H -\mathrm{rot}(u\times H) = -\mathrm{rot}(\frac{1}{\sigma\mu}\mathrm{rot}\; H), \\
& \ \mathrm{div} u = \mathrm{div} H = 0
\end{aligned}
\right.
$$
where the notations $\times$ and $\otimes$ mean cross product and outer product which are defined as follows: for any vectors $A=(A_1,A_2,A_3)^T$ and $B=(B_1,B_2,B_3)^T$,
$$
\begin{aligned}
&A\times B:= (A_2B_3 - A_3B_2, A_3B_1 - A_1B_3, A_1B_2 - A_2B_1), &A\otimes B:= A\;B^T.
\end{aligned}
$$
Here, $u=(u_1,u_2,u_3)^T$, $H=(H_1,H_2,H_3)^T$ denote the velocity vector and the magnetic field intensity respectively. $P(u,p)$ denotes the stress tensor which is determined by generalized Newton's law as
$$
P(u,p) = -pI + 2\nu\mathcal{E}(u)
$$
where $p$ denotes the pressure and $\mathcal{E}(u)$ is called Cauchy stress tensor defined by
$$
\mathcal{E}(u) := \frac{1}{2}(\na u + (\na u)^T).
$$
The coefficient $\nu$ is related to the viscosity of the fluids. Furthermore, $\sigma$ and $\mu$ are the electrical conductivity and magnetic permeability respectively. For the derivation of above equations, we refer to Li and Qin \cite{LQ13}. We don't pay attention to temperature distribution of the fluid and thus neglect the energy equation.
\vspace{0.2cm}

There are some papers for MHD systems. \cite{DL13,FL15} studied some regularity criteria for incompressible MHD system in three dimension. In \cite{DL13}, the authors established some general sufficient conditions for global regularity of strong solutions to incompressible three-dimensional MHD system. While \cite{FL15} gave a logarithmic criterion for generalized MHD system. We should also mention the study of exact controllability for MHD. Hav\^{a}rneanu, Popa and Sritharan \cite{HPS06,HPS07} studied it with locally internal controls both in two and in three dimension. In their papers, they have established a kind of Carleman estimate for MHD system in order to solve their controllability problems. However, it is not enough to consider inverse problems, especially inverse source problems. We will clarify this statement later.
\vspace{0.2cm}

In this article, our main method is Carleman estimate. It is an $L^2$- weighted estimate with large parameter(s) for a solution to a partial differential equation. The idea was first introduced by Carleman \cite{C39} for proving the unique continuation for a two-dimensional elliptic equation. From the 1980s, there have been great concerns for the estimate itself and its applications as well. For remarkable general treatments, we refer to \cite{E86,H85,I90,I98,T96,T81}. Carleman estimate has then become one of the general techniques in studying unique continuation and stability for inverse problems. Since then, there are many papers considering different inverse problems for a variety of partial differential equations. We list some work for the well-known equations in mathematical physics. For hyperbolic equation, Bellassoued and Yamamoto \cite{BY06} considered the inverse source problem for wave equation and give a stability inequality with observations on certain sub-boundary. Gaitan and Ouzzane \cite{GO13} proved a lipschitz stability for the inverse problem which reconstructs an absorption coefficient for a transport equation with also boundary measurements. For heat(parabolic) equation, Yamamoto \cite{Y09} have given a great survey by summarizing different types of Carleman estimates and methods for applications to some inverse problems (see also the references therein). Moreover, Choulli, Imanuvilov, Puel and Yamamoto \cite{CIPY13} has worked on the inverse source problem for linearized Navier-Stokes equations with data in arbitrary sub-domain.
\vspace{0.2cm}

To authors' best knowledge, there are few papers on Carleman estimates for MHD system. Recall that in \cite{HPS06,HPS07}, the authors have proved a Carleman estimate for the adjoint MHD system in order to prove the exact controllability. However, in their Carleman estimate, the observation of the first spatial derivative of external force $F$ is needed which makes it difficult to consider inverse source problems in general case and thus it is even not suitable for inverse coefficient problems. In this article, we intend to establish Carleman estimates for the above MHD system and then give the stability inequality for the principal coefficients.

By taking the difference of two states for MHD systems with different coefficients, it is enough to consider an inverse source problem for a linearized MHD system. The main difficulty lies in the first-order partial differential term in the source. We use the idea of \cite{Y09} in which the author dealt with a similar problem for equation of parabolic type by giving a Carleman estimate for a first-order partial differential operator. In this article, we modified the Carleman estimate for first-order partial differential operator in a vector-valued case. Then together with Carleman estimate for MHD system, we prove a Lipschitz stability for inverse coefficients problem and also a conditional stability of H\"{o}lder type under weaker assumptions. 
\vspace{0.2cm}

This article is organized as follows. In section 2, we introduce some notations and then give the concerned MHD system and precise statements for our inverse coefficients problem. In section 3, we establish Carleman inequalities both for direct problem and inverse problem. For direct problem, we need a Carleman estimate for MHD system. On the other hand, we prove the inequality for inverse problem in terms of a Carleman estimate for a first-order partial differential operator. In section 4, we complete the proof of the main results in section 2 by using the above Carleman inequalities.

\section{Notations and stability results}
\label{sec-main}

Let $\Om\subset\R^3$ be a bounded domain with smooth boundary. We set $Q:=\Om\times(0,T)$, $\Sigma :=\pa\Om\times(0,T)$.
In this article, we use the following notations. $\cdot^T$ denotes the transpose of matrices or vectors. Let $\pa_t = \f{\pa}{\pa t}, \; \pa_j = \f{\pa}{\pa x_j}, j = 1,2,3, \; \De = \sum_{j=1}^3 \pa_j^2, \; \na = (\pa_1, \pa_2, \pa_3)^T, \; \na_{x,t} = (\na,\pa_t)^T $

\begin{equation*}
(w \cdot \na) v = \left(\sum_{j=1}^3 w_j\pa_j v_1, \sum_{j=1}^3 w_j\pa_j v_2, \sum_{j=1}^3 w_j\pa_j v_3 \right)^T,
\end{equation*}
for $v = (v_1,v_2,v_3)^T$ and $w = (w_1,w_2,w_3)^T$. Henceforth let $n$ be the outward unit normal vector to $\pa\Om$ and let $\pa_n u:=\f{\pa u}{\pa n} = \na u \cdot n$. Moreover let $\ga = (\ga_1,\ga_2,\ga_3) \in (\Bbb N \cup \{0\})^3, \; \pa_x^{\ga} = \pa_1^{\ga_1} \pa_2^{\ga_2} \pa_3^{\ga_3}$ and $\vert \ga \vert = \ga_1 + \ga_2 +\ga_3$.

Furthermore, we introduce the following spaces:
\begin{equation*}
\left\{
\begin{aligned}
&W^{k,\infty}(D) :=\{w; \; \pa_t^{\gamma} w,\pa_x^{\gamma} w \in L^\infty (D), |\gamma|\le k \}, \ k\in \Bbb{N} \\
&H^{k,l}(D) :=\{w; \; \pa_t^{\gamma_0} w, \pa_x^{\gamma} w \in L^2(D), \; |\gamma_0|\le l, |\gamma|\le k \}, \ k,l\in \Bbb{N}\cup \{ 0 \}
\end{aligned}
\right.
\end{equation*}
for any sub-domain $D\subset Q$.
If there is no confusion, we also denote $(L^2(\Om))^3$ by $L^2(\Om)$, likewise $(H^{k,l}(D))^3$ by simply $H^{k,l}(D)$, $k,l\in \Bbb{N}\cup \{ 0 \}$.
\vspace{0.2cm}

In this article, we denote $\kappa = \sigma^{-1}$ the resistance. For simplicity, we just assume the magnetic permeability $\mu$ to be a constant(identically $1$). In fact, we consider the following MHD system:
\begin{equation}
\label{sy:MHD}
\left\{
\begin{aligned}
& \ \pa_t u - \mathrm{div}(2\nu\mathcal{E}(u)) + (u\cdot\nabla) u - (H\cdot\nabla) H + \na (p - \frac{1}{2} |H|^2)= 0, \\
& \ \pa_t H + \mathrm{rot}(\kappa\mathrm{rot}\; H) + (u\cdot\nabla) H - (H\cdot\nabla) u= 0, \\
& \ \mathrm{div}\; u = \mathrm{div}\; H = 0.
\end{aligned}
\right.
\end{equation}
Here, the viscosity $\nu=\nu(x)$ and the resistance $\kappa=\kappa(x)$ are time independent coefficients which admit a positive lower bound. Now we let $(u_i, p_i, H_i)$(i=1,2) are two sets of functions satisfying (\ref{sy:MHD}) corresponding to coefficients $(\nu_i, \kappa_i)$(i=1,2). That is,
\begin{equation}
\label{sy:MHD2}
\left\{
\begin{aligned}
&\pa_t u_i - \mathrm{div}(2\nu_i\mathcal{E}(u_i)) + (u_i\cdot\nabla) u_i - (H_i\cdot\nabla) H_i + \na p_i - \nabla H_i^T\!\cdot\! H_i = 0 &\quad in \ Q, \\
&\pa_t H_i + \mathrm{rot}(\kappa_i\mathrm{rot}\; H_i) + (u_i\cdot\nabla) H_i - (H_i\cdot\nabla) u_i = 0 &\quad in \ Q, \\
& \mathrm{div}\; u_i = 0, \quad \mathrm{div}\; H_i = 0 &\quad in \ Q.
\end{aligned}
\right.
\end{equation}
The sets of functions $(u_i, p_i, H_i, \nu_i,\kappa_i)$(i=1,2) are supposed to be smooth enough (e.g. $W^{2,\infty}(Q)$). Then we choose a function $d\in C^2(\ov{\Omega})$ such that
\begin{equation}
\label{con:d}
d>0 \ in \ \Om, \quad |\na d| > 0 \ on \ \ov{\Om}, \quad d=0 \ on \ \pa\Om\setminus\Gamma
\end{equation}
for any nonempty sub-boundary $\Gamma\subset\partial\Omega$. The existence of such function was proved in \cite{Y09}. In fact, we can choose a bounded domain $\Om_1$ with boundary smooth enough such that
\begin{equation}
\label{con:choice-Om1}
\Om \subsetneqq \Om_1, \quad \ov{\Gamma} = \ov{\pa\Om\cap\Om_1}, \quad \pa\Om\setminus\Gamma \subset \pa\Om_1,
\end{equation}
thus $\Om_1\setminus\ov{\Om}$ contains some non-empty open subset. It is a well-known result (see Imanuvilov, Puel and Yamamoto \cite{IPY09}, Fursikov and Imanuvilov \cite{FI96}) that there exists a function $\eta\in C^2(\ov{\Om})$ such that for any $\om\subset\subset \Om$,
$$
\eta |_{\pa\Om} = 0, \quad \eta >0 \ in \ \Om, \quad |\na \eta|>0, \ on \ \ov{\Om\setminus \om}.
$$
By choosing $\ov{\om}\subset \Om_1\setminus \ov{\Om}$ and applying the above result in $\Om_1$, we obtain our function $d$. Without special emphases, we use the function $d$ as above throughout this article.

Fix observation time $t_0\in (0,T)$. Before giving our stability result, we need furthermore the following two assumptions: 
\vspace{0.2cm}

(A1)$\quad \mathrm{det}\;\mathcal{E}(u_1(x,t_0)) \neq 0 \quad\quad for \ any\ x\in \ov{\Om}$, 

(A2)$\quad |\na d(x)\times\mathrm{rot}H_1(x,t_0)| \neq 0 \quad\quad for \ any\ x\in \ov{\Om}$.
\vspace{0.2cm}

\noindent Now we are ready to state our main result. $\Gamma\subset \pa\Om$ is an arbitrarily fixed relatively open sub-boundary. 
\begin{thm}
\label{thm:stability}
Under the assumptions (A1)-(A2) and the conditions
\begin{equation}
\label{con:bdy}
\nu_1(x) = \nu_2(x) \quad on\ \Gamma, \qquad \kappa_1(x) = \kappa_2(x),\ \na\kappa_1(x) = \na\kappa_2(x) \quad on\ \partial\Omega, 
\end{equation} 
there exists a constant $C>0$ such that
$$
\|\nu_1-\nu_2\|_{H^1(\Om)} + \|\kappa_1-\kappa_2\|_{H^1(\Om)} \le C\mathcal{D}
$$
for all $(u_i,p_i,H_i)\in H^{2,3}(Q)\times H^{1,2}(Q)\times H^{2,3}(Q)$ satisfying system (\ref{sy:MHD2}) for $i=1,2$.
\end{thm}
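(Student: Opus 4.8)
\ The plan is to follow the Bukhgeim--Klibanov method: reduce the inverse coefficients problem to an inverse source problem for a linearized system, and then combine the Carleman estimate for the MHD system with the Carleman estimate for a first-order operator, both established in Section 3. First I would subtract the two copies of (\ref{sy:MHD2}). Setting $y=u_1-u_2$, $z=H_1-H_2$, $q=p_1-p_2$, together with the coefficient differences $f=\nu_1-\nu_2$ and $g=\kappa_1-\kappa_2$, a direct computation shows that $(y,z,q)$ solves a linearized MHD system whose leading source terms are
\[
R_1=\mathrm{div}(2f\,\mathcal{E}(u_1))=2\mathcal{E}(u_1)\na f+2f\,\mathrm{div}\,\mathcal{E}(u_1),\qquad
R_2=-\mathrm{rot}(g\,\mathrm{rot}\,H_1)=-\na g\times\mathrm{rot}\,H_1-g\,\mathrm{rot}\,\mathrm{rot}\,H_1,
\]
up to terms linear in $(y,z,\na y,\na z)$ with smooth bounded coefficients (coming from the quadratic terms $(u\cdot\na)u$, $(H\cdot\na)H$, $\mathrm{rot}(\kappa\,\mathrm{rot}\,H)$, etc.) and a pressure gradient, the latter being absorbed inside the MHD Carleman estimate. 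The unknowns to be controlled, $\na f$ and $\na g$, appear as the principal parts of $R_1$ and $R_2$.

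Next, since $\nu$ and $\kappa$ are time-independent, I would evaluate the difference system at the observation time $t=t_0$ to read off the coefficient gradients. From the first equation, $2\mathcal{E}(u_1(\cdot,t_0))\na f$ is expressed through $\pa_t y(\cdot,t_0)$ and the elliptic part applied to $y(\cdot,t_0)$; by (A1) the matrix $\mathcal{E}(u_1(\cdot,t_0))$ is invertible on $\ov\Om$, so $\na f$ is recovered pointwise, with
\[
|\na f(x)|\le C\Big(|\pa_t y(x,t_0)|+\textstyle\sum_{|\ga|\le2}|\pa_x^{\ga}y(x,t_0)|+|f(x)|+(\text{terms in }z,g)\Big).
\]
The magnetic equation is more delicate: the cross product $\na g\times\mathrm{rot}\,H_1$ determines only the part of $\na g$ orthogonal to $\mathrm{rot}\,H_1$, so $\na g$ cannot be recovered algebraically. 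This is exactly where (A2) enters. Setting $r=\mathrm{rot}\,H_1(\cdot,t_0)$ and projecting $\na d$ onto the plane orthogonal to $r$ produces the field
\[
b:=\na d-\f{\na d\cdot r}{|r|^2}\,r,\qquad b\cdot r=0,\qquad b\cdot\na d=|b|^2=\f{|\na d\times r|^2}{|r|^2},
\]
which is nonzero precisely because $|\na d\times\mathrm{rot}\,H_1|\ne0$. Since $b\perp r$, the directional derivative $b\cdot\na g$ is recoverable from $R_2$, while $b\cdot\na d>0$ means that $b$ is transversal to the level sets of the Carleman weight.

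I would then close the estimate with the two Carleman inequalities. Differentiating the difference system in $t$ (the regularity $H^{2,3}(Q)$ permits this) and applying the MHD Carleman estimate bounds the weighted $L^2$ norms of $\pa_t y,\pa_t z$ and the required spatial derivatives over $Q$ by weighted norms of $\pa_t R_1,\pa_t R_2$ plus observations on $\Gamma$; a fundamental-theorem-of-calculus argument in $t$ (using that the weight vanishes as $t\to0,T$) transfers these to the slice $t=t_0$, yielding control of $\int_\Om e^{2s\vp(\cdot,t_0)}|\pa_t y(\cdot,t_0)|^2\,\d x$ and the analogous spatial terms. Inserting the pointwise bound above gives $\int_\Om e^{2s\vp(\cdot,t_0)}|\na f|^2$, and the first-order-operator Carleman estimate along $\na d$ (nondegenerate since $|\na d|>0$ on $\ov\Om$) upgrades this to $\int_\Om e^{2s\vp(\cdot,t_0)}|f|^2$, the boundary terms being killed by $f=0$ on $\Gamma$ from (\ref{con:bdy}). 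For $g$, I would apply the vector-valued first-order Carleman estimate along $b$ to bound $\int_\Om e^{2s\vp(\cdot,t_0)}(|g|^2+|\na g|^2)$ by $\int_\Om e^{2s\vp(\cdot,t_0)}|b\cdot\na g|^2$, the boundary terms vanishing since $g=0$ and $\na g=0$ on $\pa\Om$. Choosing the parameter $s$ large enough absorbs all the lower-order and self-referential $|f|^2,|g|^2$ contributions into the left-hand sides; removing the bounded weight then gives $\|f\|_{H^1(\Om)}+\|g\|_{H^1(\Om)}\le C\mathcal{D}$.

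The step I expect to be the main obstacle is the treatment of $g$: because the source sees $\na g$ only through the cross product with $\mathrm{rot}\,H_1$, no algebraic inversion is available, and one must build the first-order Carleman estimate in a genuinely vector-valued form along the tailored direction $b$, with (A2) guaranteeing its non-degeneracy. A secondary difficulty is the coupling between the velocity and magnetic equations, which forces the $z,g$ contributions in the $f$-estimate and the $y,f$ contributions in the $g$-estimate to be absorbed simultaneously, requiring the two Carleman estimates to be combined with a single, sufficiently large choice of $s$.
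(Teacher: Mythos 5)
Your overall strategy coincides with the paper's: reduce to an inverse source problem for the linearized system, apply the first--order operator Carleman estimates (Theorem \ref{thm:CEIP-sin}) at $t=t_0$ with $A=2\mathcal{E}(u_1(\cdot,t_0))$, $b=\mathrm{rot}\,H_1(\cdot,t_0)$, transfer the resulting time-derivative traces at $t=t_0$ to integrals over $Q$ by the fundamental-theorem-of-calculus trick (possible because the singular weight vanishes at $t=0$), and close by applying Theorem \ref{thm:CEDP} to the difference system and its first and second time derivatives, absorbing $\int_Q(|\nu|^2+|\na\nu|^2+|\kappa|^2+|\na\kappa|^2)e^{2s\al}$ for $s$ large via $\al(x,t)\le\al(x,t_0)$. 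Your treatment of $\nu$ (pointwise inversion of $\mathcal{E}(u_1(\cdot,t_0))$ by (A1), then a transport Carleman estimate along $\na d$) is essentially the paper's proof of \eqref{CEIP1} unpacked, and is fine.

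However, there is a genuine gap in your treatment of $\kappa$. You claim that the Carleman estimate along the projected field $b=\na d-\f{\na d\cdot r}{|r|^2}r$ bounds
$\int_\Om (|g|^2+|\na g|^2)e^{2s\vp(\cdot,t_0)}\,dx$ by $\int_\Om |b\cdot\na g|^2e^{2s\vp(\cdot,t_0)}\,dx$ plus vanishing boundary terms. No such estimate can hold: a single directional derivative cannot dominate the full gradient. For instance, with $b=e_1$ and $g(x)=\phi(x_1)\sin(Nx_2)\psi(x_2)$ (bump functions $\phi,\psi$), $\|b\cdot\na g\|_{L^2}$ stays bounded while $\|\na g\|_{L^2}\sim N$, and the weights do not repair this since they are independent of $N$. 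Your projection step also strictly loses information: the source $R_2$ determines the whole orthogonal projection of $\na g$ onto $r^\perp$, yet even using all of it the component $r\cdot\na g$ is not algebraically recoverable, so the $H^1(\Om)$ bound on $\kappa_1-\kappa_2$ demanded by the theorem is out of reach along your route. The paper's estimate \eqref{CEIP2} resolves exactly this point: $\na g$ is controlled only at the price of $\na(Qg)$ appearing on the right-hand side, obtained by differentiating the first-order identity componentwise, $Q_kg_k=\pa_k(Qg)-\na g\times\pa_k b-g\,\mathrm{rot}(\pa_k b)$, and applying the zeroth-order estimate to each $g_k=\pa_k g$. This is precisely why the measurement $\mathcal{D}$ contains $\|(H_1-H_2)(\cdot,t_0)\|_{H^3(\Om)}$ and why the proof must handle the extra term $\f{1}{s^2\la^2\vp^2}|\na(\mathrm{rot}(\kappa\,\mathrm{rot}H_1))|^2$, leading through \eqref{eq10} to weighted bounds on $\na(\pa_t H)(\cdot,t_0)$ and $\na(\pa_t^2H)$ over $Q$. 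Your proposal never generates these derivative-of-source terms, so it cannot be completed as stated; the fix is to differentiate the relation $R_2=-\mathrm{rot}(g\,\mathrm{rot}H_1)$ in $x$ and accept the corresponding higher-order data, exactly as the paper does.
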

\noindent Here the measurement $\mathcal{D}$ denotes
\begin{align*}
&\mathcal{D}=  \|(u_1-u_2)(\cdot,t_0)\|_{H^2(\Om)} + \|(H_1-H_2)(\cdot,t_0)\|_{H^3(\Om)} + \|\na (p_1-p_2)(\cdot,t_0)\|_{L^2(\Om)} \\
&\hspace{0.8cm} +  \|u_1-u_2\|_{H^{0,2}(\Sigma)} + \|\na_{x,t} (u_1-u_2)\|_{H^{0,2}(\Sigma)} + \|p_1-p_2\|_{H^{\frac{1}{2},2}(\Sigma)} \\
&\hspace{0.8cm} + \|H_1-H_2\|_{H^{0,2}(\Sigma)} + \|\na_{x,t} (H_1-H_2)\|_{H^{0,2}(\Sigma)} .
\end{align*}
$H^{k,l}(\Sigma)\equiv H^{k}(0,T;H^{l}(\partial\Om))$($k,l\in\Bbb{N}$). The assumption (A1)-(A2) are strong because we need them to hold globally. Now consider the following weaker assumptions:
 \vspace{0.2cm}

(A1$^\prime$)$\quad \mathrm{det}\;\mathcal{E}(u_1(x,t_0)) \neq 0 \quad\quad for \ any\ x\in \ov{\Om_{3\ep}}$, 

(A2$^\prime$)$\quad |\na d(x)\times\mathrm{rot}H_1(x,t_0)| \neq 0 \quad\quad for \ any\ x\in \ov{\Om_{3\ep}}$
\vspace{0.2cm}

\noindent where $\Om_\ep := \{x\in \Om: d(x)>\ep\}$ for any $\ep>0$. Then we can derive a local stability result.
\begin{thm}
\label{thm:stabilityn}
Under the assumptions {\text(A1$^\prime$)-(A2$^\prime$)} and the conditions
\begin{equation}
\label{con:bdyn}
\nu_1(x) = \nu_2(x) \quad on\ \Gamma, \qquad \kappa_1(x) = \kappa_2(x),\ \na\kappa_1(x) = \na\kappa_2(x) \quad on\ \Gamma, 
\end{equation} 
there exist constants $C>0$ and $\theta\in (0,1)$ such that
\begin{align}
\label{eq:loc-stab}
\|\nu_1-\nu_2\|_{H^1(\Om_{5\ep})} + \|\kappa_1-\kappa_2\|_{H^1(\Om_{5\ep})} \le C(\mathcal{D} + M^{1-\theta}\mathcal{D}^{\theta})
\end{align}
for all $(u_i,p_i,H_i)\in H^{2,3}(Q)\times H^{1,2}(Q)\times H^{2,3}(Q)$ satisfying system (\ref{sy:MHD2}) for $i=1,2$.
\end{thm}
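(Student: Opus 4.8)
The plan is to rerun the Bukhgeim--Klibanov/Carleman argument behind Theorem \ref{thm:stability}, but against a spatial cutoff built from the level sets of $d$, so that neither the failure of (A1)--(A2) outside $\ov{\Om_{3\ep}}$ nor the absence of boundary data for $\kappa$ on $\pa\Om\setminus\Ga$ is ever invoked. The cutoff costs a factor of the Carleman weight, and optimizing in the large parameter $s$ converts a Lipschitz-type weighted bound into the H\"older bound (\ref{eq:loc-stab}).

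First I would set $y=u_1-u_2$, $b=H_1-H_2$, $f=\nu_1-\nu_2$, $g=\kappa_1-\kappa_2$ and subtract the two copies of (\ref{sy:MHD2}). This produces a linearized system for $(y,b)$ whose source contains $\div(2f\,\mathcal{E}(u_1))$ and $\rot(g\,\rot H_1)$ together with transport and coupling terms whose coefficients are built from $(u_i,H_i)$. Exactly as for Theorem \ref{thm:stability}, differentiating in $t$, freezing at $t=t_0$, and feeding the result into the MHD Carleman estimate and the vector-valued first-order Carleman estimate of Section 3 reduces the recovery of $(f,\na f)$ and $(g,\na g)$ to a weighted energy inequality. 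Here (A1$^\prime$)--(A2$^\prime$) are exactly the nondegeneracy needed to solve pointwise on $\ov{\Om_{3\ep}}$ for $(f,\na f)$ and $(g,\na g)$ from the source coefficients $\mathcal{E}(u_1(\cdot,t_0))$ and $\na d\times\rot H_1(\cdot,t_0)$.

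Then I introduce a cutoff $\chi=\chi(d(x))\in C^2(\ov\Om)$ with $\chi\equiv1$ on $\{d\ge 4\ep\}$ and $\chi\equiv0$ on $\{d\le 3\ep\}$, so that $\mathrm{supp}\,\chi\subset\ov{\Om_{3\ep}}$ and $\na\chi=\chi'(d)\na d$ is supported in $\{3\ep\le d\le 4\ep\}$. Since $d=0$ on $\pa\Om\setminus\Ga$ while $d>0$ on $\Ga$, the cutoff vanishes on $\pa\Om\setminus\Ga$ but equals $1$ on $\Ga\cap\{d>4\ep\}$; hence the boundary integrals generated by integration by parts survive only over $\Ga$, and only the data on $\Ga$ required in (\ref{con:bdyn}) is used. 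Applying the Carleman machinery to $\chi y,\chi b,\dots$, every term carrying a derivative of $\chi$ is of lower order and lives in $\{3\ep\le d\le 4\ep\}$, where the weight $\e^{2s\vp}$ is at most $\e^{2s\vp_{4\ep}}$ with $\vp$ increasing in $d$ and $\vp_{4\ep}:=\sup_{3\ep\le d\le4\ep}\vp$. Bounding these commutators by the a priori quantity $M$ and the genuine data by $\e^{Cs}\mathcal D^2$, while on $\Om_{5\ep}\subset\{d\ge4\ep\}$ one has $\chi\equiv1$ and $\vp\ge\vp_{5\ep}:=\inf_{\Om_{5\ep}}\vp>\vp_{4\ep}$, I obtain
$$
\|f\|_{H^1(\Om_{5\ep})}^2+\|g\|_{H^1(\Om_{5\ep})}^2\le C\big(\e^{As}\mathcal D^2+\e^{-Bs}M^2\big),\qquad s\ge s_0,
$$
with $A:=2(\vp_{\max}-\vp_{5\ep})\ge0$, $\vp_{\max}:=\max_{\ov\Om}\vp$, and $B:=2(\vp_{5\ep}-\vp_{4\ep})>0$.

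Finally I optimize in $s$. When $\mathcal D$ is small relative to $M$, the choice $s=\tfrac{1}{A+B}\log(M^2/\mathcal D^2)\ge s_0$ balances the two terms and yields $\|f\|_{H^1(\Om_{5\ep})}+\|g\|_{H^1(\Om_{5\ep})}\le C\,M^{1-\te}\mathcal D^{\te}$ with $\te:=B/(A+B)\in(0,1)$; when $\mathcal D$ is not small the optimal $s$ falls below $s_0$, which forces $M\le C\mathcal D$ and, taking $s=s_0$, gives the bound $C\mathcal D$. Superposing the two cases produces (\ref{eq:loc-stab}). I expect the main obstacle to be the bookkeeping of the cutoff commutators through the first-order Carleman estimate: one must verify that each $\na\chi$-term is genuinely of lower order, is supported in $\{3\ep\le d\le4\ep\}$ so that it can be charged against $\e^{-Bs}M^2$ rather than contaminating the leading weighted terms in $(f,\na f,g,\na g)$, and that the spatial localization is threaded consistently through both the MHD and the first-order estimates.
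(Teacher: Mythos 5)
Your overall strategy is the same as the paper's: take differences, differentiate in $t$, localize with cutoffs built from level sets of $d$ so that only data on $\Gamma$ is used, bound the cutoff commutators by $M$ against a strictly smaller value of the weight, and optimize in $s$ to trade the Lipschitz bound for a H\"older one. The level-set geometry ($\Om_{3\ep}$, $\Om_{4\ep}$, $\Om_{5\ep}$), the role of (A1$'$)--(A2$'$), and the final two-case optimization are all as in the paper. However, there is a genuine gap at the central step, "applying the Carleman machinery to $\chi y,\chi b,\dots$" with a purely spatial cutoff $\chi=\chi(d(x))$: such a cutoff destroys the structural hypotheses of \emph{both} direct-problem Carleman estimates available. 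Cutting off the velocity breaks incompressibility, $\mathrm{div}(\chi y)=\na\chi\cdot y\neq 0$, so the singular-weight estimate (Theorem \ref{thm:CEDP}) is inapplicable --- its pressure step applies $\mathrm{div}$ to the momentum equation and uses $\mathrm{div}\,u=0$ together with (\ref{con:wk-div-zero}) to obtain $\De p=\mathrm{div}(\dots)$, and it has no source term that can absorb a nonzero divergence. The paper's remedy is a \emph{new} estimate, Theorem \ref{thm:CEDPn} for system (\ref{sy:MHD4n}), which tolerates $\mathrm{div}\,u=h$ at the price of $\int_Q s\vp|\na_{x,t}h|^2e^{2s\vp}$; but that estimate uses the regular weight and therefore \emph{requires} the zero initial/final conditions (\ref{con:0,T}), which a time-independent cutoff cannot produce, since $\chi y$ does not vanish at $t=0,T$.

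This is exactly why the paper's proof is structured the way it is: the cutoff applied to the solutions is the \emph{space-time} cutoff $\chi_2$, a function of $\psi(x,t)=d(x)-\be(t-t_0)^2+c_0$, with $\be=\|d\|_{C(\ov{\Om_1})}/\de^2$ tuned so that the level set $Q_\ep$ (hence $\mathrm{supp}\,\chi_2$) stays away from $\Om\times\{0,T\}$; then $\wt u=\chi_2 u$ both satisfies (\ref{con:0,T}) and has controlled divergence $\na\chi_2\cdot u$, so Theorem \ref{thm:CEDPn} applies. A separate, purely spatial cutoff $\chi_1$ is applied only to the coefficients $\nu,\kappa$ in the first-order estimates (Theorem \ref{thm:CEIP-reg}), where no divergence or time-boundary issue arises, and an additional temporal cutoff $\eta$ is needed in the energy step $\int_\Om(\cdot)|_{t=t_0}=\int_0^{t_0}\pa_t(\cdot)\,dt$, because the regular weight, unlike the singular one, does not vanish at $t=0$. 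Your proposal flags "bookkeeping of the cutoff commutators" as the main obstacle, but the real obstacles are these two structural incompatibilities (divergence-freeness and the $t=0,T$ conditions); without introducing a divergence-tolerant Carleman estimate and a time-dependent cutoff supported away from $t=0,T$ (or some equivalent device), the argument as written cannot be carried out.
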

\noindent Here a prior bound $M$ and measurements $\mathcal{D}$ denote
\begin{align*}
&M = \sum_{j=0}^2\Big(\|\pa_t^j u\|_{H^{1,1}(Q)} + \|\pa_t^j H\|_{H^{1,0}(Q)} + \|\pa_t^j p\|_{L^2(Q)}\Big) + \|\nu\|_{H^1(\Om_{3\ep})} + \|\kappa\|_{H^1(\Om_{3\ep})}, \\
&\mathcal{D} =  \|(u_1-u_2)(\cdot,t_0)\|_{H^2(\Om_{3\ep})} + \|(H_1-H_2)(\cdot,t_0)\|_{H^3(\Om_{3\ep})} + \|\na (p_1-p_2)(\cdot,t_0)\|_{L^2(\Om_{3\ep})} \\
&\hspace{0.8cm} +  \|u_1-u_2\|_{H^{0,2}(\Gamma\times(0,T))} + \|\na_{x,t} (u_1-u_2)\|_{H^{0,2}(\Gamma\times(0,T))} + \|p_1-p_2\|_{H^{\frac{1}{2},2}(\Gamma\times(0,T))} \\
&\hspace{0.8cm} + \|H_1-H_2\|_{H^{0,2}(\Gamma\times(0,T))} + \|\na_{x,t} (H_1-H_2)\|_{H^{0,2}(\Gamma\times(0,T))} .
\end{align*}

In order to prove the stability results, we use the technique of Carleman estimate. In the next part, we will establish two Carleman inequalities which are the key points for the proof. 

\section{Carleman estimates}
\subsection{Carleman estimates with a singular weight function}
First of all, let's fix the weight function. Throughout this article, we use a singular weight function. Arbitrarily fix $t_0\in (0,T)$ and set $\delta := \min\{t_0, T-t_0 \}$. Let $l\in C^\infty [0,T]$ satisfy:
\begin{equation}
\label{con:choice-l}
\left\{
\begin{aligned}
&\ l(t) > 0, \qquad \qquad \quad 0 < t < T, \\
&\ l(t) =
\left\{
\begin{aligned}
&t, \qquad \qquad 0\le t\le \frac{\delta}{2}, \\
&T - t, \qquad T - \frac{\delta}{2}\le t\le T,
\end{aligned}
\right. \\
&\ l(t_0) > l(t), \quad \quad \qquad \forall t\in (0,T)\setminus \{ t_0\}.
\end{aligned}
\right.
\end{equation}
Then we can choose $e^{2s\alpha}$ as our weight function where
\begin{equation}
\label{con:choice-alpha-phi}
\vp(x,t) = \f{e^{\la d(x)}}{l(t)}, \quad
\al(x,t) = \f{e^{\la d(x)}-e^{2\la\|d\|_{C(\ov{\Om})}}}{l(t)}.
\end{equation}
This is called a singular weight because $\alpha$ tends to $-\infty$ as $t$ goes to $0$ and $T$. Thus, the weight is close to $0$ near $t=0,T$. 

Now we establish two key Carleman inequalities. The first one is for direct problem. We consider the following linearized MHD system:
\begin{equation}
\label{sy:MHD4}
\left\{
\begin{aligned}
&\pa_t u - \nu\De u + (B^{(1)}\cdot\na) u + (u\cdot\na) B^{(2)} + \na(B^{(3)}\cdot u) + L_1(H) + \na p = F &\quad in \ Q, \\
&\pa_t H - \kappa\De H + (D^{(1)}\cdot\na) H + (H\cdot\na) D^{(2)} + D^{(3)}\times \mathrm{rot}\; H + L_2(u) = G &\quad in \ Q, \\
& \mathrm{div}\; u = 0, \quad \mathrm{div}\; H = 0 &\quad in \ Q.
\end{aligned}
\right.
\end{equation}
Here
$$
\begin{aligned}
&L_1(H) = (C^{(1)}\cdot\na) H + (H\cdot\na) C^{(2)} + \na (C^{(3)}\cdot H), \\
&L_2(u) = (C^{(4)}\cdot\na) u + (u\cdot\na) C^{(5)},
\end{aligned}
$$
$\nu,\kappa\in W^{1,\infty}(Q)$ admit a positive lower bound and the coefficients $B^{(k)},C^{(k)},D^{(k)}$, $k\in \Bbb{N}$ are assumed to have enough regularity (e.g. $W^{2,\infty}(Q)$). For simplicity, we define
$$
\begin{aligned}
\|(u,p,H)\|_{\chi_s(Q)}^2 := \int_Q \bigg\{ &\frac{1}{s^2\vp^2}\bigg( |\pa_t u|^2 + \sum_{i,j=1}^3 |\pa_i \pa_j u|^2\bigg) + |\na u|^2 + s^2\vp^2|u|^2 + \frac{1}{s\vp}|\na p|^2 + s\vp|p|^2 \\
& + \frac{1}{s^2\vp^2}\bigg( |\pa_t H|^2 + \sum_{i,j=1}^3 |\pa_i \pa_j H|^2\bigg) + |\na H|^2 + s^2\vp^2|H|^2 \bigg\}e^{2s\alpha}dxdt.
\end{aligned}
$$
In the proof, we have further assumption that
\begin{equation}
\label{con:wk-div-zero}
\mathrm{div}\;\pa_t u = 0, \quad \mathrm{div}\;\Delta u = 0 \quad in \ Q.
\end{equation}
\noindent Condition (\ref{con:wk-div-zero}) should be true at least in the weak sense. In fact, if we have higher regularity of source terms $F$ and $G$, then we have improved regularity of the solution $u$. In that case, (\ref{con:wk-div-zero}) holds automatically after the condition $\mathrm{div}\;u = 0, \ in \ Q$.
\vspace{0.2cm}

Then the first Carleman estimate can be stated as:
\begin{thm}
\label{thm:CEDP}
Let $d\in C^2(\ov{\Om})$ satisfy (\ref{con:d}) and $F,G\in L^2(Q)$. Then for large fixed $\lambda$, there exist constants $s_0>0$ and $C>0$ such that
\begin{equation}
\begin{aligned}
\|&(u,p,H)\|_{\chi_s(Q)}^2 \le \;C\int_Q \big(|F|^2 + |G|^2\big) e^{2s\alpha}dxdt + Ce^{-s}\bigg(\|u\|_{L^2(\Sigma)}^2  + \|\na_{x,t} u\|_{L^2(\Sigma)}^2\\
&\hspace{4cm} + \|H\|_{L^2(\Sigma)}^2 + \|\na_{x,t} H\|_{L^2(\Sigma)}^2 + \|p\|_{L^2(0,T;H^{\frac{1}{2}}(\pa \Om))}^2 \bigg)
\end{aligned}
\end{equation}
for all $s\ge s_0$ and all $(u,p,H)\in H^{2,1}(Q)\times H^{1,0}(Q)\times H^{2,1}(Q)$ satisfying the system (\ref{sy:MHD4}).
\end{thm}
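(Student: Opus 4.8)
The plan is to view \eqref{sy:MHD4} as a divergence-free Stokes system for $(u,p)$ coupled to a parabolic system for $H$, and to treat every term that is of order at most one in $(u,H)$ — namely all the coefficient terms built from $B^{(k)},C^{(k)},D^{(k)}$, the couplings $L_1(H),L_2(u)$, and $D^{(3)}\times\mathrm{rot}\,H$ — as perturbations to be absorbed once $s$ is large. Concretely, I would rewrite the momentum equation as $\pa_t u-\nu\De u+\na p=\wt F$ with $\wt F:=F-(B^{(1)}\cdot\na)u-(u\cdot\na)B^{(2)}-\na(B^{(3)}\cdot u)-L_1(H)$, and the induction equation as $\pa_t H-\kappa\De H=\wt G$ with $\wt G$ collecting $G$ together with all the first-order terms in the second line of \eqref{sy:MHD4}. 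Since the coefficients lie in $W^{2,\infty}(Q)$, pointwise $|\wt F|\le |F|+C(|u|+|\na u|+|H|+|\na H|)$, and similarly for $\wt G$. The crucial structural observation is that these perturbations never involve $p$ nor any second derivative of $u$ or $H$; this is exactly what will make the absorption feasible.

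The second step is to invoke the known strong Carleman estimates, both written with the same weight $e^{2s\al}$ coming from \eqref{con:choice-alpha-phi}. For the Stokes operator under $\mathrm{div}\,u=0$ and the auxiliary conditions \eqref{con:wk-div-zero}, I would apply the estimate of Imanuvilov, Puel and Yamamoto \cite{IPY09} (see also Fursikov and Imanuvilov \cite{FI96}), which controls
$\int_Q\big(\tfrac{1}{s\vp}(|\pa_t u|^2+\sum_{i,j}|\pa_i\pa_j u|^2)+s\vp|\na u|^2+s^3\vp^3|u|^2+\tfrac{1}{s\vp}|\na p|^2+s\vp|p|^2\big)e^{2s\al}\,dxdt$
by $C\int_Q|\wt F|^2e^{2s\al}\,dxdt$ plus boundary integrals of $u$, $\na_{x,t}u$ and the trace of $p$. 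Here the conditions \eqref{con:wk-div-zero} enter to guarantee that $\pa_t u$ and $\De u$ remain divergence free (in the weak sense), which is used when the pressure is eliminated by taking the divergence of the momentum equation and solving the resulting elliptic problem for $p$; this elliptic step is also the source of the $\|p\|_{L^2(0,T;H^{1/2}(\pa\Om))}$ boundary term. For the induction equation I would apply the scalar parabolic Carleman estimate componentwise to $\pa_t H-\kappa\De H=\wt G$, obtaining the analogous strong bound for $H$.

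Finally I would add the two estimates and perform the absorption. Beyond $|F|^2+|G|^2$, the only contributions of $\wt F,\wt G$ to the right-hand side are $\int_Q(|\na u|^2+|u|^2+|\na H|^2+|H|^2)e^{2s\al}\,dxdt$; writing $|\na u|^2=\tfrac{1}{s\vp}\,(s\vp|\na u|^2)$ and using that $s\vp\ge s_0\,(\min_{\ov Q}\vp)$ is as large as we wish, each such integral is bounded by $Cs_0^{-1}$ times a term already present on the summed left-hand side, so for $s_0$ large it is absorbed. The bookkeeping of the couplings is the delicate point and is the step I expect to be the main obstacle: the term $|\na H|^2$ produced by $L_1(H)$ in $\wt F$ must be absorbed into the $H$-block of the left-hand side, and symmetrically $|\na u|^2$ from $L_2(u)$ into the $u$-block, so the two estimates must be summed \emph{before} any absorption is attempted — together with controlling the Stokes pressure and its boundary trace this is where the real work lies. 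The boundary integrals are disposed of by noting that on $\Sigma$ one has $\al\le -c_1<0$ and $s^k\vp^k e^{2s\al}\le Ce^{-s}$, so they collapse into the stated $Ce^{-s}(\cdots)$ term. The asserted inequality then follows because $\|\cdot\|_{\chi_s(Q)}$, whose weights are each a factor $(s\vp)^{-1}$ weaker than the strong weights above, is dominated for $s\ge s_0$ by the left-hand side just obtained.
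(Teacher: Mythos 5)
Your architecture coincides with the paper's: take the divergence of the momentum equation (this is where (\ref{con:wk-div-zero}) enters), lift the boundary trace of $p$ and apply the $H^{-1}$ elliptic Carleman estimate of Imanuvilov--Puel \cite{IP03} to the resulting homogeneous Dirichlet problem, apply scalar parabolic Carleman estimates componentwise to the $u$- and $H$-equations with $\na p$, $L_1(H)$, $L_2(u)$ moved to the right, then sum and absorb. But there is a genuine gap in the step you delegate to the literature. The ``strong'' Stokes-block estimate you invoke --- left-hand side $\int_Q\big(\frac{1}{s\vp}(|\pa_t u|^2+\sum_{i,j}|\pa_i\pa_j u|^2)+s\vp|\na u|^2+s^3\vp^3|u|^2+\frac{1}{s\vp}|\na p|^2+s\vp|p|^2\big)e^{2s\al}\,dxdt$, right-hand side $\int_Q|\wt F|^2e^{2s\al}\,dxdt$ at weight one plus boundary terms only --- is not contained in \cite{IPY09} or \cite{FI96} (both are scalar parabolic results, with no pressure), and it cannot be produced by the route you sketch. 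The obstruction is a mismatch of exactly one factor $s\vp$: the elliptic step controls the pressure gradient only as $\int_Q\frac{e^{\la d}}{s\vp}|\na p|^2e^{2s\al}\,dxdt\le C\int_Q e^{\la d}\big(|F|^2+|\na u|^2+|u|^2+|\na H|^2+|H|^2\big)e^{2s\al}\,dxdt+C(\la)e^{-s}\|p\|_{L^2(0,T;H^{1/2}(\pa\Om))}^2$, because the divergence structure of the source costs a factor $s\vp$ in the $H^{-1}$ estimate; whereas the strong parabolic estimate for $u$ would need $\int_Q|\na p|^2e^{2s\al}\,dxdt$ --- at weight one --- among its sources. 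The paper resolves this by weakening the parabolic estimates by precisely one factor of $s\vp$ (Lemma \ref{lem:Ce-sin-para}: left-hand side $\frac{e^{\la d}}{s^2\vp^2}(|\pa_t y|^2+\sum_{i,j}|\pa_i\pa_j y|^2)+\la^2e^{\la d}|\na y|^2+s^2\la^4\vp^2e^{\la d}|y|^2$, source $\frac{e^{\la d}}{s\vp}|f|^2$), so that the available pressure bound fits; this is exactly why the norm $\chi_s$ in the statement sits one factor $s\vp$ below the standard parabolic weights. The regular-weight Theorem \ref{thm:CEDPn} shows the trade-off is intrinsic to the method: there the strong weights are recovered, but only at the price of the source entering as $\int_Q s\vp(|F|^2+|G|^2)e^{2s\vp}\,dxdt$.

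The unproved estimate then breaks your absorption mechanism. With the estimates one can actually prove, the gradient terms on the left-hand side appear as $\la^2e^{\la d}|\na u|^2$ and $\la^2e^{\la d}|\na H|^2$, carrying no positive power of $s$, while the couplings $L_1(H)$, $L_2(u)$ and the pressure step produce $C\int_Q e^{\la d}\big(|u|^2+|\na u|^2+|H|^2+|\na H|^2\big)e^{2s\al}\,dxdt$ on the right. Your device $|\na u|^2=\frac{1}{s\vp}\,(s\vp|\na u|^2)$ is unavailable, and letting $s\to\infty$ gains nothing; the absorption must instead be done by fixing $\la$ large, using the explicit factor $\la^2$ on the left --- which is why the statement reads ``for large fixed $\la$'' and why the paper tracks powers of $\la$ (namely $\la^2e^{\la d}$ and $s^2\la^4\vp^2e^{\la d}$) through all three intermediate estimates before fixing $\la$ in the final step. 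Your observation that the two blocks must be summed before absorbing (so that $|\na H|^2$ coming from $L_1$ lands in the $H$-block and $|\na u|^2$ coming from $L_2$ in the $u$-block) is correct and is also what the paper does; the flaw is solely that the absorption parameter is $\la$, not $s$, and that the strong Stokes estimate your argument rests on is not established.
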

\noindent $\mathbf{Remarks.}$ (\romannumeral1) There is a confusion for $\|p\|_{L^2(\Om)}$ because $p$ can be changed up to a constant. Therefore, in this article, we actually mean $\inf_{c\in\Bbb{R}}\|p+c\|_{L^2(\Om)}$ while we just write $\|p\|_{L^2(\Om)}$.

(\romannumeral2) In this article, $C$ usually denotes generic positive constant which depends on $T,\Om$ and the coefficients but is independent of large parameter $s$ and $\la$ as well. However, $\lambda$ plays an important role in the proof of Carleman estimate. And so while the generic constant $C$ depends on $\la$, we use notation $C(\la)$ to indicate the dependence.
\vspace{0.2cm}

We prove Theorem \ref{thm:CEDP} by some techniques and combinations of Carleman estimates. Our key point is the estimate of pressure $p$. Thanks to the paper of $H^{-1}$- Carleman estimate for elliptic type (see Imanuvilov and Puel \cite{IP03}), we are able to establish the Carleman estimate with boundary data by a simple extension.

\begin{proof}[Proof of Theorem \ref{thm:CEDP}]
We divide the proof into three steps.
\vspace{0.2cm}

\noindent $\mathbf{First\ step.}$ We prove a Carleman estimate for pressure $p$ with boundary data.

We shall use the following lemma.
\begin{lem}
\label{lem:Ce-H1-zero-bdy}
Let $d\in C^2(\ov{\Om})$ be chosen as (\ref{con:d}) and $y\in H^1(\Om)$ satisfy
$$
\left\{
\begin{aligned}
&\ \De y + \sum_{j=1}^3 b_j(x) \pa_j y = f_0 + \sum_{j=1}^3\pa_j f_j &\quad in \ \Om, \\
&\ y = 0 &\quad on \ \pa\Om
\end{aligned}
\right.
$$
with $f_0,f_j\in L^2(\Om)$ and $b_j\in L^{\infty}(\Om)$, $j=1,2,3$. Then there exist constants $\la_0 \ge 1$, $s_0 \ge 1$ and $C>0$ such that
\begin{equation}
\label{eq:Ce-ellip-zero-bdy}
\begin{aligned}
\int_\Om \big( |\na y|^2 &+ s^2\la^2e^{2\la d}|y|^2\big) e^{2se^{\la d}}dx \\
\le \; &C\bigg (\int_\Om \f{1}{s\la^2}e^{-\la d}|f_0|^2e^{2se^{\la d}}dx + \sum_{j=1}^3 \int_\Om se^{\la d}|f_j|^2e^{2se^{\la d}}dx \bigg)
\end{aligned}
\end{equation}
for all $\la \ge \la_0$ and $s\ge s_0$.
\end{lem}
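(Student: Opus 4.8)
The task is to prove an $H^1$-Carleman estimate for an elliptic Dirichlet problem with a right-hand side split as $f_0 + \sum_j \partial_j f_j$ (an $H^{-1}$-type source), where the weight is the purely spatial $e^{2se^{\lambda d}}$ with $d$ satisfying the standard conditions. The author explicitly credits Imanuvilov–Puel [IP03] for this $H^{-1}$-Carleman estimate for elliptic operators, so the natural plan is to reduce the stated inequality to that known result.
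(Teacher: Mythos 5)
Your proposal names the right external tool (the $H^{-1}$-Carleman estimate of Imanuvilov--Puel \cite{IP03}, which is indeed what the paper invokes), but it stops at the statement of intent: ``reduce the stated inequality to that known result.'' The reduction is precisely where the content of the lemma lies, and it is not routine. The estimate of \cite{IP03} (Theorem A.1), like every global Carleman estimate built on a Fursikov--Imanuvilov weight, carries on its right-hand side an observation term over an interior subdomain $\om\subset\subset\Om$ (the set where the gradient of the weight is allowed to degenerate), and in general also boundary contributions on the part of $\pa\Om$ where $\pa_n d\ge 0$ --- here the sub-boundary $\Gamma$, since the function $d$ of (\ref{con:d}) vanishes on $\pa\Om\setminus\Gamma$ but is positive near $\Gamma$. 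The conclusion (\ref{eq:Ce-ellip-zero-bdy}) contains neither an interior observation nor any boundary term, so a direct application of the cited theorem in $\Om$ does not produce it, and nothing in your proposal explains how those terms are eliminated; the zero Dirichlet condition on $y$ alone does not remove an observation of $y$ over an interior set $\om$.

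The paper's proof supplies exactly this missing mechanism. It extends $y$, $f_0$, $f_j$ by zero to the enlarged domain $\Om_1$ of (\ref{con:choice-Om1}); by the construction of $d$, the critical set $\om$ of the weight satisfies $\ov{\om}\subset\Om_1\setminus\ov{\Om}$, and $d$ is the restriction of a function $\eta$ vanishing on all of $\pa\Om_1$, so that $\pa_n\eta\le 0$ there. Applying the $H^{-1}$-estimate in $\Om_1$: the interior observation term vanishes because the extended $y$ is identically zero on $\om$; the boundary terms are harmless because of the sign of $\pa_n\eta$ on $\pa\Om_1$ together with $y=0$ on $\pa\Om_1$; and since $f_0,f_j$ also vanish outside $\Om$, the remaining integrals collapse back to $\Om$, yielding (\ref{eq:Ce-ellip-zero-bdy}). (A careful write-up should also justify that the zero extension of $y\in H^1(\Om)$ with $y=0$ on $\pa\Om$ still solves the equation weakly in $\Om_1$.) As it stands, your proposal identifies the ingredient but omits the argument, and carried out naively inside $\Om$ it would leave uncontrolled observation terms that the lemma does not allow.
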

\begin{proof}[Proof of Lemma \ref{lem:Ce-H1-zero-bdy}]

We use the same technique as we choose the function $d$ and apply an $H^{\!-1}$ -Carleman estimate for elliptic type.

We take the zero extensions of $y,f_0,f_j,j=1,2,3$ to $\Om_1$ and denote them by the same letters. Here $\Om_1$ is chosen as that in (\ref{con:choice-Om1}). Thus we have
\begin{equation}
\label{sy:prf-ellip-zero-bdy}
\begin{aligned}
\Delta y + \sum_{j=1}^3 b_j(x) \pa_j y = f_0 + \sum_{j=1}^3\pa_j f_j \quad in \ \Om_1, \qquad y = 0 \quad on \ \pa\Om_1.
\end{aligned}
\end{equation}
Note that the function $d$ is chosen as (\ref{con:d}). We apply an $H^{\!-1}$- Carleman estimate (see Theorem A.1 of \cite{IP03}) to (\ref{sy:prf-ellip-zero-bdy}) to obtain
$$
\begin{aligned}
\int_{\Om_1} \big( |\na y|^2 &+ s^2\la^2e^{2\la d}|y|^2\big) e^{2se^{\la d}}dx \\
\le \; &C\bigg (\int_{\Om_1} \f{1}{s\la^2}e^{-\la d}|f_0|^2e^{2se^{\la d}}dx + \sum_{j=1}^3 \int_{\Om_1} se^{\la d}|f_j|^2e^{2se^{\la d}}dx \bigg)
\end{aligned}
$$
for all $\la\ge \la_0$ and $s\ge s_0$. In $H^{\!-1}$- Carleman estimate, there is a term of integral over interior sub-domain $\om$. However, we remove this term in the above inequality because we have chosen $\om\subset\subset \Om_1$ such that $\ov{\om}\subset\Om_1\setminus \ov{\Om}$ and $y$ vanishes outside of $\Om$. Since $f_0,f_j,j=1,2,3$ are also zero outside of $\Om$, (\ref{eq:Ce-ellip-zero-bdy}) is proved.
\end{proof}
\vspace{0.2cm}

We apply operator div to the first equation in (\ref{sy:MHD4}). By condition (\ref{con:wk-div-zero}),
$$
\Delta p = \mathrm{div} \big(F - L_1(H) - (B^{(1)}\cdot\na)u - (u\cdot\na)B^{(2)} - \na(B^{(3)}\cdot u)\big)
$$
holds at least in the weak sense.
By Sobolev Trace Theorem, there exists $\wt{p}\in H^1(\Om)$ such that
$$
\wt{p} = p \quad on \ \pa\Om
$$
and
\begin{equation}
\label{eq:Stt1}
\|\wt{p}\|_{H^1(\Om)} \le C\|\wt{p}\|_{H^{\frac{1}{2}}(\pa\Om)} = C\|p\|_{H^{\frac{1}{2}}(\pa\Om)}.
\end{equation}
We then set
$$
q = p - \wt{p} \quad in \ \Om.
$$
Thus we have
\begin{equation}
\label{sy:prf-ellip-ext-zero-bdy}
\left\{
\begin{aligned}
&\ \Delta q = \mathrm{div}(F - L_1(H) - (B^{(1)}\cdot\na)u - (u\cdot\na)B^{(2)} - \na(B^{(3)}\cdot u) - \na \wt{p}) &\quad in \ \Om,\ \ \\
&\ q = 0 &\quad on \ \pa\Om.
\end{aligned}
\right.
\end{equation}
Applying Lemma \ref{lem:Ce-H1-zero-bdy} to (\ref{sy:prf-ellip-ext-zero-bdy}), we obtain
$$
\begin{aligned}
\int_\Om &\big( |\na q|^2 + s^2\la^2e^{2\la d}|q|^2\big) e^{2se^{\la d}}dx \\
&\le \; C\int_\Om se^{\la d}|F|^2e^{2se^{\la d}}dx + C\int_\Om se^{\la d}(|\na u|^2 + |u|^2 + |\na H|^2 + |H|^2 + |\na \wt{p}|^2)e^{2se^{\la d}}dx
\end{aligned}
$$
for all $\la\ge \la_0$ and all $s\ge s_0$.
Since $p = q + \wt{p}$, we have
\begin{equation}
\label{eq:prf-ce-1}
\begin{aligned}
\int_\Om \big( |\na p|^2 &+ s^2\la^2e^{2\la d}|p|^2\big) e^{2se^{\la d}}dx \\
\le \; &2\int_\Om \big( |\na q|^2 + s^2\la^2e^{2\la d}|q|^2\big) e^{2se^{\la d}}dx + 2\int_\Om \big( |\na \wt{p}|^2 + s^2\la^2e^{2\la d}|\wt{p}|^2\big) e^{2se^{\la d}}dx \\
\le \; &C\int_\Om se^{\la d}|F|^2e^{2se^{\la d}}dx + Cs^2\la^2e^{2\la \|d\|_{C(\ov{\Om})}}e^{2se^{\la \|d\|_{C(\ov{\Om})}}}\|p\|_{H^{\frac{1}{2}}(\pa\Om)}^2 \\
&+ C\int_\Om se^{\la d}(|\na u|^2 + |u|^2 + |\na H|^2 + |H|^2)e^{2se^{\la d}}dx \\
\end{aligned}
\end{equation}
for all $\la\ge \la_0$ and all $s\ge s_0$. We used (\ref{eq:Stt1}) in the last inequality.

Recall the definition of weight function (\ref{con:choice-l})-(\ref{con:choice-alpha-phi}). Let $s\ge s_1\equiv s_0l(t_0)$. Then $sl^{-1}(t)\ge s_0$ for all $0\le t\le T$. Hence substituting $s$ by $sl^{-1}(t)$ in (\ref{eq:prf-ce-1}) yields
$$
\begin{aligned}
\int_\Om \big( |\na p|^2 &+ s^2\la^2\vp^2|p|^2\big) e^{2s\vp}dx \le \; C\int_\Om s\vp|F|^2e^{2s\vp}dx + Cs^2\la^2 l^{-2}e^{2\la}e^{2sl^{-1}e^{\la}}\|p\|_{H^{\frac{1}{2}}(\pa\Om)}^2 \\
&+ C\int_\Om s\vp(|\na u|^2 + |u|^2 + |\na H|^2 + |H|^2)e^{2s\vp}dx
\end{aligned}
$$
Without loss of generality, we can assume $\|d\|_{C(\ov{\Om})}=1$ here. Multiplying the above inequality by $s^{-1}l(t)e^{-2sl^{-1}(t)e^{2\la}}$ and integrating over $(0,T)$, we obtain
\begin{equation}
\label{eq:Ce-sin-p}
\begin{aligned}
&\int_Q \big( \frac{e^{\la d}}{s\vp}|\na p|^2 + s\la^2\vp e^{\la d}|p|^2\big) e^{2s\alpha}dxdt \le \; C\int_Q e^{\la d}|F|^2e^{2s\alpha}dxdt + C(\la)e^{-s}\|p\|_{L^2(0,T;H^{\frac{1}{2}}(\pa\Om))}^2 \\
&\hspace{5cm} + C\int_Q e^{\la d}(|\na u|^2 + |u|^2 + |\na H|^2 + |H|^2)e^{2s\alpha}dxdt
\end{aligned}
\end{equation}
for all $\la\ge \la_0$ and all $s\ge s_1$.
\vspace{0.2cm}

\noindent $\mathbf{Second\ step.}$ We apply a Carleman estimate for parabolic type.

We have the following lemma.
\begin{lem}
\label{lem:Ce-sin-para}
Let $\vp$ be chosen as (\ref{con:choice-alpha-phi}) and $y\in H^{2,1}(Q)$ satisfy
$$
\begin{aligned}
\quad \pa_t y - \nu(x,t)\De y + \sum_{j=1}^3 b_j(x,t)\pa_j y + c(x,t)y = f \quad in \; Q
\end{aligned}
$$
with $\nu,b_j,c\in W^{1,\infty}(Q)$, $\nu\ge c_0>0$ and $f\in L^2(Q)$, $j=1,2,3$. Then there exist  constants $\la_0>0$, $s_0>0$ and $C>0$ such that
\begin{equation}
\begin{aligned}
\int_Q \bigg\{ \frac{e^{\la d}}{s^2\vp^2}\bigg( |\pa_t y|^2 + \sum_{i,j=1}^3 &|\pa_i \pa_j y|^2 \bigg) + \la^2e^{\la d}|\na y|^2 + s^2\la^4\vp^2e^{\la d} |y|^2\bigg\} e^{2s\al}dxdt \\
\le \; &C\int_Q \f{e^{\la d}}{s\vp}|f|^2e^{2s\al}dxdt + C(\la)e^{-s}\int_{\Sigma} (|y|^2 + |\na_{x,t} y|^2)dSdt
\end{aligned}
\end{equation}
for all $\la\ge \la_0$ and all $s\ge s_0$.
\end{lem}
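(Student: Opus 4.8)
The plan is to read the identity as a Fursikov--Imanuvilov Carleman estimate for the uniformly parabolic principal operator $P_0 y:=\pa_t y-\nu\De y$ carrying the singular weight $e^{2s\al}$ of (\ref{con:choice-alpha-phi}), treating the first- and zeroth-order terms as an absorbable perturbation. First I would move the lower-order terms into the source: setting $\wt f:=f-\sum_{j=1}^3 b_j\pa_j y-cy$, the function $y$ solves $P_0 y=\wt f$ in $Q$, and since $b_j,c\in W^{1,\infty}(Q)$ one has the pointwise bound $|\wt f|^2\le C(|f|^2+|\na y|^2+|y|^2)$. It then suffices to prove the asserted inequality with $\wt f$ in place of $f$ and, at the very end, to absorb the resulting $|\na y|^2$ and $|y|^2$ contributions back into the left-hand side.

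\textbf{Core estimate for $P_0$.} For the principal part I would conjugate, writing $w=e^{s\al}y$ and $P_\al w:=e^{s\al}P_0(e^{-s\al}w)$, and split $P_\al=P_++P_-$ into its self-adjoint and skew-adjoint parts. Expanding $\|P_\al w\|_{L^2(Q)}^2=\|P_+w\|^2+\|P_-w\|^2+2(P_+w,P_-w)$ and integrating the cross term by parts yields the standard pointwise Carleman inequality whose dominant terms are of orders $s^3\la^4\vp^3|y|^2$, $s\la^2\vp|\na y|^2$ and $(s\vp)^{-1}(|\pa_t y|^2+\sum_{i,j}|\pa_i\pa_j y|^2)$; here the large parameter $\la$ is fixed first, its role being to dominate the first-order commutator terms generated by $\na\nu$ and $\na d$ (this is where $|\na d|>0$ and $\nu\ge c_0>0$ enter). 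The precise normalization of the statement is then obtained by multiplying this \emph{pointwise} inequality by the nonnegative temporal factor $\tfrac{l(t)}{s}=\tfrac{e^{\la d}}{s\vp}$ before integrating over $Q$: this sends $s^3\la^4\vp^3\mapsto s^2\la^4\vp^2e^{\la d}$, $s\la^2\vp\mapsto\la^2e^{\la d}$, $(s\vp)^{-1}\mapsto \tfrac{e^{\la d}}{s^2\vp^2}$ and $|\wt f|^2\mapsto\tfrac{e^{\la d}}{s\vp}|\wt f|^2$; the only new contribution it creates, arising from $\pa_t$ falling on $l(t)/s$, has the form $\tfrac1s\int_Q l'(t)\,E\,dxdt$ with $E$ the energy density, and is lower order in $s$, hence reabsorbed. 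Alternatively one may simply invoke the parabolic Carleman estimate of \cite{FI96,IPY09,Y09} in this normalization.

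\textbf{Boundary terms and conclusion.} The integrations by parts also leave temporal-endpoint terms, which vanish because the singular weight forces $e^{2s\al}\to0$ as $t\to0,T$, together with lateral terms on $\Sigma$ involving $|y|$, $|\na_{x,t}y|$ and polynomial prefactors in $s$ and $l^{-1}$. On $\Sigma$ one has $d\le\|d\|_{C(\ov\Om)}$, hence $\al\le -c_\la/l(t)\le -c_\la/l(t_0)$ with $c_\la=e^{2\la\|d\|}-e^{\la\|d\|}>0$, using $l(t)\le l(t_0)$; since $\la$ is already fixed large, $c_\la/l(t_0)$ is large and $e^{2s\al}|_\Sigma\le e^{-2sc_\la/l(t_0)}$ dominates every polynomial prefactor, collapsing all lateral terms into $C(\la)e^{-s}\int_\Sigma(|y|^2+|\na_{x,t}y|^2)\,dSdt$. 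Finally, the perturbation terms $\tfrac{e^{\la d}}{s\vp}(|\na y|^2+|y|^2)$ coming from $\wt f$ are controlled by the left-hand side, since their coefficients fall short of those of $\la^2e^{\la d}|\na y|^2$ and $s^2\la^4\vp^2e^{\la d}|y|^2$ by factors $\tfrac{1}{s\la^2\vp}$ and $\tfrac{1}{s^3\la^4\vp^3}$ respectively, both of which tend to $0$ as $s\to\infty$ (using $\vp\ge 1/\max_{[0,T]} l$); choosing $s_0$ large then completes the proof. I expect the main obstacle to be the bookkeeping of the lateral boundary integrals on $\Sigma$ under the \emph{nonhomogeneous} Dirichlet data --- the textbook estimate assumes $y|_\Sigma=0$ --- namely keeping the $\la$-dependence explicit while verifying that these terms indeed collapse into the single exponentially small factor $C(\la)e^{-s}$.
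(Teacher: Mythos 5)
The paper does not actually prove Lemma \ref{lem:Ce-sin-para}: it delegates it to Chae--Imanuvilov--Kim \cite{CIK96} and Imanuvilov \cite{I95}, which establish singular-weight parabolic Carleman estimates by exactly the weighted-energy (Fursikov--Imanuvilov) method you sketch. So in substance your route is the one the paper points to, and your handling of the two non-textbook features is right: the nonhomogeneous lateral data do collapse into $C(\la)e^{-s}\int_\Sigma(|y|^2+|\na_{x,t}y|^2)\,dSdt$, because on $\Sigma$ one has $\al\le -(e^{2\la\|d\|_{C(\ov{\Om})}}-e^{\la\|d\|_{C(\ov{\Om})}})/l(t_0)$, which beats every polynomial prefactor once $\la$ is fixed large; and the perturbation $\f{e^{\la d}}{s\vp}(|\na y|^2+|y|^2)$ is indeed absorbable into the left-hand side for large $s$.

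One step, however, is mis-stated, and it is the crux of this particular lemma. There is no ``pointwise Carleman inequality'': in the Fursikov--Imanuvilov scheme the dominant positive terms $s^3\la^4\vp^3|w|^2$ and $s\la^2\vp|\na w|^2$ emerge only \emph{after} the integrations by parts hidden in the cross term $2(P_+w,P_-w)_{L^2(Q)}$, so there is nothing pointwise to multiply by $l(t)/s$. Nor can you multiply the already-integrated standard estimate by $l(t)/s$, since that factor depends on $t$; and the cheap bound $l(t)/s\le l(t_0)/s$ does not suffice, because the lemma's right-hand side carries the weight $\f{e^{\la d}}{s\vp}|f|^2=\f{l(t)}{s}|f|^2$, which degenerates as $t\to 0,T$ --- the lemma is strictly stronger than $\f{l(t_0)}{s}$ times the standard estimate, and this extra strength is exactly what the paper needs so that the term $C\int_Q\f{e^{\la d}}{s\vp}|\na p|^2e^{2s\al}\,dxdt$ produced by the source can be matched against the pressure estimate (\ref{eq:Ce-sin-p}) in the proof of Theorem \ref{thm:CEDP}. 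The correct implementation --- which your remark about the $\f{1}{s}\int_Q l'E\,dxdt$ commutator shows you essentially have in mind --- is to multiply the pointwise \emph{identity} $|P_\al w|^2=|P_+w|^2+|P_-w|^2+2(P_+w)(P_-w)$ by $l(t)/s$, integrate, and then carry the factor through all integrations by parts: the $x$-integrations are untouched (the factor is $x$-independent), the temporal endpoint terms still vanish because of the singular weight, and each $t$-integration by parts replaces one factor $\f{l}{s}$ by $\f{l'}{s}$; since $|l'|\le C$ while the rescaled dominant terms keep their full powers of $s\vp$ (and of $\la$, $e^{\la d}\ge 1$), these errors are absorbed for $s\ge s_0(\la)$. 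With that restatement your sketch is sound; but your fallback of ``simply invoking'' \cite{FI96,IPY09,Y09} in this normalization is not available, because those references state the estimate only in the standard normalization, and the passage to the present one is precisely the nontrivial step.
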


This proof is similar to that in Chae, Imanuvilov and Kim \cite{CIK96}. See also Imanuvilov \cite{I95}.
\vspace{0.2cm}

We rewrite the first equation in (\ref{sy:MHD4}) to get
\begin{equation*}
\pa_t u - \nu \Delta u + (B^{(1)}\cdot\na)u + (u\cdot\na)B^{(2)} + \na(B^{(3)}\cdot u)= F - \na p - L_1(H).
\end{equation*}
Applying Lemma \ref{lem:Ce-sin-para} to each component of above equations, we obtain
\begin{equation}
\label{eq:Ce-sin-u}
\begin{aligned}
&\int_Q \bigg\{ \frac{e^{\la d}}{s^2\vp^2}\bigg( |\pa_t u|^2 + \sum_{i,j=1}^3 |\pa_i \pa_j u|^2 \bigg) + \la^2e^{\la d}|\na u|^2 + s^2\la^4\vp^2e^{\la d} |u|^2\bigg\} e^{2s\al}dxdt \le C\int_Q \f{e^{\la d}}{s\vp}|F|^2e^{2s\al}dxdt \\
&\hspace{1cm} + C\int_Q \f{e^{\la d}}{s\vp}(|\na p|^2 + |\na H|^2 + |H|^2)e^{2s\al}dxdt + C(\la)e^{-s}\big(\|u\|_{L^2(\Sigma)}^2 + \|\na_{x,t} u\|_{L^2(\Sigma)}^2\big)
\end{aligned}
\end{equation}
for all $\la\ge \la_1$ and all $s\ge s_2$.

Next, we apply Carleman estimate for parabolic type to the second equation of (\ref{sy:MHD4}) and we have the following estimate:
\begin{equation}
\label{eq:Ce-sin-H}
\begin{aligned}
&\int_Q \bigg\{ \frac{e^{\la d}}{s^2\vp^2}\bigg( |\pa_t H|^2 + \sum_{i,j=1}^3 |\pa_i \pa_j H|^2 \bigg) + \la^2e^{\la d} |\na H|^2 + s^2\la^4\vp^2e^{\la d} |H|^2\bigg\} e^{2s\al}dxdt \le C\int_Q |G|^2e^{2s\al}dxdt \\
&\hspace{1cm} + C\int_Q \f{e^{\la d}}{s\vp}(|\na u|^2 + |u|^2)e^{2s\al}dxdt + C(\la)e^{-s}\big(\|H\|_{L^2(\Sigma)}^2 + \|\na_{x,t} H\|_{L^2(\Sigma)}^2\big)
\end{aligned}
\end{equation}
for all $\la\ge \la_2$ and all $s\ge s_3$. Here we used $s^{-1}\vp^{-1}e^{\la d}\le 1\ in\; Q$ for any $s\ge s_1$.
\vspace{0.2cm}

\noindent $\mathbf{Third\ step.}$ We combine the estimates for $p,u$ and $H$.

Combining (\ref{eq:Ce-sin-p}), (\ref{eq:Ce-sin-u}) and (\ref{eq:Ce-sin-H}), we obtain
$$
\begin{aligned}
& \int_Q \bigg\{ \frac{e^{\la d}}{s^2\vp^2}\bigg( |\pa_t u|^2 + \sum_{i,j=1}^3 |\pa_i \pa_j u|^2\bigg) + \la^2e^{\la d}|\na u|^2 + s^2\la^4\vp^2e^{\la d} |u|^2 + \frac{e^{\la d}}{s\vp}|\na p|^2 + s\la^2\vp e^{\la d} |p|^2 \\
&\hspace{1cm} + \frac{e^{\la d}}{s^2\vp^2}\bigg( |\pa_t H|^2 + \sum_{i,j=1}^3 |\pa_i \pa_j H|^2\bigg) + \la^2e^{\la d} |\na H|^2 + s^2\la^4\vp^2e^{\la d} |H|^2 \bigg\}e^{2s\alpha}dxdt \\
& \le \;C\int_Q e^{\la d}( |F|^2 + |G|^2 ) e^{2s\alpha}dxdt + C\int_Q e^{\la d}\big( |\na u|^2 + |u|^2 + |\na H|^2 + |H|^2 \big) e^{2s\alpha}dxdt \\
&\hspace{1cm} + C(\la)e^{-s}\bigg(\|u\|_{L^2(\Sigma)}^2 + \|\na_{x,t} u\|_{L^2(\Sigma)}^2 + \|H\|_{L^2(\Sigma)}^2 + \|\na_{x,t} H\|_{L^2(\Sigma)}^2 + \|p\|_{L^2(0,T;H^{\frac{1}{2}}(\pa\Om))}^2 \bigg)
\end{aligned}
$$
for all $\la\ge \la_2$ and all $s\ge s_3$. Finally we can fix $\la$ large enough to absorb the second term on the right-hand side into the left-hand side. By the relations $e^{\la d}\ge 1,\la \ge 1$, we obtain
$$
\begin{aligned}
&\|(u,p,H)\|_{\chi_s(Q)}^2 \le \;C(\la)\int_Q \big(|F|^2 + |G|^2\big) e^{2s\alpha}dxdt \\
&\hspace{1cm} + C(\la)e^{-s}\bigg(\|u\|_{L^2(\Sigma)}^2 + \|\na_{x,t} u\|_{L^2(\Sigma)}^2 + \|H\|_{L^2(\Sigma)}^2 + \|\na_{x,t} H\|_{L^2(\Sigma)}^2 + \|p\|_{L^2(0,T;H^{\frac{1}{2}}(\pa\Om))}^2 \bigg)
\end{aligned}
$$
for fixed $\la$ large enough and all $s \ge s_4\equiv max\{s_1,s_2,s_3\}$.
\vspace{0.2cm}

The proof of Theorem \ref{thm:CEDP} is completed.
\end{proof}

On the other hand, we investigate the following two first-order partial differential operators: 
\vspace{0.2cm}

($\romannumeral1)\quad P f := \mathrm{div} (fA)=A \na f + f\mathrm{div} A, \quad f\in H^1(\Om)$, 
\vspace{0.2cm}

($\romannumeral2)\quad Q g := \mathrm{rot} (gb)=\na g\times b + g\mathrm{rot} b, \quad g\in H^1(\Om)$
\vspace{0.2cm}

\noindent where  $A=(A_{ij})_{i,j}$ is a $3\times 3$ matrix and $b=(b_1,b_2,b_3)^T$ is a vector satisfying $A\in W^{1,\infty}(\Om), b\in W^{2,\infty}(\Om)$. Recall that the divergence of a matrix is defined as $[\mathrm{div} A]_k=\sum_{j=1}^3 \pa_j A_{kj}$. We have the following Carleman inequalities: 
\begin{thm}
\label{thm:CEIP}
Let $d$ be chosen as (\ref{con:d}) and $\vp_0:=e^{\la d}$. Assume that 
$$
\mathrm{det} A(x) \neq 0 \ and \ |\na d(x) \times b(x)|\neq 0, \qquad for\ x\in \ov{\Om}.
$$
Then there exist constants $\la_0\ge 1$, $s_0\ge 1$ and a generic constant $C>0$ such that
\begin{equation}
\label{CEIP1}
\int_{\Om} (|\na f|^2 + s^2\la^2\vp_0^2 |f|^2)e^{2s\vp_0} dx \le C\int_{\Om} |P f|^2 e^{2s\vp_0}dx + C\int_{\Gamma} s\la\vp_0 |f|^2e^{2s\vp_0}d\sigma
\end{equation}
and
\begin{equation}
\label{CEIP2}
\begin{aligned}
&\int_{\Om} (|\na g|^2 + s^2\la^2\vp_0^2 |g|^2)e^{2s\vp_0} dx \le C\int_{\Om} (\frac{1}{s^2\la^2\vp_0^2}|\na (Qg)|^2 + |Qg|^2) e^{2s\vp_0}dx \\
&\hspace{5cm} + C\int_{\partial\Om} (\frac{1}{s\la\vp_0}|\na g|^2 + s\la\vp_0 |g|^2)e^{2s\vp_0}d\sigma
\end{aligned}
\end{equation}
for all $\la\ge \la_0$, $s\ge s_0$ and $f\in H^1(\Om), g\in H^2(\Om)$. 
\end{thm}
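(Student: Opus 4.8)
The plan is to prove both inequalities by the standard device of conjugating with the Carleman weight and extracting positivity from the zeroth-order coefficient of the resulting first-order operator, the nondegeneracy hypotheses ($\det A\neq 0$ and $|\na d\times b|\neq 0$) being exactly what makes that coefficient uniformly nonzero in the relevant direction. Throughout I use $\na\vp_0=\la\vp_0\na d$, the invertibility of $A$ on the compact set $\ov\Om$ (giving $|A\xi|\ge c_0|\xi|$ uniformly), and the lower bound $|\na d|\ge c_1>0$.

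For (\ref{CEIP1}) I would set $w=e^{s\vp_0}f$, so that $\na f=e^{-s\vp_0}(\na w-s\la\vp_0 w\na d)$ and the conjugated operator reads
\[ \mathcal P_s w:=e^{s\vp_0}Pf=A\na w+wV,\qquad V:=\mathrm{div}\,A-s\la\vp_0\,A\na d. \]
Dropping the nonnegative term $|A\na w|^2$ in $|\mathcal P_s w|^2$ and integrating the cross term by parts via $2w(A\na w)\cdot V=\na(w^2)\cdot(A^TV)$ yields
\[ \int_\Om |\mathcal P_s w|^2\,dx\ \ge\ \int_\Om w^2\big(|V|^2-\mathrm{div}(A^TV)\big)\,dx+\int_{\pa\Om}w^2\,(A^TV)\cdot n\,d\sigma. \]
Since $|V|^2=s^2\la^2\vp_0^2|A\na d|^2+O(s\la\vp_0)$ while $\mathrm{div}(A^TV)=O(s\la^2\vp_0)$, and since $|A\na d|\ge c_0c_1>0$ by $\det A\neq0$, the interior integrand is bounded below by $c\,s^2\la^2\vp_0^2 w^2$ for $s,\la$ large. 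This gives $\int s^2\la^2\vp_0^2 w^2\lesssim\int|\mathcal P_s w|^2+\text{(boundary)}$; the gradient term is then recovered for free from $\na w-s\la\vp_0 w\na d=A^{-1}(\mathcal P_s w-w\,\mathrm{div}\,A)$, the extra $\int w^2$ being absorbed since $\vp_0\ge1$.

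It remains to confine the boundary integral to $\Ga$. On $\pa\Om\setminus\Ga$ the choice (\ref{con:d}) gives $d=0$, hence $\vp_0=1$ and $\na d=(\pa_n d)n$ with $\pa_n d<0$ (as $d>0$ inside and $|\na d|>0$). The dominant boundary integrand is therefore $-s\la\vp_0 w^2(A\na d)\cdot(An)=-s\la(\pa_n d)|An|^2w^2\ge0$, again using that $A$ is invertible so $|An|\ge c_0$. Thus for $s$ large this part of the boundary has a favorable sign and may be discarded from the lower bound, leaving only $\int_\Ga s\la\vp_0 f^2e^{2s\vp_0}d\sigma$, which is precisely the right-hand side of (\ref{CEIP1}).

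For (\ref{CEIP2}) the same mechanism applies with $w=e^{s\vp_0}g$ and $\mathcal Q_s w=e^{s\vp_0}Qg=(\na_\varphi w)\times b+w\,\mathrm{rot}\,b$, where $\na_\varphi w:=\na w-s\la\vp_0 w\na d$; the cross-term computation now has leading coefficient $-s\la\vp_0 w(\na d\times b)$ and produces $\int s^2\la^2\vp_0^2 w^2$ together with the components of $\na_\varphi w$ orthogonal to $b$, the positivity coming from $|\na d\times b|\ge c_0>0$ (which also forces $|b|\ge c_0$); here I would keep the full boundary $\pa\Om$ rather than just $\Ga$. The essential difficulty, and the reason this estimate is weaker than (\ref{CEIP1}), is that the cross product annihilates the derivative of $g$ along $b$: from $(\na_\varphi w)\times b$ one controls only the two components of $\na_\varphi w$ transverse to $b$, never $b\cdot\na_\varphi w$, so the full $|\na g|^2$ cannot be obtained from $Q g$ alone. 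To recover this last component I would differentiate the data: $b\cdot\na_\varphi w$ satisfies a further first-order relation whose source involves $\na\big((\na_\varphi w)\times b\big)=\na(\mathcal Q_s w)-\na(w\,\mathrm{rot}\,b)$, and since $\na(\mathcal Q_s w)$ is comparable to $e^{s\vp_0}(|\na(Qg)|+s\la\vp_0|Qg|)$, estimating it with the gain $\tfrac{1}{s^2\la^2\vp_0^2}$ produces exactly the terms $\tfrac{1}{s^2\la^2\vp_0^2}|\na(Qg)|^2$ and $|Qg|^2$, while the accompanying integrations by parts generate the boundary contributions $\tfrac{1}{s\la\vp_0}|\na g|^2+s\la\vp_0|g|^2$. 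Controlling this $b$-parallel component is the main obstacle; the residual interior terms it creates carry the factor $s^{-2}\la^{-2}\vp_0^{-2}$ and are absorbed into the left-hand side for $s,\la$ large.
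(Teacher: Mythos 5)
Your proposal is correct in substance and runs on the same engine as the paper's proof: conjugate by $e^{s\vp_0}$, expand the square, discard the nonnegative gradient square, integrate the cross term by parts, and use $\pa_n d<0$ on $\pa\Om\setminus\Ga$ (a consequence of (\ref{con:d})) to confine the surviving boundary integral to $\Ga$; for $Q$, differentiate the equation and re-apply the zeroth-order estimate with the gain $s^{-2}\la^{-2}\vp_0^{-2}$. The paper's concrete realization of your ``further first-order relation'' is $Q_k(\pa_k g):=\pa_k(Qg)-\na g\times\pa_k b-g\,\mathrm{rot}(\pa_k b)=\na(\pa_k g)\times b+(\pa_k g)\,\mathrm{rot}\,b$, to which the zeroth-order estimate is applied for each $k$ and summed; this controls \emph{every} component of $\na g$ in weighted $L^2$ (not only the $b$-parallel part), with exactly the interior terms $\f{1}{s^2\la^2\vp_0^2}|\na(Qg)|^2+|Qg|^2$ and boundary terms $\f{1}{s\la\vp_0}|\na g|^2+s\la\vp_0|g|^2$ you predict, the residual $\f{1}{s^2\la^2\vp_0^2}(|\na g|^2+|g|^2)$ being absorbed for large $\la$. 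The genuine difference is in (\ref{CEIP1}): the paper never works with the matrix operator directly, but contracts (\ref{eq1}) against auxiliary vectors $a$ and $a^{(l)}$ solving $A^Ta=\na d$ and $A^Ta^{(l)}=e_l$, reducing everything to scalar transport operators, and then obtains the gradient bound by a second round of integrations by parts; your algebraic recovery $e^{s\vp_0}\na f=A^{-1}\big(e^{s\vp_0}Pf-w\,\mathrm{div}\,A\big)$, with the extra $\int w^2$ absorbed via the zeroth-order bound, replaces that entire second round and is simpler.

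One step, however, would fail as written. Integrating by parts the full cross term $2w(A\na w)\cdot V=\na(w^2)\cdot(A^TV)$ produces the interior coefficient $\mathrm{div}(A^TV)$, which contains $\mathrm{div}(A^T\mathrm{div}\,A)$, i.e.\ second derivatives of $A$; Theorem \ref{thm:CEIP} assumes only $A\in W^{1,\infty}(\Om)$, and in the intended application $A=2\mathcal{E}(u_1(\cdot,t_0))$ is no smoother, so the bound $\mathrm{div}(A^TV)=O(s\la^2\vp_0)$ is not available. The paper avoids this by keeping the $\mathrm{div}\,A$ term inside the discarded square: expand $|\,(A\na w+w\,\mathrm{div}\,A)-s\la\vp_0(A\na d)w\,|^2$ around the group $A\na w+w\,\mathrm{div}\,A$ and drop that group's square, so that the only term ever differentiated is $s\la\vp_0A^TA\na d$ (first derivatives of $A$ and the $C^2$ function $d$), while the remaining cross term $-2s\la\vp_0\big((A\na d)\cdot\mathrm{div}\,A\big)w^2$ is simply absorbed as $O(s\la\vp_0)w^2$. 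With this regrouping (the boundary contribution is unchanged, and its sign analysis on $\pa\Om\setminus\Ga$ is exactly as you gave), your argument for (\ref{CEIP1}) is complete.
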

To proof these inequalities, we apply the idea of Lemma 6.1 in \cite{Y09}. 
\begin{proof}
We first prove inequality (\ref{CEIP1}). Set $w=fe^{s\vp_0}$. Then 
$$
Pf=P(we^{-s\vp_0})=e^{-s\vp_0}(A\na w + w\mathrm{div} A - s\la\vp_0 (A\na d) w).
$$
We rewrite it in components, that is
\begin{equation}
\label{eq1}
[Pf]_ke^{s\vp_0}=\sum_{j=1}^3 \big( A_{kj}\pa_j w + \pa_j A_{kj} w - s\la\vp_0 (A_{kj}\pa_j d)w\big)
\end{equation}
Now choose $a=(a_1,a_2,a_3)^T\in L^\infty (\Om)$ such that $\sum_{k=1}^3 a_k A_{kj} = \pa_j d$ for any $x\in \ov{\Om}$. In fact, the existence of such $\{a_k\}_{k=1,2,3}$ comes from the assumption $\mathrm{det} A\neq 0$ on $\ov{\Om}$. 

We multiply $a_k$ to equation (\ref{eq1}) and take summation over $k$:
$$
\sum_{k=1}^3 a_k[Pf]_k e^{s\vp_0} = \na d\cdot \na w + \big(\sum_{j,k=1}^3 a_k \pa_j A_{kj}\big) w - s\la\vp_0 |\na d|^2 w
$$
Then we estimate
$$
\begin{aligned}
&\int_\Om \Big|\sum_{k=1}^3 a_k[Pf]_k \Big|^2e^{2s\vp_0}dx = \int_\Om s^2\la^2\vp_0^2|\na d|^4|w|^2 dx + \int_\Om |\na d \cdot \na w + (a\cdot \mathrm{div} A) w|^2 dx \\
&\hspace{4.3cm} - 2\int_\Om s\la\vp_0 |\na d|^2(\na d\cdot \na w + (a\cdot\mathrm{div}A) w)w dx \\
&\hspace{3.9cm}\ge \int_\Om s^2\la^2\vp_0^2|\na d|^4|w|^2 dx - 2\int_\Om s\la\vp_0 |\na d|^2 (a\cdot \mathrm{div} A)|w|^2 dx \\
&\hspace{4.3cm} - \int_{\pa\Om} s\la\vp_0 |\na d|^2 \frac{\pa d}{\pa n}|w|^2 d\sigma + \int_\Om s\la \mathrm{div} (\vp_0|\na d|^2 \na d)|w|^2 dx \\
&\hspace{3.9cm}\ge \int_\Om s^2\la^2\vp_0^2|\na d|^4|w|^2 dx - \int_{\Gamma} s\la\vp_0 |\na d|^2 \frac{\pa d}{\pa n}|w|^2 d\sigma \\
&\hspace{4.3cm} + \int_\Om s\la\vp_0 \big(\la|\na d|^4 + \na |\na d|^2 \na d + |\na d|^2 (\Delta d - 2(a\cdot\mathrm{div} A))\big)|w|^2 dx. 
\end{aligned}
$$
In the last inequality, we used the relation (\ref{con:d}) to get $\frac{\pa d}{\pa n}<0$ on $\pa\Om\setminus \Gamma$. By choose $\la$ large, we can absorb the third term on the right-hand side. Thus,
\begin{equation}
\label{eq2}
\int_\Om s^2\la^2\vp_0^2|f|^2e^{2s\vp_0} dx \le C \int_\Om |Pf|^2e^{2s\vp_0}dx + C \int_\Gamma s\la\vp_0|f|^2e^{2s\vp_0} d\sigma
\end{equation} 
holds for all $\la\ge \la_1$ and $s\ge 1$. 

Furthermore, for $l=1,2,3$, we can also choose $a^{(l)}=(a_1^{(l)},a_2^{(l)},a_3^{(l)})^T\in L^\infty (\Om)$ such that $\sum_{k=1}^3 a_k^{(l)} A_{kj}=\delta_{lj}$. Take summation over $k$ after multiply $a_k^{(l)}$ to (\ref{eq1}):
$$
\sum_{k=1}^3 a_k^{(l)}[Pf]_k e^{s\vp_0} = \pa_l w + \big(\sum_{j,k=1}^3 a_k^{(l)} \pa_j A_{kj}\big) w - s\la\vp_0 (\pa_l d) w
$$
Again we estimate
$$
\begin{aligned}
&\int_\Om \Big|\sum_{k=1}^3 a_k^{(l)}[Pf]_k \Big|^2e^{2s\vp_0}dx = \int_\Om |\pa_l w|^2 dx + \int_\Om |(a^{(l)}\cdot \mathrm{div} A) - s\la\vp_0(\pa_l d)|^2 |w|^2 dx \\
&\hspace{4.3cm} + 2\int_\Om \big((a^{(l)}\cdot \mathrm{div} A) - s\la\vp_0(\pa_l d)\big)w(\pa_l w) dx \\
&\hspace{3.9cm}\ge \int_\Om |\pa_l w|^2 dx + 2\int_\Om (a^{(l)}\cdot \mathrm{div} A)w(\pa_l w) dx \\
&\hspace{4.3cm} - \int_{\pa\Om} s\la\vp_0 (\pa_l d)n_l |w|^2 d\sigma + \int_\Om s\la\vp_0(\la|\pa_l d|^2 + \pa_l^2 d)|w|^2 dx.
\end{aligned}
$$
Rewrite the above inequality and take summation over $l$ on both sides:
$$
\begin{aligned}
&\int_\Om |\na w|^2 dx \le \int_\Om \sum_{l=1}^3\Big|\sum_{k=1}^3 a_k^{(l)}[Pf]_k \Big|^2e^{2s\vp_0}dx + \int_{\pa\Om} s\la\vp_0 \frac{\pa d}{\pa n} |w|^2 d\sigma \\
&\hspace{2.5cm} - 2\sum_{l=1}^3\int_\Om (a^{(l)}\cdot \mathrm{div} A)w(\pa_l w) dx - \int_\Om s\la\vp_0(\la|\na d|^2 + \Delta d)|w|^2 dx \\
&\hspace{1.8cm} \le C\int_\Om |Pf|^2e^{2s\vp_0}dx + \int_\Gamma s\la\vp_0\frac{\pa d}{\pa n} |w|^2 d\sigma \\
&\hspace{2.5cm} + \frac{1}{2}\int_\Om |\na w|^2 dx + 2\int_\Om \sum_{l=1}^3 |a^{(l)}\cdot\mathrm{div} A|^2 |w|^2 dx + \int_\Om s\la\vp_0|w|^2 dx
\end{aligned}
$$
This leads to
$$
\int_\Om |\na w|^2 dx \le C\int_\Om |Pf|^2e^{2s\vp_0}dx + C\int_\Gamma s\la\vp_0 |w|^2 d\sigma + C\int_\Om s\la\vp_0|w|^2 dx
$$
Together with (\ref{eq2}) and take $\la$ large enough to absorb the last term on the right-hand side. Finally, we obtain
$$
\int_{\Om} (|\na f|^2 + s^2\la^2\vp_0^2 |f|^2)e^{2s\vp_0} dx \le C\int_{\Om} |P f|^2 e^{2s\vp_0}dx + C\int_{\Gamma} s\la\vp_0 |f|^2e^{2s\vp_0}d\sigma
$$
for all $\la\ge \la_2$ and $s\ge 1$. 

Next we consider the operator $Q$. 
Set $v=ge^{s\vp_0}$. Then
$$
Qg=Q(ve^{-s\vp_0})=e^{-s\vp_0}(\na v\times b + (\mathrm{rot} b)v - s\la\vp_0(\na d\times b)v).
$$
There is no hope to do in the same way as for operator $P$. In fact, we denote
$$
B=
\begin{pmatrix}
0 & b_3 & -b_2 \\
-b_3 & 0 & b_1 \\
b_2 & -b_1 & 0 \\
\end{pmatrix}.
$$
Then we rewrite the above formula:
$$
Qg e^{s\vp_0} = B\na v + (\mathrm{rot} b)v - s\la\vp_0(B\na d)v.
$$
However, $\mathrm{det} B = b_1b_2b_3 + (-b_1b_2b_3)=0$. Thus, we calculate directly
$$
\begin{aligned}
&\int_\Om |Qg|^2e^{2s\vp_0} dx = \int_\Om s^2\la^2\vp_0^2 |B\na d|^2 |v|^2 dx + \int_\Om |B\na v + (\mathrm{rot} b)v|^2 dx \\
&\hspace{2.7cm} -2\int_\Om s\la\vp_0 (B\na d)\cdot (B\na v + (\mathrm{rot} b)v)v dx \\
&\hspace{2.5cm} \ge \int_\Om s^2\la^2\vp_0^2 |B\na d|^2 |v|^2 dx -2\int_\Om s\la\vp_0(B\na d)\cdot (\mathrm{rot} b)|v|^2 dx \\
&\hspace{2.7cm} -\int_{\pa\Om} s\la\vp_0 (B\na d)\cdot (Bn)|v|^2 d\sigma + \int_\Om s\la\vp_0 \big(\la |B\na d|^2 + \mathrm{div}(B^T(B\na d))\big) |v|^2 dx
\end{aligned}
$$
By noting the assumption that $|B\na d|=|\na d\times b| \neq 0$ in $\ov{\Om}$, we can take $\la$ large to absorb the second and fourth terms on the right-hand side:
\begin{equation}
\label{eq3}
\int_\Om s^2\la^2\vp_0^2 |g|^2e^{2s\vp_0} dx \le C\int_\Om |Qg|^2e^{2s\vp_0} dx + C\int_{\pa\Om}s\la\vp_0 |g|^2e^{2s\vp_0} d\sigma
\end{equation}
for all $\la\ge \la_3$ and $s\ge 1$. 

We take the $k$-th derivative of (\romannumeral2) and denote $g_k=\pa_k g$. Define
$$
Q_k g_k:= \pa_k(Qg) - \na g \times \pa_k b - g(\mathrm{rot}(\pa_k b))= \na g_k\times b + g_k(\mathrm{rot}b).
$$
By applying similar argument above to operator $Q_k$, we have 
$$
\begin{aligned}
&\int_\Om |g_k|^2e^{2s\vp_0} dx \le C\int_\Om \frac{1}{s^2\la^2\vp_0^2}|Q_k g_k|^2e^{2s\vp_0} dx + C\int_{\pa\Om} \frac{1}{s\la\vp_0} |g_k|^2e^{2s\vp_0} d\sigma \\
&\hspace{0cm} \le C\int_\Om \frac{1}{s^2\la^2\vp_0^2}|\pa_k (Qg)|^2e^{2s\vp_0} dx + C\int_\Om \frac{1}{s^2\la^2\vp_0^2}(|\na g|^2 +|g|^2)e^{2s\vp_0}dx + C\int_{\pa\Om}\frac{1}{s\la\vp_0} |g_k|^2e^{2s\vp_0} d\sigma
\end{aligned}
$$
for all $\la \ge \la_4$, $s\ge 1$ and $k=1,2,3$. Sum up the estimates over $k$ and absorb again the lower-order terms by taking $\la$ large:
\begin{equation}
\label{eq4}
\begin{aligned}
&\int_\Om |\na g|^2e^{2s\vp_0} dx \le C\int_\Om \frac{1}{s^2\la^2\vp_0^2}|\na (Qg)|^2e^{2s\vp_0} dx + C\int_\Om \frac{1}{s^2\la^2\vp_0^2}|g|^2e^{2s\vp_0}dx \\
&\hspace{2.8cm}+ C\int_{\pa\Om} \frac{1}{s\la\vp_0} |\na g|^2e^{2s\vp_0} d\sigma
\end{aligned}
\end{equation}
for all $\la \ge \la_5$ and all $s\ge 1$. Combining (\ref{eq3}) and (\ref{eq4}), we proved (\ref{CEIP2}) and also Theorem \ref{thm:CEIP} with $\la_0=max\{\la_i:\ 1\le i\le 5\}$ and $s_0=1$.   

\end{proof}
In \eqref{CEIP1} and \eqref{CEIP2}, we let $s_1 = s_0 l(t_0) = l(t_0)$. Then for all $s\ge s_1$, $sl^{-1}(t_0)\ge s_1l^{-1}(t_0) = s_0$. Substituting $s$ by $sl^{-1}(t_0)$ yields
\begin{align*}
\int_{\Om} (|\na f|^2 + s^2\la^2\vp^2(x,t_0) |f|^2)e^{2s\vp(x,t_0)} dx \le C\int_{\Om} |P f|^2 e^{2s\vp(x,t_0)}dx + C\int_{\Gamma} s\la\vp(x,t_0) |f|^2e^{2s\vp(x,t_0)}d\sigma
\end{align*}
and
\begin{align*}
&\int_{\Om} (|\na g|^2 + s^2\la^2\vp^2(x,t_0) |g|^2)e^{2s\vp(x,t_0)} dx \le C\int_{\Om} (\frac{1}{s^2\la^2\vp^2(x,t_0)}|\na (Qg)|^2 + |Qg|^2) e^{2s\vp(x,t_0)}dx \\
&\hspace{5cm} + C\int_{\partial\Om} (\frac{1}{s\la\vp(x,t_0)}|\na g|^2 + s\la\vp(x,t_0) |g|^2)e^{2s\vp(x,t_0)}d\sigma
\end{align*}
for all $\la\ge \la_0$ and all $s\ge s_1$. By multiplying $exp\{-2s\f{e^{2\la\|d\|}}{l(t_0)}\}$ on both inequalities, we derive 
\begin{thm}
\label{thm:CEIP-sin}
Under the assumptions that 
$$
\mathrm{det} A(x) \neq 0 \ and \ |\na d(x) \times b(x)|\neq 0, \qquad for\ x\in \ov{\Om},
$$
there exist constants $\la_0\ge 1$, $s_0\ge 1$ and a generic constant $C>0$ such that
\begin{align*}
\int_{\Om} (|\na f|^2 + s^2\la^2\vp^2(x,t_0) |f|^2)e^{2s\al (x,t_0)} dx \le C\int_{\Om} |P f|^2 e^{2s\al (x,t_0)}dx + C\int_{\Gamma} s\la\vp(x,t_0) |f|^2e^{2s\al (x,t_0)}d\sigma
\end{align*}
and
\begin{align*}
&\int_{\Om} (|\na g|^2 + s^2\la^2\vp^2(x,t_0) |g|^2)e^{2s\al (x,t_0)} dx \le C\int_{\Om} (\frac{1}{s^2\la^2\vp^2(x,t_0)}|\na (Qg)|^2 + |Qg|^2) e^{2s\al (x,t_0)}dx \\
&\hspace{5cm} + C\int_{\pa\Om} (\frac{1}{s\la\vp(x,t_0)}|\na g|^2 + s\la\vp(x,t_0) |g|^2)e^{2s\al (x,t_0)}d\sigma
\end{align*}
for all $\la\ge \la_0$, $s\ge s_0$ and $f\in H^1(\Om), g\in H^2(\Om)$. 
\end{thm}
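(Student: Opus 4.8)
The plan is to obtain Theorem~\ref{thm:CEIP-sin} as a direct consequence of the time-independent Carleman estimates of Theorem~\ref{thm:CEIP}, by rescaling the large parameter $s$ and then multiplying through by an $x$-independent constant. The crucial algebraic observations are the identities $\vp(x,t_0)=\vp_0(x)/l(t_0)$ and $\al(x,t_0)=(\vp_0(x)-e^{2\la\|d\|_{C(\ov{\Om})}})/l(t_0)$, which follow at once from the definitions \eqref{con:choice-alpha-phi} together with $\vp_0=e^{\la d}$. No new analytic input is needed beyond Theorem~\ref{thm:CEIP}.

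First I would set $s_1:=s_0\,l(t_0)=l(t_0)$, using $s_0=1$ from the proof of Theorem~\ref{thm:CEIP}. For every $s\ge s_1$ we then have $s/l(t_0)\ge s_0$, so the substitution $s\mapsto s/l(t_0)$ is admissible in both inequalities of Theorem~\ref{thm:CEIP}. Performing this substitution, every weighted quantity transforms through the single rule that each occurrence of the product $s\vp_0$ is replaced by $s\vp(x,t_0)$, since $(s/l(t_0))\,e^{\la d}=s\,\vp(x,t_0)$. Consequently $e^{2s\vp_0}\mapsto e^{2s\vp(x,t_0)}$, $s^2\la^2\vp_0^2\mapsto s^2\la^2\vp^2(x,t_0)$ and $s\la\vp_0\mapsto s\la\vp(x,t_0)$, while the reciprocal factors $1/(s\la\vp_0)$ and $1/(s^2\la^2\vp_0^2)$ appearing in the estimate for $Q$ rescale in the same consistent fashion. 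This produces the two intermediate inequalities carrying the weight $e^{2s\vp(x,t_0)}$.

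Next I would multiply both intermediate inequalities by the constant $\exp\{-2s\,e^{2\la\|d\|_{C(\ov{\Om})}}/l(t_0)\}$. Since this factor is independent of $x$, it passes freely through every spatial integral over $\Om$, $\Gamma$ and $\pa\Om$, and combining it with each occurrence of $e^{2s\vp(x,t_0)}$ yields exactly $e^{2s\al(x,t_0)}$ by the identity for $\al(x,t_0)$ recorded above. The factor cancels between the two sides of each inequality, so the generic constant $C$ is unaffected, and one reaches precisely the two estimates claimed in Theorem~\ref{thm:CEIP-sin}, valid for all $\la\ge\la_0$, $s\ge s_0$ and the stated function classes.

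Since every manipulation is purely algebraic once Theorem~\ref{thm:CEIP} is available, there is no genuine analytic difficulty in this passage; the only point demanding care is the bookkeeping of the parameter powers—verifying that each power of $s$ (and its reciprocal) attached to $\vp_0$ rescales into the matching power of $\vp(x,t_0)$, so that the replacement $s\vp_0\mapsto s\vp(x,t_0)$ is applied uniformly to the gradient terms, the zeroth-order terms, and the boundary terms of both estimates.
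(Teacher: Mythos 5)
Your proposal is correct and follows essentially the same route as the paper: the paper likewise rescales $s\mapsto s\,l^{-1}(t_0)$ in Theorem~\ref{thm:CEIP} (noting $s_1=s_0\,l(t_0)$ and $(s/l(t_0))\vp_0=s\vp(\cdot,t_0)$) and then multiplies both inequalities by the $x$-independent factor $\exp\{-2s\,e^{2\la\|d\|}/l(t_0)\}$ to convert the weight $e^{2s\vp(x,t_0)}$ into $e^{2s\al(x,t_0)}$. No discrepancies to report.
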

\subsection{Carleman estimates with a regular weight function}
Throughout this part, we use a regular weight function. Arbitrarily fix $t_0\in (0,T)$ and set $\delta := \min\{t_0, T-t_0 \}$. Then we select our weight function as
\begin{equation}
\label{con:choice-phi-psi}
\vp(x,t) = e^{\la \psi(x,t)}, \quad
\psi(x,t) = d(x) - \beta(t-t_0)^2 + c_0
\end{equation}
where $d$ is the same choice as \eqref{con:d}, parameter $\beta>0$ to be fixed later and $c_0:=\max\{\beta t_0^2, \beta (T-t_0)^2\}$ so that $\psi$ is always nonnegative in $Q$.

Similar to the last subsection, we intend to establish two key Carleman inequalities with this regular weight. One is for direct problem and the other is for inverse problem. 
Firstly, we consider the following linearized MHD system:
\begin{equation}
\label{sy:MHD4n}
\left\{
\begin{aligned}
&\pa_t u - \nu\De u + (B^{(1)}\cdot\na) u + (u\cdot\na) B^{(2)} + \na(B^{(3)}\cdot u) + L_1(H) + \na p = F &\quad in \ Q, \\
&\pa_t H - \kappa\De H + (D^{(1)}\cdot\na) H + (H\cdot\na) D^{(2)} + D^{(3)}\times \mathrm{rot}\; H + L_2(u) = G &\quad in \ Q, \\
& \mathrm{div}\; u = h, &\quad in \ Q
\end{aligned}
\right.
\end{equation}
which is exactly system \eqref{sy:MHD4}. For simplicity, we define
\begin{align*}
\|(u,p,H)\|_{\sigma_s(Q)}^2 := \int_Q \bigg\{ &\frac{1}{s\vp}\bigg( |\pa_t u|^2 + \sum_{i,j=1}^3 |\pa_i \pa_j u|^2\bigg) + s\vp|\na u|^2 + s^3\vp^3|u|^2 + |\na p|^2 + s^2\vp^2|p|^2 \\
& + \frac{1}{s\vp}\bigg( |\pa_t H|^2 + \sum_{i,j=1}^3 |\pa_i \pa_j H|^2\bigg) + s\vp|\na H|^2 + s^3\vp^3|H|^2 \bigg\}e^{2s\vp} dxdt.
\end{align*}
Then we have the first Carleman estimate:
\begin{thm}
\label{thm:CEDPn}
Let $d\in C^2(\ov{\Om})$ satisfy \eqref{con:d} and $F,G\in L^2(Q)$. Then for large fixed $\lambda$, there exist constants $s_0>0$ and $C>0$ such that
\begin{align*}
\|&(u,p,H)\|_{\sigma_s(Q)}^2 \le \;C\int_Q s\vp\big(|F|^2 + |G|^2\big) e^{2s\vp}dxdt + C\int_Q s\vp|\na_{x,t} h|^2e^{2s\vp}dxdt \\
&\hspace{1cm} + Ce^{Cs}\Big(\|u\|_{L^2(\Sigma)}^2  + \|\na_{x,t} u\|_{L^2(\Sigma)}^2 + \|H\|_{L^2(\Sigma)}^2 + \|\na_{x,t} H\|_{L^2(\Sigma)}^2 + \|p\|_{L^2(0,T;H^{\frac{1}{2}}(\pa \Om))}^2 \Big)
\end{align*}
for all $s\ge s_0$ and all $(u,p,H)$ smooth enough and satisfying the system (\ref{sy:MHD4n}) with the conditions
\begin{align}
\label{con:0,T}
u(\cdot,0) = u(\cdot,T) = H(\cdot,0) = H(\cdot,T) = 0.
\end{align}
\end{thm}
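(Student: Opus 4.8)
The plan is to reproduce the three-step scheme from the proof of Theorem \ref{thm:CEDP}, now replacing the singular weight by the regular weight $\vp=e^{\la\psi}$ of \eqref{con:choice-phi-psi}. Three features force the modifications. Since $\mathrm{div}\, u=h\neq0$, applying $\mathrm{div}$ to the momentum equation leaves residual terms in $h$, which is the source of the forcing $\int_Q s\vp|\na_{x,t}h|^2e^{2s\vp}dxdt$. Because $\vp$ is bounded (rather than tending to $-\infty$ at $t=0,T$), the lateral-boundary prefactor weakens from $e^{-s}$ to $e^{Cs}$. Finally, the temporal endpoint integrals produced by integration by parts in $t$ are eliminated exactly by \eqref{con:0,T}.

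\textbf{Step 1: the pressure.} First I would estimate $p$. Applying $\mathrm{div}$ to the first equation of \eqref{sy:MHD4n} and using $\mathrm{div}\, u=h$ gives, weakly and at each fixed $t$,
\begin{equation*}
\De p=\mathrm{div}\, F-\pa_t h+\nu\De h-\mathrm{div}\big(L_1(H)+(B^{(1)}\!\cdot\!\na)u+(u\!\cdot\!\na)B^{(2)}+\na(B^{(3)}\!\cdot\! u)\big)\quad\text{in }\Om,
\end{equation*}
where $\pa_t h$ enters as an $f_0$-type source and $\nu\De h=\nu\,\mathrm{div}(\na h)$ as an $f_j$-type source in the sense of Lemma \ref{lem:Ce-H1-zero-bdy}, together accounting for $\na_{x,t}h$. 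As in Step 1 of Theorem \ref{thm:CEDP} I would subtract a trace extension $\wt p\in H^1(\Om)$ with $\wt p=p$ on $\pa\Om$ and $\|\wt p\|_{H^1(\Om)}\le C\|p\|_{H^{1/2}(\pa\Om)}$, set $q=p-\wt p$, and apply Lemma \ref{lem:Ce-H1-zero-bdy} to $q$. Since at fixed $t$ one has $\vp(x,t)=e^{\la d(x)}e^{\la(c_0-\be(t-t_0)^2)}$, the spatial weight coincides with that of Lemma \ref{lem:Ce-H1-zero-bdy} up to a bounded time factor, so the estimate applies slice by slice under the substitution $s\mapsto s\,e^{\la(c_0-\be(t-t_0)^2)}$ and is then integrated in $t$, just as $s\mapsto sl^{-1}(t)$ was used in the singular case.

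\textbf{Step 2: the parabolic estimates.} Next I would read $u$ and $H$ off parabolic equations. Writing $\pa_t u-\nu\De u=F-\na p-L_1(H)-(B^{(1)}\!\cdot\!\na)u-(u\!\cdot\!\na)B^{(2)}-\na(B^{(3)}\!\cdot\! u)$ and applying componentwise a parabolic Carleman estimate with the regular weight \eqref{con:choice-phi-psi} (the counterpart of Lemma \ref{lem:Ce-sin-para}) bounds the $u$-part of $\|(u,p,H)\|_{\sigma_s(Q)}^2$, including $s^3\vp^3|u|^2$ and $s\vp|\na u|^2$, by $C\int_Q(|F|^2+|\na p|^2+|\na H|^2+|H|^2)e^{2s\vp}dxdt$ plus lateral-boundary terms carrying the prefactor $e^{Cs}$; here the vanishing conditions \eqref{con:0,T} ensure the $t=0,T$ boundary integrals drop out. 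The induction equation is treated identically, controlling the $H$-part by $C\int_Q(|G|^2+|\na u|^2+|u|^2)e^{2s\vp}dxdt$ plus boundary data.

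\textbf{Step 3: combination and the main obstacle.} The decisive step is the combination. Adding the pressure, velocity and magnetic estimates, the source $\int_Q|\na p|^2e^{2s\vp}dxdt$ on the right of the $u$-estimate is precisely supplied by Step 1 (which also delivers $s^2\vp^2|p|^2$), while the coupling terms $|\na u|^2,|u|^2,|\na H|^2,|H|^2$ carry strictly fewer powers of $\la$ than the corresponding weighted quantities on the left, exactly as in Lemma \ref{lem:Ce-sin-para} and Lemma \ref{lem:Ce-H1-zero-bdy}. Fixing $\la$ large then absorbs all these lower-order terms into the left-hand side, after which $s\ge s_0$ is taken large; the inequality $s\vp\ge1$ lets one pass from weight $1$ to $s\vp$ on the forcing, producing the stated $\int_Q s\vp(|F|^2+|G|^2)e^{2s\vp}dxdt$ and $\int_Q s\vp|\na_{x,t}h|^2e^{2s\vp}dxdt$. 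I expect the main obstacle to be the bookkeeping of the $s$- and $\la$-powers, guaranteeing that every term generated by the coupling and by the nonzero divergence $h$ is strictly dominated, together with verifying that the regular-weight elliptic and parabolic Carleman estimates hold in the required form with the temporal endpoint terms vanishing under \eqref{con:0,T}; the weakened $e^{Cs}$ boundary factor is harmless, since those terms are retained on the right rather than absorbed.
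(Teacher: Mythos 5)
Your overall scheme is the same as the paper's: pressure first via the $H^{-1}$ elliptic Carleman estimate (Lemma \ref{lem:Ce-H1-zero-bdy}) after a trace extension, then the regular-weight parabolic Carleman estimate (Lemma \ref{lem:Ce-reg-para}) applied componentwise to $u$ and $H$ with \eqref{con:0,T} killing the temporal endpoint terms, the slicewise substitution $s\mapsto se^{\la(c_0-\be(t-t_0)^2)}$, and absorption of the coupling terms by fixing $\la$ large before sending $s$ large. However, there is one concrete gap in your Step 1: your identity for $\De p$ is computed as if $\nu$ were constant. Applying $\mathrm{div}$ to the momentum equation gives $\mathrm{div}(\nu\De u)=\nu\De h+\na\nu\cdot\De u$, and you have dropped the commutator term $\na\nu\cdot\De u$. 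Since the theorem allows $\nu\in W^{1,\infty}(Q)$ non-constant --- and in the intended application $\nu$ plays the role of the spatially varying viscosity one is trying to recover --- this term is genuinely present.

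The term matters because it contains second derivatives of $u$, which are not in the admissible source form $f_0+\sum_{j}\pa_j f_j$ of Lemma \ref{lem:Ce-H1-zero-bdy} with the right-hand sides you list. The paper resolves this by rewriting
$$
\na\nu\cdot\De u=\sum_{i,j=1}^3\pa_j\big((\pa_i\nu)\pa_j u^i\big)-\sum_{i,j=1}^3(\pa_i\pa_j\nu)\,\pa_j u^i,
$$
so that both the $f_j$-part and the $f_0$-part involve only first derivatives of $u$; these are then dominated by the $s\vp(|\na u|^2+|u|^2+\cdots)$ coupling term and absorbed with large $\la$, exactly like the other coupling terms you handle in Step 3. (Alternatively one could keep $\na\nu\cdot\De u$ as an $f_0$ source and absorb the resulting $\frac{C}{s\la^2\vp}|\De u|^2$ into $\frac{1}{s\vp}\sum_{i,j}|\pa_i\pa_j u|^2$ on the left after the combination step, using the $1/\la^2$ gain; but either way the term must be tracked, and your bookkeeping does not show it.) A smaller imprecision of the same kind: $\nu\De h$ is not itself in divergence form, so one should write it as $\mathrm{div}(\nu\na h)-\na\nu\cdot\na h$ (the paper puts $\nu\na h$ inside the divergence), which is harmless since both pieces involve only $\na h$ and are covered by the stated $|\na_{x,t}h|^2$ forcing. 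With these corrections your argument goes through and coincides with the paper's proof.
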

\noindent $\mathbf{Remarks.}$ (\romannumeral1) There is a confusion for $\|p\|_{L^2(\Om)}$ because $p$ can be changed up to a constant. Therefore, in this article, we actually mean $\inf_{c\in\Bbb{R}}\|p+c\|_{L^2(\Om)}$ while we just write $\|p\|_{L^2(\Om)}$.

(\romannumeral2) In this article, $C$ usually denotes generic positive constant which depends on $T,\Om$ and the coefficients but is independent of large parameter $s$ and $\la$ as well. However, $\lambda$ plays an important role in the proof of Carleman estimate. And so while the generic constant $C$ depends on $\la$, we use notation $C(\la)$ to indicate the dependence.
\vspace{0.2cm}

We prove Theorem \ref{thm:CEDP} by some techniques and combinations of Carleman estimates. Our key point is the estimate of pressure $p$. Thanks to the paper of $H^{-1}$- Carleman estimate for elliptic type (see Imanuvilov and Puel \cite{IP03}), we are able to establish the Carleman estimate with boundary data by a simple extension.

\begin{proof}[Proof of Theorem \ref{thm:CEDP}]
We divide the proof into three steps.
\vspace{0.2cm}

\noindent $\mathbf{First\ step.}$ We prove a Carleman estimate for pressure $p$ with boundary data.

We apply operator div to the first equation in (\ref{sy:MHD4n}). Formal calculation leads to
\begin{align*}
&\Delta p = \mathrm{div} \big(F + \nu \na h - L_1(H) - (B^{(1)}\cdot\na)u - (u\cdot\na)B^{(2)} - \na(B^{(3)}\cdot u)\big) - \pa_t h \\
&\hspace{2cm} + \sum_{i,j=1}^3\pa_j ((\pa_i\nu) \pa_j u^i) - \sum_{i,j=1}^3 (\pa_i\pa_j\nu) \pa_j u^i
\end{align*}
By Sobolev Trace Theorem, there exists $\wt{p}\in H^1(\Om)$ such that
$$
\wt{p} = p \quad on \ \pa\Om
$$
and
\begin{equation}
\label{eq:Stt1n}
\|\wt{p}\|_{H^1(\Om)} \le C\|\wt{p}\|_{H^{\frac{1}{2}}(\pa\Om)} = C\|p\|_{H^{\frac{1}{2}}(\pa\Om)}.
\end{equation}
We then set
$$
q = p - \wt{p} \quad in \ \Om.
$$
Thus we have
\begin{equation}
\label{sy:prf-ellip-ext-zero-bdyn}
\left\{
\begin{aligned}
&\ \Delta q = \Delta p - \mathrm{div}(\na \wt{p}) &\qquad in \ \Om,\ \ \\
&\ q = 0 &\qquad on \ \pa\Om.
\end{aligned}
\right.
\end{equation}
Applying Lemma \ref{lem:Ce-H1-zero-bdy} to (\ref{sy:prf-ellip-ext-zero-bdyn}), we obtain
\begin{align*}
\int_\Om &\big( |\na q|^2 + s^2\la^2e^{2\la d}|q|^2\big) e^{2se^{\la d}}dx \le \; C\int_\Om se^{\la d}|F|^2e^{2se^{\la d}}dx + C\int_\Om \f{1}{s\la^2}e^{-\la d}(|\pa_t h|^2 + |\na u|^2)e^{2se^{\la d}}dx \\
&\hspace{2cm} + C\int_\Om se^{\la d}(|\na h|^2 + |\na u|^2 + |u|^2 + |\na H|^2 + |H|^2 + |\na \wt{p}|^2)e^{2se^{\la d}}dx
\end{align*}
for all $\la\ge \la_0$ and all $s\ge s_0$.
Since $p = q + \wt{p}$, we have
\begin{equation}
\label{eq:prf-ce-1n}
\begin{aligned}
\int_\Om \big( |\na p|^2 &+ s^2\la^2e^{2\la d}|p|^2\big) e^{2se^{\la d}}dx \\
\le \; &2\int_\Om \big( |\na q|^2 + s^2\la^2e^{2\la d}|q|^2\big) e^{2se^{\la d}}dx + 2\int_\Om \big( |\na \wt{p}|^2 + s^2\la^2e^{2\la d}|\wt{p}|^2\big) e^{2se^{\la d}}dx \\
\le \; &C\int_\Om se^{\la d}(|F|^2 + |\na h|^2)e^{2se^{\la d}}dx + Cs^2\la^2e^{2\la \|d\|_{C(\ov{\Om})}}e^{2se^{\la \|d\|_{C(\ov{\Om})}}}\|p\|_{H^{\frac{1}{2}}(\pa\Om)}^2 \\
&+ C\int_\Om \f{1}{s\la^2}e^{-\la d}|\pa_t h|^2e^{2se^{\la d}}dx + C\int_\Om se^{\la d}(|\na u|^2 + |u|^2 + |\na H|^2 + |H|^2)e^{2se^{\la d}}dx \\
\end{aligned}
\end{equation}
for all $\la\ge \la_0$ and all $s\ge s_0$. We used (\ref{eq:Stt1n}) in the last inequality.

Recall the definition of weight function \eqref{con:choice-phi-psi}. Since $se^{\la(-\beta(t-t_0)^2+c_0)}\ge s$ for all $0\le t\le T$. Hence substituting $s$ by $se^{\la(-\beta(t-t_0)^2+c_0)}$ in (\ref{eq:prf-ce-1n}) yields
\begin{align}
\nonumber
\int_\Om \big( |\na p|^2 &+ s^2\la^2\vp^2|p|^2\big) e^{2s\vp}dx \le \; C\int_\Om s\vp(|F|^2 + |\na h|^2)e^{2s\vp}dx + C\int_\Om \f{1}{s\la^2\vp}|\pa_t h|^2e^{2s\vp}dx \\
\label{eq:Ce-reg-p}
&+ C\int_\Om s\vp(|\na u|^2 + |u|^2 + |\na H|^2 + |H|^2)e^{2s\vp}dx + C(\la)s^2\e^{C(\la)s}\|p\|_{H^{\frac{1}{2}}(\pa\Om)}^2
\end{align}
for all $\la \ge \la_0$ and all $s\ge s_0$. 
\vspace{0.2cm}

\noindent $\mathbf{Second\ step.}$ We apply a Carleman estimate for parabolic type.

We have the following lemma.
\begin{lem}
\label{lem:Ce-reg-para}
Let $\vp$ be chosen as (\ref{con:choice-phi-psi}) and $y\in H^{2,1}(Q)$ satisfy
$$
\left\{
\begin{aligned}
& \quad \pa_t y - \nu(x,t)\De y + \sum_{j=1}^3 b_j(x,t)\pa_j y + c(x,t)y = f &\quad in \; Q \\
& \quad y(\cdot, 0) = y(\cdot, T) = 0 &\quad in \; \Om 
\end{aligned}
\right.
$$
with $\nu,b_j,c\in W^{1,\infty}(Q)$, $\nu\ge c_0>0$ and $f\in L^2(Q)$, $j=1,2,3$. Then there exist  constants $\la_0>0$, $s_0>0$ and $C>0$ such that
\begin{equation}
\begin{aligned}
\int_Q \bigg\{ \frac{1}{s\vp}\bigg( |\pa_t y|^2 + \sum_{i,j=1}^3 &|\pa_i \pa_j y|^2 \bigg) + s\la^2\vp|\na y|^2 + s^3\la^4\vp^3 |y|^2\bigg\} e^{2s\vp}dxdt \\
\le \; &C\int_Q |f|^2e^{2s\vp}dxdt + C(\la)e^{C(\la)s}\int_{\Sigma} (|y|^2 + |\na_{x,t} y|^2)dSdt
\end{aligned}
\end{equation}
for all $\la\ge \hat{\la}$ and all $s\ge \hat{s}$.
\end{lem}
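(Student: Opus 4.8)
The plan is to follow the standard Fursikov--Imanuvilov scheme for parabolic Carleman estimates, adapted to the regular weight $\vp=e^{\la\psi}$ of \eqref{con:choice-phi-psi}. First I would reduce to the principal part: since $\sum_j b_j\pa_j y$ and $cy$ are lower order, they can be moved to the right-hand side and absorbed at the end by taking $s,\la$ large, so it suffices to treat $\pa_t y-\nu\De y=\wt f$ with $\wt f=f-\sum_j b_j\pa_j y-cy$. I would then conjugate by the weight, setting $w:=e^{s\vp}y$, and compute the conjugated operator $P_s w:=e^{s\vp}(\pa_t-\nu\De)(e^{-s\vp}w)$. Because $\psi$ is bounded on $\ov Q$ and $y(\cdot,0)=y(\cdot,T)=0$, the function $w$ inherits $w(\cdot,0)=w(\cdot,T)=0$, which is exactly the temporal boundary condition needed below.

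Next I would split $P_s w=P_1 w+P_2 w+Rw$ into a formally self-adjoint part $P_1$, a skew-adjoint part $P_2$, and a remainder $R$ of lower order in $s,\la$. Schematically, $P_1 w$ collects $-\nu\De w$ and the term $-s^2\la^2\nu\vp^2|\na d|^2 w$, while $P_2 w$ collects $\pa_t w$ and the transport term $2s\la\nu\vp\,\na d\cdot\na w$. Then $\|\wt f e^{s\vp}\|_{L^2(Q)}^2=\|P_1 w+P_2 w+Rw\|^2$, and after discarding $\|P_1 w\|^2+\|P_2 w\|^2\ge 0$ and controlling $Rw$, the heart of the matter is the cross term $2(P_1 w,P_2 w)_{L^2(Q)}$. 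I would expand this inner product and integrate by parts, using $w(\cdot,0)=w(\cdot,T)=0$ to kill all temporal boundary contributions and keeping the spatial boundary integrals over $\Sigma=\pa\Om\times(0,T)$. The dominant bulk terms produced are, up to positive constants and absorbable error, $s^3\la^4\int_Q \vp^3|\na d|^4|w|^2\,dxdt$ and $s\la^2\int_Q \vp|\na d|^2|\na w|^2\,dxdt$; since $|\na d|>0$ on $\ov\Om$ by \eqref{con:d}, these are positive for $\la$ large, which is precisely where the gradient condition on $d$ is used.

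Having isolated these positive bulk terms, I would absorb $Rw$, the lower-order commutators, and the contributions of $\sum_j b_j\pa_j y+cy$ by taking $s,\la$ sufficiently large, and then translate back from $w$ to $y$, which reinstates the weight $e^{2s\vp}$ and yields the parts $s\la^2\vp|\na y|^2$ and $s^3\la^4\vp^3|y|^2$ of the left-hand side. To recover the second-order terms $\tfrac{1}{s\vp}(|\pa_t y|^2+\sum_{i,j}|\pa_i\pa_j y|^2)$, I would return to the equation: $\nu\De y=\pa_t y+\sum_j b_j\pa_j y+cy-f$ controls $\De y$ from the already-estimated lower-order quantities and $f$, the individual second spatial derivatives follow by elliptic regularity with the extra $\Sigma$-boundary terms retained on the right-hand side, and dividing by $s\vp$ produces the stated weights. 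Finally, every spatial boundary integral over $\Sigma$ is kept on the right: since $\vp\le e^{\la\|\psi\|_{C(\ov Q)}}$ on $\ov Q$, there $e^{2s\vp}\le e^{C(\la)s}$, giving the factor $C(\la)e^{C(\la)s}$ in front of $\int_\Sigma(|y|^2+|\na_{x,t}y|^2)$.

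The main obstacle is the cross-term computation: carrying out the integration by parts in $2(P_1 w,P_2 w)$ so that the genuinely dominant terms emerge with the correct positive powers of $s$ and $\la$ and with the sign fixed by $|\na d|^2>0$, while bookkeeping the numerous lower-order and boundary contributions carefully enough that nothing but the claimed $\Sigma$-integrals survives. This is routine but delicate, and is exactly the step whose analogue is cited from Chae--Imanuvilov--Kim \cite{CIK96} and Imanuvilov \cite{I95} for the singular-weight lemma; the only genuine difference here is that the temporal boundary terms are disposed of by the hypothesis $y(\cdot,0)=y(\cdot,T)=0$ rather than by the vanishing of the singular weight at $t=0,T$.
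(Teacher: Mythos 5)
Your overall strategy coincides with the one the paper itself relies on: the paper gives no self-contained proof of Lemma \ref{lem:Ce-reg-para}, but simply invokes Theorem 3.2 of Yamamoto \cite{Y09}, whose proof is exactly the Fursikov--Imanuvilov scheme you outline --- conjugation $w=e^{s\vp}y$, splitting of the conjugated operator into symmetric and skew-symmetric parts, integration by parts in the cross term with positivity coming from $|\na d|>0$ on $\ov\Om$, and disposal of the temporal boundary terms by the hypothesis $y(\cdot,0)=y(\cdot,T)=0$ (this last observation, replacing the vanishing of the singular weight at $t=0,T$, is correctly identified as the only structural difference from Lemma \ref{lem:Ce-sin-para}).

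There is, however, one genuine gap, located at the recovery of the maximal-regularity terms $\f{1}{s\vp}\big(|\pa_t y|^2+\sum_{i,j}|\pa_i\pa_j y|^2\big)$. You propose to \emph{discard} $\|P_1w\|^2+\|P_2w\|^2$ and retain only the cross term $2(P_1w,P_2w)$; but the cross term produces only the zeroth- and first-order bulk quantities $s^3\la^4\vp^3|w|^2$ and $s\la^2\vp|\na w|^2$. You then claim that the equation $\nu\De y=\pa_t y+\sum_j b_j\pa_j y+cy-f$ ``controls $\De y$ from the already-estimated lower-order quantities and $f$'' --- this is false as stated, because $\pa_t y$ is not an already-estimated lower-order quantity: it carries the same weight $\f{1}{s\vp}$ as the second spatial derivatives and has not been estimated at that stage. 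As written the step is circular: the equation controls only the combination $\pa_t y-\nu\De y$, so $\De y$ is bounded only if $\pa_t y$ is, and vice versa. The standard scheme (and the proof cited by the paper) avoids this precisely by \emph{keeping} the two squares: from $\|P_1w+P_2w\|^2=\|P_1w\|^2+\|P_2w\|^2+2(P_1w,P_2w)$ one retains $\|P_2w\|^2$, which controls $\pa_t w$ modulo the transport term $2s\la\nu\vp\,\na d\cdot\na w$ (absorbed by the gradient term already obtained), and $\|P_1w\|^2$, which controls $\nu\De w$ modulo $s^2\la^2\nu\vp^2|\na d|^2 w$ (absorbed by the zeroth-order term); the terms $\pa_i\pa_j y$ then follow. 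Alternatively, your version can be repaired by an additional multiplier argument (multiply the equation by $\f{1}{s\vp}\De y\,e^{2s\vp}$, integrate by parts in $x$ and in $t$ using $y(\cdot,0)=y(\cdot,T)=0$, and absorb the resulting $\varepsilon$-term of $\f{1}{s\vp}|\pa_t y|^2$), but some such extra step is indispensable. A secondary point of care: passing from $\int|\De y|^2$ to $\int\sum_{i,j}|\pa_i\pa_j y|^2$ by per-time-slice ``elliptic regularity'' generates boundary contributions that are not obviously dominated by $\int_{\Sigma}(|y|^2+|\na_{x,t}y|^2)\,dSdt$ (nonhomogeneous Dirichlet data enter through an $H^{3/2}(\pa\Om)$-norm), so this reduction must be handled as in \cite{IPY09} rather than quoted as a black box.
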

The proof is almost the same to Therorem 3.2 in Yamamoto \cite{Y09}. 
\vspace{0.2cm}

We rewrite the first equation in (\ref{sy:MHD4n}) to get
\begin{equation*}
\pa_t u - \nu \Delta u + (B^{(1)}\cdot\na)u + (u\cdot\na)B^{(2)} + \na(B^{(3)}\cdot u)= F - \na p - L_1(H).
\end{equation*}
Applying Lemma \ref{lem:Ce-reg-para} to each component of above equations, we obtain
\begin{equation}
\label{eq:Ce-reg-u}
\begin{aligned}
&\int_Q \bigg\{ \frac{1}{s\vp}\bigg( |\pa_t u|^2 + \sum_{i,j=1}^3 |\pa_i \pa_j u|^2 \bigg) + s\la^2\vp|\na u|^2 + s^3\la^4\vp^3 |u|^2\bigg\} e^{2s\vp}dxdt \le C\int_Q |F|^2e^{2s\vp}dxdt \\
&\hspace{1cm} + C\int_Q (|\na p|^2 + |\na H|^2 + |H|^2)e^{2s\vp}dxdt + C(\la)e^{C(\la)s}\big(\|u\|_{L^2(\Sigma)}^2 + \|\na_{x,t} u\|_{L^2(\Sigma)}^2\big)
\end{aligned}
\end{equation}
for all $\la\ge \la_1$ and all $s\ge s_1$.

Next, we apply Carleman estimate of parabolic type to the second equation of (\ref{sy:MHD4n}) and we have the following estimate:
\begin{equation}
\label{eq:Ce-reg-H}
\begin{aligned}
&\int_Q \bigg\{ \frac{1}{s\vp}\bigg( |\pa_t H|^2 + \sum_{i,j=1}^3 |\pa_i \pa_j H|^2 \bigg) + s\la^2\vp |\na H|^2 + s^3\la^4\vp^3 |H|^2\bigg\} e^{2s\vp}dxdt \le C\int_Q |G|^2e^{2s\vp}dxdt \\
&\hspace{1cm} + C\int_Q (|\na u|^2 + |u|^2)e^{2s\vp}dxdt + C(\la)e^{C(\la)s}\big(\|H\|_{L^2(\Sigma)}^2 + \|\na_{x,t} H\|_{L^2(\Sigma)}^2\big)
\end{aligned}
\end{equation}
for all $\la\ge \la_2$ and all $s\ge s_2$. 
\vspace{0.2cm}

\noindent $\mathbf{Third\ step.}$ We combine the estimates for $p$, $u$ and $H$.

Combining (\ref{eq:Ce-reg-p}), (\ref{eq:Ce-reg-u}) and (\ref{eq:Ce-reg-H}), we obtain
$$
\begin{aligned}
& \int_Q \bigg\{ \frac{1}{s\vp}\bigg( |\pa_t u|^2 + \sum_{i,j=1}^3 |\pa_i \pa_j u|^2\bigg) + s\la^2\vp|\na u|^2 + s^3\la^4\vp^3|u|^2 + |\na p|^2 + s^2\la^2\vp^2 |p|^2 \\
&\hspace{1cm} + \frac{1}{s\vp}\bigg( |\pa_t H|^2 + \sum_{i,j=1}^3 |\pa_i \pa_j H|^2\bigg) + s\la^2\vp |\na H|^2 + s^3\la^4\vp^3 |H|^2 \bigg\}e^{2s\vp}dxdt \\
& \le \;C\int_Q (s\vp (|F|^2 + |\na h|^2)+ |G|^2 + \f{1}{s\vp}|\pa_t h|^2) e^{2s\vp}dxdt + C\int_Q s\vp\big( |\na u|^2 + |u|^2 + |\na H|^2 + |H|^2 \big) e^{2s\vp}dxdt \\
&\hspace{1cm} + C(\la)s^2e^{C(\la)s}\bigg(\|u\|_{L^2(\Sigma)}^2 + \|\na_{x,t} u\|_{L^2(\Sigma)}^2 + \|H\|_{L^2(\Sigma)}^2 + \|\na_{x,t} H\|_{L^2(\Sigma)}^2 + \|p\|_{L^2(0,T;H^{\frac{1}{2}}(\pa\Om))}^2 \bigg)
\end{aligned}
$$
for all $\la\ge \la_3:=\max\{\la_1,\la_2,\la_3\}$ and all $s\ge s_3:=\max\{s_0,s_1,s_2\}$. Finally we can fix $\la$ large enough to absorb the second term on the right-hand side into the left-hand side. By the relations $\la \ge 1$ and $s^2\le e^{Cs}$ for $s$ large, we obtain
\begin{align*}
&\|(u,p,H)\|_{\sigma_s(Q)}^2 \le \;C\int_Q (s\vp |F|^2 + s\vp|\na h|^2 + |G|^2 + \f{1}{s\vp}|\pa_t h|^2) e^{2s\vp}dxdt \\
&\hspace{1cm} + Ce^{Cs}\bigg(\|u\|_{L^2(\Sigma)}^2 + \|\na_{x,t} u\|_{L^2(\Sigma)}^2 + \|H\|_{L^2(\Sigma)}^2 + \|\na_{x,t} H\|_{L^2(\Sigma)}^2 + \|p\|_{L^2(0,T;H^{\frac{1}{2}}(\pa\Om))}^2 \bigg)
\end{align*}
for fixed $\la$ large enough and all $s \ge s_3$.
\vspace{0.2cm}

The proof of Theorem \ref{thm:CEDPn} is completed.
\end{proof}

On the other hand, we investigate the following two first-order partial differential operators: 
\vspace{0.2cm}

($\romannumeral1)\quad P f := \mathrm{div} (fA)=A \na f + f\mathrm{div} A, \quad f\in H^1(\Om)$, 
\vspace{0.2cm}

($\romannumeral2)\quad Q g := \mathrm{rot} (gb)=\na g\times b + g\mathrm{rot} b, \quad g\in H^1(\Om)$
\vspace{0.2cm}

\noindent where  $A=(A_{ij})_{i,j}$ is a $3\times 3$ matrix and $b=(b_1,b_2,b_3)^T$ is a vector satisfying $A\in W^{1,\infty}(\Om), b\in W^{2,\infty}(\Om)$. Recall that the divergence of a matrix is defined as $[\mathrm{div} A]_k=\sum_{j=1}^3 \pa_j A_{kj}$. In addition, we select an open subset $O\subset \Om$. 
Then we have the following Carleman inequalities: 
\begin{thm}
\label{thm:CEIPn}
Let $d$ be chosen as (\ref{con:d}) and $\vp_0:=e^{\la d}$. Assume that 
$$
\mathrm{det} A(x) \neq 0 \ and \ |\na d(x) \times b(x)|\neq 0, \qquad for\ x\in \ov{O}.
$$
Then there exist constants $\la_0\ge 1$, $s\ge 1$ and a generic constant $C>0$ such that
\begin{equation}
\label{CEIP1n}
\int_{O} (|\na f|^2 + s^2\la^2\vp_0^2 |f|^2)e^{2s\vp_0} dx \le C\int_{O} |P f|^2 e^{2s\vp_0}dx + C\int_{\pa O} s\la\vp_0 |f|^2e^{2s\vp_0}d\sigma
\end{equation}
and
\begin{equation}
\label{CEIP2n}
\begin{aligned}
&\int_{O} (|\na g|^2 + s^2\la^2\vp_0^2 |g|^2)e^{2s\vp_0} dx \le C\int_{O} (\frac{1}{s^2\la^2\vp_0^2}|\na (Qg)|^2 + |Qg|^2) e^{2s\vp_0}dx \\
&\hspace{5cm} + C\int_{\pa O} (\frac{1}{s\la\vp_0}|\na g|^2 + s\la\vp_0 |g|^2)e^{2s\vp_0}d\sigma
\end{aligned}
\end{equation}
for all $\la\ge \la_0$, $s\ge s_0$ and $f\in H^1(\Om), g\in H^2(\Om)$. 
\end{thm}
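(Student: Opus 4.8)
The plan is to re-run the proof of Theorem \ref{thm:CEIP} essentially verbatim, working on the open set $O$ in place of $\Om$ and carrying the \emph{entire} boundary $\pa O$ wherever a boundary integral appears. The one structural change is this: the function $d$ from \eqref{con:d} was tailored to the pair $(\Om,\Gamma)$ so that $\pa_n d<0$ on $\pa\Om\setminus\Gamma$, and that sign was used to discard part of the boundary term and keep only $\Gamma$. On a generic subset $O$ there is no sign information on $\pa O$, so instead of throwing away any portion I would simply bound the boundary contribution in absolute value by $C\int_{\pa O} s\la\vp_0|f|^2 e^{2s\vp_0}\,d\sigma$ (respectively by the weighted $\pa O$-integral in \eqref{CEIP2n}). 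Note that the standing hypotheses $\det A\neq0$ and $|\na d\times b|\neq0$ are now required only on $\ov O$, while $|\na d|>0$ there is inherited from \eqref{con:d}.

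For the operator $P$ I would set $w=fe^{s\vp_0}$ and write $Pf\,e^{s\vp_0}$ in components as in \eqref{eq1}. Since $\det A\neq0$ on $\ov O$, I can choose $a\in L^\infty(O)$ with $\sum_k a_kA_{kj}=\pa_j d$; multiplying \eqref{eq1} by $a_k$ and summing gives $\sum_k a_k[Pf]_k e^{s\vp_0}=\na d\cdot\na w+(\cdots)w-s\la\vp_0|\na d|^2 w$. Squaring, integrating over $O$, and integrating the cross term by parts produces the positive principal contribution $\int_O s^2\la^2\vp_0^2|\na d|^4|w|^2\,dx$, a boundary integral over $\pa O$, and an extra factor of $\la$ arising from $\na\vp_0=\la\vp_0\na d$ that, for $\la$ large, dominates the lower-order interior terms. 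Using $|\na d|>0$ on $\ov O$ this yields the analogue of \eqref{eq2} with $\Gamma$ replaced by $\pa O$. To recover $|\na f|$ I would repeat the computation with the dual vectors $a^{(l)}$ satisfying $\sum_k a_k^{(l)}A_{kj}=\de_{lj}$ (again available because $\det A\neq0$), which extracts $\int_O|\pa_l w|^2$; summing over $l$, absorbing $\tfrac12\int_O|\na w|^2$ and the lower-order terms, and combining with the zeroth-order bound gives \eqref{CEIP1n}.

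For the operator $Q$ the obstruction is identical to that in Theorem \ref{thm:CEIP}: the associated skew matrix $B$ satisfies $\det B=0$, so the substitution trick used for $P$ is unavailable and one must argue directly. With $v=ge^{s\vp_0}$ and $Qg\,e^{s\vp_0}=B\na v+(\mathrm{rot}\,b)v-s\la\vp_0(B\na d)v$, I would expand $\int_O|Qg|^2e^{2s\vp_0}$, integrate the cross term by parts, and use $|B\na d|=|\na d\times b|\neq0$ on $\ov O$ together with large $\la$ to absorb everything but the principal term $\int_O s^2\la^2\vp_0^2|B\na d|^2|v|^2$, producing the analogue of \eqref{eq3} with the boundary integral taken over $\pa O$. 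Differentiating $Qg$ in $x_k$ and writing $g_k=\pa_k g$, the operator $Q_kg_k=\na g_k\times b+g_k\,\mathrm{rot}\,b$ has exactly the same structure, so the same estimate applied to each $g_k$ and divided through by $s^2\la^2\vp_0^2$ controls $\int_O|\na g|^2$ in terms of $\int_O(s^2\la^2\vp_0^2)^{-1}|\na(Qg)|^2$, a $\pa O$-boundary term, and lower-order pieces. Combining this gradient bound with the zeroth-order one and absorbing once more by taking $\la$ large yields \eqref{CEIP2n}.

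I expect the only genuinely delicate point to be the bookkeeping of the boundary integrals: without the sign of $\pa_n d$ the full $\pa O$ must be retained, and one must verify that every integration by parts in both the $P$- and the $Q$-argument leaves only terms that are either absorbed by the large-$\la$ principal term or dominated by the retained $\pa O$-integrals of the stated weights. Since all interior absorptions are literally those of Theorem \ref{thm:CEIP}, no hypothesis beyond $\det A\neq0$ and $|\na d\times b|\neq0$ on $\ov O$ is needed.
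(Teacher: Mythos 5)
Your proposal is correct and coincides essentially verbatim with the paper's own proof: the paper likewise reruns the argument of Theorem \ref{thm:CEIP} on $O$, retaining the full boundary integrals over $\pa O$ (bounded in absolute value, since the sign of $\pa d/\pa n$ is unavailable without the $(\Om,\Gamma)$-tailoring of $d$), with $\det A\neq 0$ and $|\na d\times b|\neq 0$ required only on $\ov O$ and the same $a$, $a^{(l)}$, $B$, and $Q_k$ computations. No gaps.
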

To proof these inequalities, we apply the idea of Lemma 6.1 in \cite{Y09}. 
\begin{proof}
We first prove inequality (\ref{CEIP1n}). Set $w=fe^{s\vp_0}$. Then 
$$
Pf=P(we^{-s\vp_0})=e^{-s\vp_0}(A\na w + w\mathrm{div} A - s\la\vp_0 (A\na d) w).
$$
We rewrite it in components, that is
\begin{equation}
\label{eq1n}
[Pf]_ke^{s\vp_0}=\sum_{j=1}^3 \big( A_{kj}\pa_j w + \pa_j A_{kj} w - s\la\vp_0 (A_{kj}\pa_j d)w\big)
\end{equation}
Now choose $a=(a_1,a_2,a_3)^T\in L^\infty (\Om)$ such that $\sum_{k=1}^3 a_k A_{kj} = \pa_j d$ for any $x\in \ov{O}$. In fact, the existence of such $\{a_k\}_{k=1,2,3}$ comes from the assumption $\mathrm{det} A\neq 0$ on $\ov{O}$. 

We multiply $a_k$ to equation (\ref{eq1}) and take summation over $k$:
$$
\sum_{k=1}^3 a_k[Pf]_k e^{s\vp_0} = \na d\cdot \na w + \big(\sum_{j,k=1}^3 a_k \pa_j A_{kj}\big) w - s\la\vp_0 |\na d|^2 w \qquad on \ \ov{O}. 
$$
Then we estimate
\begin{align*}
&\int_{O} \Big|\sum_{k=1}^3 a_k[Pf]_k \Big|^2e^{2s\vp_0}dx = \int_O s^2\la^2\vp_0^2|\na d|^4|w|^2 dx + \int_O |\na d \cdot \na w + (a\cdot \mathrm{div} A) w|^2 dx \\
&\hspace{4.3cm} - 2\int_O s\la\vp_0 |\na d|^2(\na d\cdot \na w + (a\cdot\mathrm{div}A) w)w dx \\
&\hspace{3.9cm}\ge \int_O s^2\la^2\vp_0^2|\na d|^4|w|^2 dx - 2\int_O s\la\vp_0 |\na d|^2 (a\cdot \mathrm{div} A)|w|^2 dx \\
&\hspace{4.3cm} - \int_{\pa O} s\la\vp_0 |\na d|^2 \frac{\pa d}{\pa n}|w|^2 d\sigma + \int_O s\la \mathrm{div} (\vp_0|\na d|^2 \na d)|w|^2 dx \\
&\hspace{3.9cm}\ge \int_O s^2\la^2\vp_0^2|\na d|^4|w|^2 dx - \int_{\pa O} s\la\vp_0 |\na d|^2 \frac{\pa d}{\pa n}|w|^2 d\sigma \\
&\hspace{4.3cm} + \int_O s\la\vp_0 \big(\la|\na d|^4 + \na |\na d|^2 \na d + |\na d|^2 (\Delta d - 2(a\cdot\mathrm{div} A))\big)|w|^2 dx. 
\end{align*}
By choosing $\la$ large, we can absorb the third term on the right-hand side. Thus,
\begin{equation}
\label{eq2n}
\int_O s^2\la^2\vp_0^2|f|^2e^{2s\vp_0} dx \le C \int_O |Pf|^2e^{2s\vp_0}dx + C \int_{\pa O} s\la\vp_0|f|^2e^{2s\vp_0} d\sigma
\end{equation} 
holds for all $\la\ge \la_1$ and $s\ge 1$. 

Furthermore, for $l=1,2,3$, we can also choose $a^{(l)}=(a_1^{(l)},a_2^{(l)},a_3^{(l)})^T\in L^\infty (\Om)$ such that $\sum_{k=1}^3 a_k^{(l)} A_{kj}=\delta_{lj}$ on $\ov{O}$. Take summation over $k$ after multiply $a_k^{(l)}$ to (\ref{eq1n}):
$$
\sum_{k=1}^3 a_k^{(l)}[Pf]_k e^{s\vp_0} = \pa_l w + \big(\sum_{j,k=1}^3 a_k^{(l)} \pa_j A_{kj}\big) w - s\la\vp_0 (\pa_l d) w \qquad on \ \ov{O}.
$$
Again we estimate
\begin{align*}
&\int_O \Big|\sum_{k=1}^3 a_k^{(l)}[Pf]_k \Big|^2e^{2s\vp_0}dx = \int_O |\pa_l w|^2 dx + \int_O |(a^{(l)}\cdot \mathrm{div} A) - s\la\vp_0(\pa_l d)|^2 |w|^2 dx \\
&\hspace{4.3cm} + 2\int_O \big((a^{(l)}\cdot \mathrm{div} A) - s\la\vp_0(\pa_l d)\big)w(\pa_l w) dx \\
&\hspace{3.9cm}\ge \int_O |\pa_l w|^2 dx + 2\int_O (a^{(l)}\cdot \mathrm{div} A)w(\pa_l w) dx \\
&\hspace{4.3cm} - \int_{\pa O} s\la\vp_0 (\pa_l d)n_l |w|^2 d\sigma + \int_O s\la\vp_0(\la|\pa_l d|^2 + \pa_l^2 d)|w|^2 dx.
\end{align*}
Rewrite the above inequality and take summation over $l$ on both sides:
\begin{align*}
&\int_O |\na w|^2 dx \le \int_O \sum_{l=1}^3\Big|\sum_{k=1}^3 a_k^{(l)}[Pf]_k \Big|^2e^{2s\vp_0}dx + \int_{\pa O} s\la\vp_0 \frac{\pa d}{\pa n} |w|^2 d\sigma \\
&\hspace{2.5cm} - 2\sum_{l=1}^3\int_O (a^{(l)}\cdot \mathrm{div} A)w(\pa_l w) dx - \int_O s\la\vp_0(\la|\na d|^2 + \Delta d)|w|^2 dx \\
&\hspace{1.8cm} \le C\int_O |Pf|^2e^{2s\vp_0}dx + \int_{\pa O} s\la\vp_0\frac{\pa d}{\pa n} |w|^2 d\sigma \\
&\hspace{2.5cm} + \frac{1}{2}\int_O |\na w|^2 dx + 2\int_O \sum_{l=1}^3 |a^{(l)}\cdot\mathrm{div} A|^2 |w|^2 dx + \int_O s\la\vp_0|w|^2 dx
\end{align*}
This leads to
$$
\int_O |\na w|^2 dx \le C\int_O |Pf|^2e^{2s\vp_0}dx + C\int_{\pa O} s\la\vp_0 |w|^2 d\sigma + C\int_O s\la\vp_0|w|^2 dx
$$
Together with (\ref{eq2n}) and take $\la$ large enough to absorb the last term on the right-hand side. Finally, we obtain
$$
\int_{O} (|\na f|^2 + s^2\la^2\vp_0^2 |f|^2)e^{2s\vp_0} dx \le C\int_{O} |P f|^2 e^{2s\vp_0}dx + C\int_{\pa O} s\la\vp_0 |f|^2e^{2s\vp_0}d\sigma
$$
for all $\la\ge \la_2$ and $s\ge 1$. 

Next we consider the operator $Q$. 
Set $v=ge^{s\vp_0}$. Then
$$
Qg=Q(ve^{-s\vp_0})=e^{-s\vp_0}(\na v\times b + (\mathrm{rot} b)v - s\la\vp_0(\na d\times b)v).
$$
By denoting
$$
B=
\begin{pmatrix}
0 & b_3 & -b_2 \\
-b_3 & 0 & b_1 \\
b_2 & -b_1 & 0 \\
\end{pmatrix},
$$
we rewrite the above formula:
$$
Qg e^{s\vp_0} = B\na v + (\mathrm{rot} b)v - s\la\vp_0(B\na d)v.
$$
However, $\mathrm{det} B = b_1b_2b_3 + (-b_1b_2b_3)=0$. Thus, we calculate directly
\begin{align*}
&\int_O |Qg|^2e^{2s\vp_0} dx = \int_O s^2\la^2\vp_0^2 |B\na d|^2 |v|^2 dx + \int_O |B\na v + (\mathrm{rot} b)v|^2 dx \\
&\hspace{2.7cm} -2\int_O s\la\vp_0 (B\na d)\cdot (B\na v + (\mathrm{rot} b)v)v dx \\
&\hspace{2.5cm} \ge \int_O s^2\la^2\vp_0^2 |B\na d|^2 |v|^2 dx -2\int_O s\la\vp_0(B\na d)\cdot (\mathrm{rot} b)|v|^2 dx \\
&\hspace{2.7cm} -\int_{\pa O} s\la\vp_0 (B\na d)\cdot (Bn)|v|^2 d\sigma + \int_O s\la\vp_0 \big(\la |B\na d|^2 + \mathrm{div}(B^T(B\na d))\big) |v|^2 dx
\end{align*}
By noting the assumption that $|B\na d|=|\na d\times b| \neq 0$ on $\ov{O}$, we can take $\la$ large to absorb the second and fourth terms on the right-hand side:
\begin{equation}
\label{eq3n}
\int_O s^2\la^2\vp_0^2 |g|^2e^{2s\vp_0} dx \le C\int_O |Qg|^2e^{2s\vp_0} dx + C\int_{\pa O}s\la\vp_0 |g|^2e^{2s\vp_0} d\sigma
\end{equation}
for all $\la\ge \la_3$ and $s\ge 1$. 

We take the $k$-th derivative of (\romannumeral2) and denote $g_k=\pa_k g$. Set
$$
Q_k g_k:= \pa_k(Qg) - \na g \times \pa_k b - g(\mathrm{rot}(\pa_k b))= \na g_k\times b + g_k(\mathrm{rot}b).
$$
By applying similar argument above to operator $Q_k$, we have 
\begin{align*}
&\int_O |g_k|^2e^{2s\vp_0} dx \le C\int_O \frac{1}{s^2\la^2\vp_0^2}|Q_k g_k|^2e^{2s\vp_0} dx + C\int_{\pa O} \frac{1}{s\la\vp_0} |g_k|^2e^{2s\vp_0} d\sigma \\
&\hspace{0cm} \le C\int_O \frac{1}{s^2\la^2\vp_0^2}|\pa_k (Qg)|^2e^{2s\vp_0} dx + C\int_O \frac{1}{s^2\la^2\vp_0^2}(|\na g|^2 +|g|^2)e^{2s\vp_0}dx + C\int_{\pa O}\frac{1}{s\la\vp_0} |g_k|^2e^{2s\vp_0} d\sigma
\end{align*}
for all $\la \ge \la_4$, $s\ge 1$ and $k=1,2,3$. Sum up the estimates over $k$ and absorb again the lower-order terms by taking $\la$ large:
\begin{equation}
\label{eq4n}
\begin{aligned}
&\int_O |\na g|^2e^{2s\vp_0} dx \le C\int_O \frac{1}{s^2\la^2\vp_0^2}|\na (Qg)|^2e^{2s\vp_0} dx + C\int_O \frac{1}{s^2\la^2\vp_0^2}|g|^2e^{2s\vp_0}dx \\
&\hspace{2.8cm}+ C\int_{\pa O} \frac{1}{s\la\vp_0} |\na g|^2e^{2s\vp_0} d\sigma
\end{aligned}
\end{equation}
for all $\la \ge \la_5$ and all $s\ge 1$. Combining (\ref{eq3n}) and (\ref{eq4n}), we proved (\ref{CEIP2n}) and also Theorem \ref{thm:CEIPn} with $\la_0=max\{\la_i:\ 1\le i\le 5\}$ and $s_0=1$.   

\end{proof}
Recall that our regular weight function is defined as
$$
\vp(x,t) = e^{\la\psi(x,t)}, \quad \psi(x,t) = d(x) - \beta(t-t_0)^2 +c_0.
$$
For all $s\ge s_0$, $se^{\la c_0}\ge s\ge s_0$. Then substituting $s$ by $se^{\la c_0}$ in \eqref{CEIP1n} and \eqref{CEIP2n} leads to
\begin{thm}
\label{thm:CEIP-reg}
Under the assumptions that 
$$
\mathrm{det} A(x) \neq 0 \ and \ |\na d(x) \times b(x)|\neq 0, \qquad for\ x\in \ov{O},
$$
there exist constants $\la_0\ge 1$, $s_0\ge 1$ and a generic constant $C>0$ such that
\begin{align*}
\int_{O} (|\na f|^2 + s^2\la^2\vp^2(x,t_0) |f|^2)e^{2s\vp(x,t_0)} dx \le C\int_{O} |P f|^2 e^{2s\vp(x,t_0)}dx + C\int_{\pa O} s\la\vp_0 |f|^2e^{2s\vp(x,t_0)}d\sigma
\end{align*}
and
\begin{align*}
&\int_{O} (|\na g|^2 + s^2\la^2\vp^2(x,t_0) |g|^2)e^{2s\vp(x,t_0)} dx \le C\int_{O} (\frac{1}{s^2\la^2\vp^2(x,t_0)}|\na (Qg)|^2 + |Qg|^2) e^{2s\vp(x,t_0)}dx \\
&\hspace{5cm} + C\int_{\pa O} (\frac{1}{s\la\vp(x,t_0)}|\na g|^2 + s\la\vp(x,t_0) |g|^2)e^{2s\vp(x,t_0)}d\sigma
\end{align*}
for all $\la\ge \la_0$, $s\ge s_0$ and $f\in H^1(\Om), g\in H^2(\Om)$. 
\end{thm}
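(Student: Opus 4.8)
The plan is to obtain Theorem \ref{thm:CEIP-reg} from Theorem \ref{thm:CEIPn} by a single reparametrization of the large parameter $s$, in exactly the same spirit in which Theorem \ref{thm:CEIP-sin} was deduced from Theorem \ref{thm:CEIP}. The two inequalities \eqref{CEIP1n} and \eqref{CEIP2n} are purely spatial estimates involving the frozen weight $\vp_0 = e^{\la d}$; all we must do is relate $\vp_0$ to the time-frozen regular weight $\vp(x,t_0)$ and then rescale $s$ accordingly.

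First I would evaluate the regular weight at the observation time. Since $\psi(x,t_0) = d(x) - \be(t_0-t_0)^2 + c_0 = d(x) + c_0$, it follows that
$$
\vp(x,t_0) = e^{\la\psi(x,t_0)} = e^{\la c_0}e^{\la d(x)} = e^{\la c_0}\vp_0(x).
$$
Thus the time-frozen regular weight differs from the spatial weight $\vp_0$ only by the constant multiplicative factor $e^{\la c_0}$, which is precisely the feature that makes the rescaling work.

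Next I would check that the substitution $s \mapsto se^{\la c_0}$ is admissible. Because $c_0 = \max\{\be t_0^2, \be(T-t_0)^2\} \ge 0$ and $\la \ge \la_0 \ge 1$, we have $e^{\la c_0}\ge 1$, so for every $s\ge s_0$ the rescaled value $se^{\la c_0}\ge s\ge s_0$ remains in the admissible range of Theorem \ref{thm:CEIPn}. Hence both \eqref{CEIP1n} and \eqref{CEIP2n} continue to hold after replacing $s$ by $se^{\la c_0}$, with the \emph{same} generic constant $C$ (which was independent of $s$ to begin with).

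Finally I would carry out the substitution term by term. Under $s\mapsto se^{\la c_0}$ the exponential weight transforms as $e^{2s\vp_0}\mapsto e^{2se^{\la c_0}\vp_0}=e^{2s\vp(x,t_0)}$, while the algebraic prefactors transform as $s^2\la^2\vp_0^2\mapsto s^2\la^2\vp^2(x,t_0)$, $\;s\la\vp_0\mapsto s\la\vp(x,t_0)$, and likewise $\tfrac{1}{s^2\la^2\vp_0^2}\mapsto \tfrac{1}{s^2\la^2\vp^2(x,t_0)}$ and $\tfrac{1}{s\la\vp_0}\mapsto \tfrac{1}{s\la\vp(x,t_0)}$. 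Inserting these identities into \eqref{CEIP1n} and \eqref{CEIP2n} reproduces verbatim the two displayed inequalities of the theorem, which finishes the argument. There is no genuine obstacle: the only point that requires care is the range check $se^{\la c_0}\ge s_0$, and this is immediate from $c_0\ge 0$.
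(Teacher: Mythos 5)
Your proposal is correct and is essentially identical to the paper's own derivation: the paper likewise observes that $\vp(x,t_0)=e^{\la(d(x)+c_0)}=e^{\la c_0}\vp_0(x)$ and obtains Theorem \ref{thm:CEIP-reg} by substituting $s\mapsto se^{\la c_0}$ in \eqref{CEIP1n} and \eqref{CEIP2n}, using $c_0\ge 0$ to keep the rescaled parameter admissible. The only (cosmetic) discrepancy is that this substitution yields $s\la\vp(x,t_0)$ in the boundary term of the first inequality, whereas the theorem as printed writes $s\la\vp_0$ there --- an inconsistency in the paper's own statement, not a gap in your argument.
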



\section{Proof of Theorem \ref{thm:stability} and \ref{thm:stabilityn}}
In this section, we prove our stability result(Theorem \ref{thm:stability},  \ref{thm:stabilityn}) in terms of the two types of Carleman inequalities established in the last section. 

First of all, we change our inverse coefficients problem to an inverse source problem. Recall that we have two sets of solutions $(u_i,p_i,H_i)$(i=1,2) satisfying the following MHD system:
\begin{equation}
\label{sy:MHD5}
\left\{
\begin{aligned}
&\pa_t u_i - \mathrm{div}(2\nu_i\mathcal{E}(u_i)) + (u_i\cdot\nabla) u_i - (H_i\cdot\nabla) H_i + \na p_i - \nabla H_i^T\!\cdot\! H_i = 0 &\quad in \ Q, \\
&\pa_t H_i + \mathrm{rot}(\kappa_i\mathrm{rot}\; H_i) + (u_i\cdot\nabla) H_i - (H_i\cdot\nabla) u_i = 0 &\quad in \ Q, \\
& \mathrm{div}\; u_i = 0, \quad \mathrm{div}\; H_i = 0 &\quad in \ Q.
\end{aligned}
\right.
\end{equation}
Take the difference of the two sets of equations in (\ref{sy:MHD5}). By setting $u = u_1 - u_2$, $H = H_1 - H_2$, $p = p_1 - p_2$ and $\nu = \nu_1 - \nu_2$, $\kappa = \kappa_1 - \kappa_2$, we obtain
\begin{equation}
\label{sy:MHD6}
\left\{
\begin{aligned}
&\pa_t u - \nu_2\Delta u + (u\!\cdot\!\nabla) u_2 + ((u_1-\!\nabla \nu_2)\!\cdot\!\nabla) u - \nabla u^T\!\cdot\!\nabla\nu_2 + L_1(H,\nabla H) + \na p = \mathrm{div}(2\nu\mathcal{E}(u_1)), \\
&\pa_t H - \kappa_2\Delta H - (H\!\cdot\!\nabla) u_2 + (u_1\!\cdot\!\nabla) H + \nabla\kappa_2\times\mathrm{rot}H + L_2(u,\nabla u)  = -\mathrm{rot}(\kappa\mathrm{rot}H_1), \\
& \mathrm{div}\; u = 0, \quad \mathrm{div}\; H = 0, \hspace{9cm} in\ Q
\end{aligned}
\right.
\end{equation}
Here
\begin{align*}
&L_1(H,\nabla H) = - (H_1\cdot\na) H - (H\cdot\na) H_2 - \na H^T\cdot H_2 - \na H_1^T\cdot H, \\
&L_2(u,\nabla u) = - (H_1\cdot\na) u + (u\cdot\na) H_2
\end{align*}

\subsection{Proof of Theorem \ref{thm:stability}}
Note that $t_0\in (0,T)$ is the fixed time for measurements. By the assumptions (A1)-(A2), we can replace coefficients $A$ and $b$ in Theorem \ref{thm:CEIP-sin} by $2\mathcal{E}(u_1(\cdot,t_0))$ and $\mathrm{rot} H_1(\cdot,t_0)$. This leads to
\begin{align}
\label{eq5}
\int_{\Om} (|\na \nu|^2 + s^2\la^2\vp^2(x,t_0) |\nu|^2)e^{2s\al (x, t_0)} dx \le C\int_{\Om} |\mathrm{div}(2\nu\mathcal{E}(u_1))(x,t_0)|^2 e^{2s\al (x,t_0)}dx
\end{align}
and 
\begin{align}
\nonumber
&\int_{\Om} (|\na \kappa|^2 + s^2\la^2\vp^2(x,t_0) |\kappa|^2)e^{2s\al (x,t_0)} dx \le C\int_{\Om} |\mathrm{rot}(\kappa\mathrm{rot}H_1)(x,t_0)|^2 e^{2s\al (x,t_0)}dx \\
\label{eq6}
&\hspace{4cm} + C\int_{\Om} \frac{1}{s^2\la^2\vp^2(x,t_0)}|\na (\mathrm{rot}(\kappa\mathrm{rot}H_1))|^2 e^{2s\al (x,t_0)}dx 
\end{align}
for all $\la\ge \la_0$ and all $s\ge s_0$. Henceforth, we may omit $t_0$ when there is no confusion. We multiply by $s$ on both sides of \eqref{eq5} and \eqref{eq6} and then take the summation:
\begin{equation}
\label{eq7}
\begin{aligned}
&\int_{\Om} s(|\na \nu|^2 + s^2\la^2\vp^2 |\nu|^2 + |\na \kappa|^2 + s^2\la^2\vp^2 |\kappa|^2)e^{2s\al} dx \\
&\hspace{1cm} \le C\int_{\Om} s\Big(|\mathrm{div}(2\nu\mathcal{E}(u_1))|^2 + |\mathrm{rot}(\kappa\mathrm{rot}H_1)|^2 + \frac{1}{s^2\la^2\vp^2}|\na (\mathrm{rot}(\kappa\mathrm{rot}H_1))|^2\Big) e^{2s\al}dx
\end{aligned} 
\end{equation}
holds for all $\la\ge \la_0$, $s\ge s_0$ and $t=t_0$. 
\begin{align*}
&[the\ RHS\ of\ (\ref{eq7})] \le Cs\int_\Om (|\pa_t u|^2 + |\Delta u|^2 + |\na u|^2 + |L_1(H,\na H)|^2 +|\na p|^2 )e^{2s\al}dx \\
&\hspace{2cm} + Cs\int_\Om \frac{1}{s^2\la^2\vp^2}( |\na (\pa_t H)|^2 + |\na(\Delta H)|^2 + |\pa_i\pa_j H|^2 + |\na (L_2(u,\na u))|^2)e^{2s\al}dx \\
&\hspace{2cm} + Cs\int_\Om (|\pa_t H|^2 + |\Delta H|^2 + |\na H|^2 + |L_2(u,\na u)|^2 )e^{2s\al}dx \\
&\hspace{2.8cm} \le C\int_\Om s(|\pa_t u|^2 + |\pa_t H|^2 + \frac{1}{s^2\la^2\vp^2}|\na (\pa_t H)|^2)e^{2s\al}dx + Cs\mathcal{D}_1^2
\end{align*}
where 
$$
\mathcal{D}_1:= \|u(\cdot,t_0)\|_{H^2(\Om)} + \|H(\cdot,t_0)\|_{H^3(\Om)} + \|\na p(\cdot,t_0)\|_{L^2(\Om)}.
$$
Next, we use Theorem \ref{thm:CEDP}(Carleman estimate for direct problem) to estimate the first integral on the right-hand side. Notice that $e^{2s\al(x,0)}=0,\ x\in\Om$, we calculate
\begin{equation}
\label{eq8}
\begin{aligned}
&\int_\Om s|\pa_t u(x,t_0)|^2e^{2s\al(x,t_0)}dx = \int_0^{t_0} \f{\pa}{\pa t}\bigg( \int_\Om s|\pa_t u|^2e^{2s\al}dx\bigg) dt \\
&\hspace{3.7cm}= \int_\Om\int_0^{t_0} \bigg( 2s(\pa_t u\cdot\pa_t^2 u) + 2s^2(\pa_t\al)|\pa_t u|^2 \bigg) e^{2s\al}dxdt \\
&\hspace{3.7cm}\le C(\la)\int_Q (|\pa_t^2 u|^2 + s^2\vp^2|\pa_t u|^2)e^{2s\al}dxdt.
\end{aligned}
\end{equation}
We used $s\ge 1$ and
$$
\begin{aligned}
&2s^2|\pa_t \al| = 2s^2\left|\f{l^\prime}{l^2}(e^{\la\eta}-e^{2\la})\right|\le C(\la)s^2\vp^2, \\
&2s |\pa_t u\cdot\pa_t^2 u| \le \f{1}{\vp^2}|\pa_t^2 u|^2 + s^2\vp^2|\pa_t u|^2 \le l^2(t_0)|\pa_t^2 u|^2 + s^2\vp^2|\pa_t u|^2.
\end{aligned}
$$
Similarly, we have
\begin{equation}
\label{eq9}
\begin{aligned}
&\int_\Om s|\pa_t H(x,t_0)|^2e^{2s\al(x,t_0)}dx = \int_0^{t_0} \f{\pa}{\pa t}\bigg( \int_\Om s |\pa_t H|^2e^{2s\al}dx\bigg) dt \\
&\hspace{3.7cm}= \int_\Om\int_0^{t_0} \bigg( 2s(\pa_t H\cdot\pa_t^2 H) + 2s^2(\pa_t\al)|\pa_t H|^2 \bigg) e^{2s\al}dxdt \\
&\hspace{3.7cm}\le C(\la)\int_Q (|\pa_t^2 H|^2 + s^2\vp^2|\pa_t H|^2)e^{2s\al}dxdt.
\end{aligned}
\end{equation}
and
\begin{equation}
\label{eq10}
\begin{aligned}
&s\int_\Om \frac{1}{s^2\la^2\vp(x,t_0)^2}|\na(\pa_t H(x,t_0))|^2e^{2s\al(x,t_0)}dx \\
&\hspace{1cm}= \int_0^{t_0} \f{\pa}{\pa t}\bigg( \int_\Om \frac{1}{s\la^2\vp^2}|\na(\pa_t H)|^2e^{2s\al}dx\bigg) dt \\
&\hspace{1cm}= \int_\Om\int_0^{t_0} \bigg( \frac{2}{s\la^2\vp^2}(\na(\pa_t H):\na(\pa_t^2 H)) + \frac{1}{s\la^2\vp^2}2s(\pa_t\al) |\na(\pa_t H)|^2 \bigg) e^{2s\al} dxdt \\
&\hspace{1cm}\le C(\la)\int_Q (\frac{1}{s^2\vp^2}|\na(\pa_t^2 H)|^2 + \frac{1}{\vp^2}|\na(\pa_t H)|^2 + |\na(\pa_t H)|^2)e^{2s\al} dxdt \\
&\hspace{1cm}\le C(\la)\int_Q (\frac{1}{s^2\vp^2}|\na(\pa_t^2 H)|^2 + |\na(\pa_t H)|^2)e^{2s\al} dxdt.
\end{aligned}
\end{equation}

Set $w_1=\pa_t u$, $w_2=\pa_t^2 u$, $q_1=\pa_t p$, $q_2=\pa_t^2 p$ and $h_1=\pa_t H$, $h_2=\pa_t^2 H$. Then according to our governing system (\ref{sy:MHD6}), we have
$$
\left\{
\begin{aligned}
&\pa_t u - \nu_2\Delta u + (u\!\cdot\!\nabla) u_2 + ((u_1-\!\nabla \nu_2)\!\cdot\!\nabla) u - \nabla u^T\!\cdot\!\nabla\nu_2 + L_1(H,\nabla H) + \na p = \mathrm{div}(2\nu\mathcal{E}(u_1)), \\
&\pa_t H - \kappa_2\Delta H - (H\!\cdot\!\nabla) u_2 + (u_1\!\cdot\!\nabla) H + \nabla\kappa_2\times\mathrm{rot}H + L_2(u,\nabla u)  = -\mathrm{rot}(\kappa\mathrm{rot}H_1), \\
& \mathrm{div}\; u = 0, \quad \mathrm{div}\; H = 0
\end{aligned}
\right.
$$
and
\vspace{0.2cm}
$$
\left\{
\begin{aligned}
&\pa_t w_1 - \nu_2\Delta w_1 + (w_1\!\cdot\!\nabla) u_2 + ((u_1-\!\nabla \nu_2)\!\cdot\!\nabla) w_1 - \nabla w_1^T\!\cdot\!\nabla\nu_2 + L_1(h_1,\nabla h_1) + \na q_1 \\
&\hspace{4cm} = \mathrm{div}(2\nu\mathcal{E}((\pa_t u_1))) - (u\!\cdot\!\na)(\pa_t u_2) - ((\pa_t u_1)\!\cdot\!\nabla) u - L_{1t}(H,\na H), \\
&\pa_t h_1 - \kappa_2\Delta h_1 - (h_1\!\cdot\!\nabla) u_2 + (u_1\!\cdot\!\nabla) h_1 + \nabla\kappa_2\times\mathrm{rot}h_1 + L_2(w_1,\nabla w_1)  \\
&\hspace{4cm} = -\mathrm{rot}(\kappa\mathrm{rot}(\pa_t H_1)) + (H\!\cdot\!\nabla) (\pa_t u_2) - ((\pa_t u_1)\!\cdot\!\nabla) H - L_{2t}(u,\na u), \\
& \mathrm{div}\; w_1 = 0, \quad \mathrm{div}\; h_1 = 0
\end{aligned}
\right.
$$
and
\vspace{0.2cm}
$$
\left\{
\begin{aligned}
&\pa_t w_2 - \nu_2\Delta w_2 + (w_2\!\cdot\!\nabla) u_2 + ((u_1-\!\nabla \nu_2)\!\cdot\!\nabla) w_2 - \nabla w_2^T\!\cdot\!\nabla\nu_2 + L_1(h_2,\nabla h_2) + \na q_2 \\
&\hspace{3cm} = \mathrm{div}(2\nu\mathcal{E}((\pa_t^2 u_1))) - 2(w_1\!\cdot\!\na)(\pa_t u_2) - 2((\pa_t u_1)\!\cdot\!\nabla) w_1 - 2L_{1t}(h_1,\na h_1) \\
&\hspace{3.5cm} - (u\!\cdot\!\na)(\pa_t^2 u_2) - ((\pa_t^2 u_1)\!\cdot\!\nabla) u - L_{1tt}(u,\na u), \\
&\pa_t h_2 - \kappa_2\Delta h_2 - (h_2\!\cdot\!\nabla) u_2 + (u_1\!\cdot\!\nabla) h_2 + \nabla\kappa_2\times\mathrm{rot}h_2 + L_2(w_2,\nabla w_2)  \\
&\hspace{3cm} = -\mathrm{rot}(\kappa\mathrm{rot}(\pa_t^2 H_1)) + 2(h_1\!\cdot\!\nabla) (\pa_t u_2) - 2((\pa_t u_1)\!\cdot\!\nabla) h_1 - 2L_{2t}(w_1,\na w_1) \\
&\hspace{3.5cm} + (H\!\cdot\!\nabla) (\pa_t^2 u_2) - ((\pa_t^2 u_1)\!\cdot\!\nabla) H - L_{2tt}(u,\na u), \\
& \mathrm{div}\; w_2 = 0, \quad \mathrm{div}\; h_2 = 0.
\end{aligned}
\right.
$$

Apply Theorem \ref{thm:CEDP} to $(u,p,H)$, then to $(w_1,q_1,h_1)$ and then to $(w_2,q_2,h_2)$ respectively, we obtain
\begin{equation*}
\begin{aligned}
\|&(u,p,H)\|_{\chi_s(Q)}^2 \le \;C\int_Q (|\nu|^2 + |\na \nu|^2 + |\kappa|^2 + |\na \kappa|^2 )e^{2s\alpha}dxdt \\
&\hspace{1cm} + Ce^{-s}\bigg(\|u\|_{L^2(\Sigma)}^2 + \|\na_{x,t} u\|_{L^2(\Sigma)}^2 + \|H\|_{L^2(\Sigma)}^2 + \|\na_{x,t} H\|_{L^2(\Sigma)}^2 + \|p\|_{L^2(0,T;H^{\frac{1}{2}}(\pa \Om))}^2 \bigg)
\end{aligned}
\end{equation*}
and
\begin{equation*}
\begin{aligned}
&\|(w_1,q_1,h_1)\|_{\chi_s(Q)}^2 \le \;C\int_Q (|\nu|^2 + |\na \nu|^2 + |\kappa|^2 + |\na \kappa|^2 + |u|^2 + |\na u|^2 + |H|^2 + |\na H|^2)e^{2s\alpha}dxdt \\
&\hspace{0.5cm} + Ce^{-s}\bigg(\|w_1\|_{L^2(\Sigma)}^2 + \|\na_{x,t} w_1\|_{L^2(\Sigma)}^2 + \|h_1\|_{L^2(\Sigma)}^2 + \|\na_{x,t} h_1\|_{L^2(\Sigma)}^2 + \|q_1\|_{L^2(0,T;H^{\frac{1}{2}}(\pa \Om))}^2 \bigg)
\end{aligned}
\end{equation*}
and
\begin{equation*}
\begin{aligned}
&\|(w_2,q_2,h_2)\|_{\chi_s(Q)}^2 \le \;C\int_Q (|\nu|^2 + |\na \nu|^2 + |\kappa|^2 + |\na \kappa|^2 + |u|^2 + |\na u|^2 )e^{2s\alpha}dxdt \\
&\hspace{0.5cm} + C\int_Q (|H|^2 + |\na H|^2 + |w_1|^2 + |\na w_1|^2 + |h_1|^2 + |\na h_1|^2)e^{2s\alpha}dxdt \\
&\hspace{0.5cm} + Ce^{-s}\bigg(\|w_2\|_{L^2(\Sigma)}^2 + \|\na_{x,t} w_2\|_{L^2(\Sigma)}^2 + \|h_2\|_{L^2(\Sigma)}^2 + \|\na_{x,t} h_2\|_{L^2(\Sigma)}^2 + \|q_2\|_{L^2(0,T;H^{\frac{1}{2}}(\pa \Om))}^2 \bigg)
\end{aligned}
\end{equation*}
We combine the above three estimates and absorb the lower-order terms on the right-hand side. Then we have
\begin{equation}
\label{eq11}
\begin{aligned}
\sum_{j=0}^2\|(\pa_t^j u,\pa_t^j p, \pa_t^j H)\|_{\chi_s(Q)}^2 \le\; C\int_Q (|\nu|^2 + |\na \nu|^2 + |\kappa|^2 + |\na \kappa|^2)e^{2s\al}dxdt + Ce^{-s}\mathcal{D}_2^2
\end{aligned}
\end{equation}
for fixed $\la\ge \hat{\la}$ and all $s\ge \hat{s}$. Here
$$
\begin{aligned}
\mathcal{D}_2 = &\|u\|_{H^2(0,T;H^1(\pa\Om))} + \|u\|_{H^3(0,T;L^2(\pa\Om))} + \|\pa_n u\|_{H^2(0,T;L^2(\pa\Om))} + \|H\|_{H^2(0,T;H^1(\pa\Om))} \\
&+ \|H\|_{H^3(0,T;L^2(\pa\Om))} + \|\pa_n H\|_{H^2(0,T;L^2(\pa\Om))} + \|p\|_{H^2(0,T;H^{\frac{1}{2}}(\pa\Om))} \\
= & \|u\|_{H^{0,2}(\Sigma)} + \|\na_{x,t} u\|_{H^{0,2}(\Sigma)} + \|H\|_{H^{0,2}(\Sigma)} + \|\na_{x,t} H\|_{H^{0,2}(\Sigma)} + \|p\|_{H^{1/2,2}(\Sigma)}
\end{aligned}
$$
Fix $\la$ large($\la\ge \hat{\la}$) in inequalities (\ref{eq8})-(\ref{eq10}) and then sum them up in terms of (\ref{eq11}): 
$$
\begin{aligned}
&\int_\Om s|\pa_t u(x,t_0)|^2e^{2s\al(x)}dx + \int_\Om s|\pa_t H(x,t_0)|^2e^{2s\al(x)}dx + \int_\Om s|\na(\pa_t H(x,t_0))|^2e^{2s\al(x)}dx \\
&\hspace{4cm} \le C\int_Q (|\nu|^2 + |\na \nu|^2 + |\kappa|^2 + |\na \kappa|^2)e^{2s\al}dxdt + C\mathcal{D}_2^2.
\end{aligned}
$$
Thus, (\ref{eq7}) yields
\begin{equation}
\begin{aligned}
&\int_{\Om} s(|\na \nu|^2 + s^2\vp^2 |\nu|^2 + |\na \kappa|^2 + s^2\vp^2 |\kappa|^2)e^{2s\al} dx \\
&\hspace{1cm} \le C\int_Q (|\nu|^2 + |\na \nu|^2 + |\kappa|^2 + |\na \kappa|^2)e^{2s\al}dxdt + Cs\mathcal{D}^2.
\end{aligned} 
\end{equation}
where
$$
\mathcal{D}^2\equiv \mathcal{D}_1^2 + \mathcal{D}_2^2.
$$
We can absorb the first integral on the RHS onto the LHS for $s$ large:
\begin{align*}
&\int_{\Om} (|\na \nu|^2 + |\nu|^2 + |\na \kappa|^2 + |\kappa|^2)e^{2s\al} dx \le Cs\mathcal{D}^2.
\end{align*} 
Here we used $\al(x,t)\le\al(x,t_0)$ thanks to the choice of function $l$.

In the end, we fix $s$ sufficiently large and then the weight function $e^{2s\beta}$ admits a positive lower bound in $\Om$. This completes the proof of our main result.

\qed
\vspace{0.2cm}

\subsection{Proof of Theorem \ref{thm:stabilityn}}
Note that $t_0\in (0,T)$ is the fixed time for measurements. By the assumptions (A1$^\prime$)-(A2$^\prime$), we substitute coefficients $A$ and $b$ in Theorem \ref{thm:CEIP-reg} by $2\mathcal{E}(u_1(\cdot,t_0))$ and $\mathrm{rot} H_1(\cdot,t_0)$ so that we get Carleman type estimates. However, we cannot apply the theorem directly because we only know the information about $\nu,\kappa$ on the partial boundary $\Gamma$. Therefore we introduce level sets:
\begin{align}
\label{con:levelsets1}
\Om_\ep := \{x\in \Om: d(x)>\ep\}\quad \text{for any }\ep>0. 
\end{align} 
Then select a cut-off function $\chi_1\in C^\infty(\Bbb{R}^3)$ such that $0\le\chi_1\le 1$ and
\begin{equation*}
\chi_1 =\left\{
\begin{aligned}
&1\qquad \text{for }d > 4\ep \\
&0\qquad \text{for }d < 3\ep. 
\end{aligned}
\right.
\end{equation*}
By setting $\wt \nu = \chi_1\nu$ and $\wt \kappa = \chi_1\kappa$, we apply Theorem \ref{thm:CEIP-reg} to $\wt\nu,\wt\kappa$ with $O = \Om_{3\ep}$. Thanks to the choice of $\Om_{3\ep}$ and $\chi_1$, we have $\pa\Om_{3\ep}\cap\pa\Om\subset\Gamma$ and $\wt\nu=0 $ on $\pa\Om_{3\ep}\cap \Om$ which imply
\begin{equation}
\label{eq5n}
\int_{\Om_{3\ep}} (|\na \wt\nu|^2 + s^2\la^2\vp^2(x,t_0) |\wt\nu|^2)e^{2s\vp(x,t_0)} dx \le C\int_{\Om_{3\ep}} |\mathrm{div}(2\wt\nu\mathcal{E}(u_1))(x,t_0)|^2 e^{2s\vp(x,t_0)}dx.
\end{equation}
Also we derive
\begin{align}
\nonumber
&\int_{\Om_{3\ep}} (|\na \wt\kappa|^2 + s^2\la^2\vp^2(x,t_0) |\wt\kappa|^2)e^{2s\vp(x,t_0)} dx \le C\int_{\Om_{3\ep}} |\mathrm{rot}(\wt\kappa\mathrm{rot}H_1)(x,t_0)|^2 e^{2s\vp(x,t_0)}dx \\
\label{eq6n}
&\hspace{4cm} + C\int_{\Om_{3\ep}} \frac{1}{s^2\la^2\vp^2(x,t_0)}|\na (\mathrm{rot}(\wt\kappa\mathrm{rot}H_1))(x,t_0)|^2 e^{2s\vp(x,t_0)}dx.
\end{align}
Here the boundary integrals vanished since we have condition \eqref{con:bdyn}. Direct calculations lead to the equations
$$
\mathrm{div}(2\wt\nu\mathcal{E}(u_1)) = \chi_1 \mathrm{div}(2\nu\mathcal{E}(u_1)) + \nu \mathcal{E}(u_1)\na \chi_1,\quad \na\wt\nu = \chi_1\na\nu + \nu\na\chi_1
$$
and 
$$
\mathrm{rot}(\wt\kappa\mathrm{rot}H_1) = \chi_1 \mathrm{rot}(\kappa\mathrm{rot}H_1) + \kappa \na\chi_1\times\mathrm{rot} H_1,\quad \na\wt\kappa = \chi_1\na\kappa + \kappa\na\chi_1
$$
which together with \eqref{eq5n} and \eqref{eq6n} imply
\begin{align*}
&\int_{\Om_{4\ep}} (|\na\nu|^2 + s^2\la^2\vp^2 |\nu|^2)e^{2s\vp} dx \le C\int_{\Om_{3\ep}} |\mathrm{div}(2\nu\mathcal{E}(u_1))|^2 e^{2s\vp}dx + C\int_{\Om_{3\ep}\setminus \ov{\Om_{4\ep}}} |\nu|^2 e^{2s\vp}dx 
\end{align*}
and
\begin{align*} 
&\int_{\Om_{4\ep}} (|\na\kappa|^2 + s^2\la^2\vp^2 |\kappa|^2)e^{2s\vp} dx \le C\int_{\Om_{3\ep}} \Big( |\mathrm{rot}(\kappa\mathrm{rot}H_1)|^2 + \frac{1}{s^2\la^2\vp^2}|\na (\mathrm{rot}(\kappa\mathrm{rot}H_1))|^2 \Big) e^{2s\vp}dx \\
&\hspace{5cm} + C\int_{\Om_{3\ep}\setminus \ov{\Om_{4\ep}}}\Big(|\kappa|^2 + \frac{1}{s^2\la^2\vp^2}|\na\kappa|^2\Big)e^{2s\vp}dx
\end{align*}
for all $\la\ge \la_0$, $s\ge s_0$ and $t=t_0$. Here and henceforth we may omit $t_0$ in the estimates while we exactly mean that the estimates hold for $t=t_0$. The domain of the last integral above is reduced to $\Om_{3\ep}\setminus \ov{\Om_{4\ep}}$ since the derivatives of $\chi_1$ vanish both on $\ov{\Om_{4\ep}}$ and in $\Om\setminus\ov{\Om_{3\ep}}$. In addition, we have $\vp(\cdot,t_0)=e^{\la(d+c_0)}< e^{\la(4\ep+c_0)}$ in $\Om_{3\ep}\setminus \ov{\Om_{4\ep}}$. Thus we combine the above two inequalities to obtain
\begin{equation}
\label{eq7n}
\begin{aligned} 
&\int_{\Om_{4\ep}} \big(|\na\nu|^2 + |\na\kappa|^2 + s^2\la^2\vp^2 (|\nu|^2 + |\kappa|^2)\big)e^{2s\vp} dx \le Ce^{2se^{\la(4\ep+c_0)}} (\|\nu\|_{L^2(\Om_{3\ep})}^2 + \|\kappa\|_{H^1(\Om_{3\ep})}^2) \\
&\hspace{2cm} +C\int_{\Om_{3\ep}} \Big( |\mathrm{div}(2\nu\mathcal{E}(u_1))|^2 + |\mathrm{rot}(\kappa\mathrm{rot}H_1)|^2 + \frac{1}{s^2\la^2\vp^2}|\na (\mathrm{rot}(\kappa\mathrm{rot}H_1))|^2 \Big) e^{2s\vp}dx 
\end{aligned}
\end{equation}
for all $\la\ge \la_0$, $s\ge s_0$ and $t=t_0$.

\begin{align}
\nonumber
&\text{[the second term on the RHS of \eqref{eq7n}]} \le C\int_{\Om_{3\ep}} (|\pa_t u|^2 + |\Delta u|^2 + |\na u|^2 + |L_1(H,\na H)|^2 +|\na p|^2 )e^{2s\vp}dx \\
\nonumber
&\hspace{2.5cm} + C\int_{\Om_{3\ep}} (|\pa_t H|^2 + |\Delta H|^2 + |\na H|^2 + |L_2(u,\na u)|^2 )e^{2s\vp}dx \\
\nonumber
&\hspace{2.5cm} + C\int_{\Om_{3\ep}} \frac{1}{s^2\la^2\vp^2}\big( |\na (\pa_t H)|^2 + |\na(\Delta H)|^2 + \sum_{i,j=1}^3|\pa_i\pa_j H|^2 + |\na (L_2(u,\na u))|^2\big)e^{2s\vp}dx \\
\label{eq8n}
&\hspace{2.8cm} \le C\int_{\Om_{3\ep}} (|\pa_t u|^2 + |\pa_t H|^2 + \frac{1}{s^2\la^2\vp^2}|\na (\pa_t H)|^2)e^{2s\vp}dx + C\mathcal{D}_1^2
\end{align}
where 
$$
\mathcal{D}_1:= \|u(\cdot,t_0)\|_{H^2(\Om_{3\ep})} + \|H(\cdot,t_0)\|_{H^3(\Om_{3\ep})} + \|\na p(\cdot,t_0)\|_{L^2(\Om_{3\ep})}.
$$
Next, we introduce another level sets:
\begin{align*}
Q_\ep := \{(x,t)\in Q: \psi(x,t) > \ep + c_0\}\qquad \text{for any }\ep>0.
\end{align*}
Then we have the following relations:
\vspace{0.1cm}

(\romannumeral1)  $Q_\ep\subset\Om_{\ep}\times (0,T)$, 
\vspace{0.1cm}

(\romannumeral2)  $Q_\ep\supset\Om_{\ep}\times \{t_0\}$.
\vspace{0.1cm}

\noindent In fact, if $(x,t)\in Q_\ep$, we have $d(x) - \be(t-t_0)^2 > \ep$, i.e. $d(x)>\be(t-t_0)^2 + \ep> \ep$. This means $x\in\Om_{\ep}$. (\romannumeral1) is verified. On the other hand, if $x\in\Om_\ep$ and $t=t_0$ then $\psi_2(x,t) = d(x) - \be(t-t_0)^2 + c_0 = d(x) + c_0 > \ep + c_0$. That is, $(x,t)\in Q_\ep$. (\romannumeral2) is verified.
Furthermore, we choose $\be = \f{\|d\|_{C(\ov{\Om_1})}}{\de^2}$ where $\de:= \min\{t_0, T- t_0\}$ so that 
\vspace{0.1cm}

(\romannumeral3)  $\ov{Q_\ep}\cap (\Om\times\{0,T\}) = \emptyset$
\vspace{0.1cm}

\noindent is valid. Indeed, for $\forall (x,t)\in \Om\times\{0,T\}$, $\psi(x,t) = d(x) - \be(t-t_0)^2 + c_0 \le \|d\|_{C(\ov{\Om_1})} - \be\de^2 + c_0 = c_0$. This leads to $(x,t)\notin \ov{Q_\ep}$. 

Relations (\romannumeral1) -- (\romannumeral3) guarantee that $Q_\ep$ is a sub-domain of $Q$ and $\pa Q_\ep\cap \pa Q\subset \Ga\times (0,T)$. Moreover, we assert that 
\vspace{0.1cm}

(\romannumeral4)   $\Om_{3\ep}\times (t_0-\delta_{\ep},t_0)\subset Q_{2\ep}, \quad \de_\ep := \sqrt{\f{\ep}{\be}} = \sqrt{\f{\ep}{\|d\|}}\de$.
\vspace{0.1cm}

\noindent Actually, for any $(x,t)\in \Om_{3\ep}\times (t_0-\delta_{\ep},t_0)$, we have
$$
\psi(x,t) = d(x) - \beta(t-t_0)^2 + c_0 > 3\ep - \beta\delta_{\ep}^2 + c_0 = 2\ep + c_0
$$
which implies $(x,t)\in Q_{2\ep}$. 

Now we construct a function $\eta\in C^2[0,T]$ such that $0\le \eta\le 1$ and
$$
\eta = \left\{
\begin{aligned}
& 1 \quad in\ [t_0-\f12\de_\ep, t_0+\f12\de_\ep], \\
& 0 \quad in\ [0,t_0-\de_\ep]\cup [t_0+ \de_\ep, T]
\end{aligned}
\right. 
$$
for any small $\ep<\|d\|_{C(\ov{\Om_1})}$. Then by noting that $\eta(t_0-\de_\ep) = 0$, $\eta(t_0) = 1$, we have
\begin{align}
\nonumber
&\int_{\Om_{3\ep}} |\pa_t u(x,t_0)|^2e^{2s\vp(x,t_0)}dx = \int_{t_0-\de_\ep}^{t_0} \f{\pa}{\pa t}\bigg( \eta \int_{\Om_{3\ep}} |\pa_t u|^2e^{2s\vp}dx\bigg) dt \\
\nonumber
&\hspace{3.7cm}= \int_{\Om_{3\ep}}\int_{t_0-\de_\ep}^{t_0} \bigg( 2\eta(\pa_t u\cdot\pa_t^2 u) + \big(2\eta s(\pa_t\vp) + \eta^\prime \big) |\pa_t u|^2 \bigg) e^{2s\vp}dxdt \\
\label{eq9n}
&\hspace{3.7cm}\le C(\la)s^{-1}\int_{Q_{2\ep}} (|\pa_t^2 u|^2 + s^2|\pa_t u|^2)e^{2s\vp}dxdt.
\end{align}
We used $s\ge 1$ and
\begin{align*}
&|\pa_t \vp| = 2\beta\la\vp |t-t_0| \le C(\la), \\
&2|\pa_t u\cdot\pa_t^2 u| \le \f{1}{s}|\pa_t^2 u|^2 + s|\pa_t u|^2.
\end{align*}
Similarly, we have
\begin{align}
\nonumber
&\int_{\Om_{3\ep}} |\pa_t H(x,t_0)|^2e^{2s\vp(x,t_0)}dx = \int_{t_0-\de_\ep}^{t_0} \f{\pa}{\pa t}\bigg( \eta \int_{\Om_{3\ep}} |\pa_t H|^2e^{2s\vp}dx\bigg) dt \\
\nonumber
&\hspace{3.7cm}= \int_{\Om_{3\ep}}\int_{t_0-\de_\ep}^{t_0} \bigg( 2\eta (\pa_t H\cdot\pa_t^2 H) + \big(2\eta s(\pa_t\vp) + \eta^\prime \big) |\pa_t H|^2 \bigg) e^{2s\vp}dxdt \\
\label{eq10n}
&\hspace{3.7cm}\le C(\la)s^{-1}\int_{Q_{2\ep}} (|\pa_t^2 H|^2 + s^2|\pa_t H|^2)e^{2s\vp}dxdt.
\end{align}
and
\begin{align}
\nonumber
&\int_{\Om_{3\ep}} \frac{1}{s^2\la^2\vp(x,t_0)^2}|\na(\pa_t H(x,t_0))|^2e^{2s\vp(x,t_0)}dx = \int_{t_0-\de_\ep}^{t_0} \f{\pa}{\pa t}\bigg( \eta \int_{\Om_{3\ep}} \frac{1}{s^2\la^2\vp^2}|\na(\pa_t H)|^2e^{2s\vp}dx\bigg) dt \\
\nonumber
&\hspace{1cm}= \int_{\Om_{3\ep}}\int_{t_0-\de_\ep}^{t_0} \bigg( \frac{2\eta}{s^2\la^2\vp^2}(\na(\pa_t H):\na(\pa_t^2 H)) + \frac{2\eta s(\pa_t\vp) + \eta^\prime}{s^2\la^2\vp^2} |\na(\pa_t H)|^2 \bigg) e^{2s\vp} dxdt \\
\label{eq11n}
&\hspace{1cm}\le C(\la)s^{-1}\int_{Q_{2\ep}} (s^{-2}|\na(\pa_t^2 H)|^2 + |\na(\pa_t H)|^2)e^{2s\vp} dxdt.
\end{align}

Set $w_1=\pa_t u$, $w_2=\pa_t^2 u$, $q_1=\pa_t p$, $q_2=\pa_t^2 p$ and $h_1=\pa_t H$, $h_2=\pa_t^2 H$. Furthermore we denote
$$
\mathcal{L}_1(u,p,H) := \pa_t u - \nu_2\Delta u + (u\!\cdot\!\nabla) u_2 + ((u_1-\!\nabla \nu_2)\!\cdot\!\nabla) u - \nabla u^T\!\cdot\!\nabla\nu_2 + L_1(H,\nabla H) + \na p
$$
and
$$
\mathcal{L}_2(u,H) := \pa_t H - \kappa_2\Delta H - (H\!\cdot\!\nabla) u_2 + (u_1\!\cdot\!\nabla) H + \nabla\kappa_2\times\mathrm{rot}H + L_2(u,\nabla u).
$$
Then according to our governing system (\ref{sy:MHD6}), we have
$$
\left\{
\begin{aligned}
&\ \mathcal{L}_1(u,p,H) = \mathrm{div}(2\nu\mathcal{E}(u_1)), \\
&\ \mathcal{L}_2(u,H)  = -\mathrm{rot}(\kappa\mathrm{rot}H_1), \\
&\ \mathrm{div}\; u = 0, \quad \mathrm{div}\; H = 0
\end{aligned}
\right.
$$
and
\vspace{0.2cm}
$$
\left\{
\begin{aligned}
&\mathcal{L}_1(w_1,q_1,h_1) = \mathrm{div}(2\nu\mathcal{E}((\pa_t u_1))) - (u\!\cdot\!\na)(\pa_t u_2) - ((\pa_t u_1)\!\cdot\!\nabla) u - L_{1t}(H,\na H), \\
&\mathcal{L}_2(w_1,h_1) = -\mathrm{rot}(\kappa\mathrm{rot}(\pa_t H_1)) + (H\!\cdot\!\nabla) (\pa_t u_2) - ((\pa_t u_1)\!\cdot\!\nabla) H - L_{2t}(u,\na u), \\
& \mathrm{div}\; w_1 = 0, \quad \mathrm{div}\; h_1 = 0
\end{aligned}
\right.
$$
and
\vspace{0.2cm}
$$
\left\{
\begin{aligned}
&\mathcal{L}_1(w_2,q_2,h_2) = \mathrm{div}(2\nu\mathcal{E}((\pa_t^2 u_1))) - 2(w_1\!\cdot\!\na)(\pa_t u_2) - 2((\pa_t u_1)\!\cdot\!\nabla) w_1 - 2L_{1t}(h_1,\na h_1) \\
&\hspace{5cm} - (u\!\cdot\!\na)(\pa_t^2 u_2) - ((\pa_t^2 u_1)\!\cdot\!\nabla) u - L_{1tt}(u,\na u), \\
&\mathcal{L}_2(w_2,h_2) = -\mathrm{rot}(\kappa\mathrm{rot}(\pa_t^2 H_1)) + 2(h_1\!\cdot\!\nabla) (\pa_t u_2) - 2((\pa_t u_1)\!\cdot\!\nabla) h_1 - 2L_{2t}(w_1,\na w_1) \\
&\hspace{5cm} + (H\!\cdot\!\nabla) (\pa_t^2 u_2) - ((\pa_t^2 u_1)\!\cdot\!\nabla) H - L_{2tt}(u,\na u), \\
& \mathrm{div}\; w_2 = 0, \quad \mathrm{div}\; h_2 = 0.
\end{aligned}
\right.
$$
By choosing a cut-off function $\chi_2\in C^\infty(\Bbb{R}^4)$ which satisfies $0\le\chi_2\le 1$ and
$$
\chi_2 =\left\{
\begin{aligned}
& 1\qquad \text{for }\psi > 2\ep + c_0, \\
& 0\qquad \text{for }\psi < \ep +c_0,
\end{aligned}
\right.
$$
we rewrite the above three systems
$$
\left\{
\begin{aligned}
&\ \mathcal{L}_1(\wt u,\wt p,\wt H) = \big(\mathcal{L}_1(\wt u,\wt p,\wt H) - \chi_2\mathcal{L}_1(u,p,H) \big) +\chi_2\mathrm{div}(2\nu\mathcal{E}(u_1)), \\
&\ \mathcal{L}_2(\wt u,\wt H)  = \big(\mathcal{L}_2(\wt u,\wt H) - \chi_2\mathcal{L}_2(u,H) \big)-\chi_2\mathrm{rot}(\kappa\mathrm{rot}H_1), \\
&\ \mathrm{div}\; \wt u = \na\chi_2\cdot u
\end{aligned}
\right.
$$
and
\vspace{0.2cm}
$$
\left\{
\begin{aligned}
&\mathcal{L}_1(\wt w_1,\wt q_1,\wt h_1) = \big(\mathcal{L}_1(\wt w_1,\wt q_1,\wt h_1) - \chi_2\mathcal{L}_1(w_1,q_1,h_1)\big) + \chi_2\mathrm{div}(2\nu\mathcal{E}((\pa_t u_1))) - \chi_2(u\!\cdot\!\na)(\pa_t u_2) \\
&\hspace{2cm} - \chi_2((\pa_t u_1)\!\cdot\!\nabla) u - \chi_2 L_{1t}(H,\na H), \\
&\mathcal{L}_2(\wt w_1,\wt h_1) = \big(\mathcal{L}_2(\wt w_1,\wt h_1) - \chi_2\mathcal{L}_2(w_1,h_1)\big) - \chi_2\mathrm{rot}(\kappa\mathrm{rot}(\pa_t H_1)) + \chi_2(H\!\cdot\!\nabla) (\pa_t u_2) \\
&\hspace{2cm} - \chi_2((\pa_t u_1)\!\cdot\!\nabla) H - \chi_2 L_{2t}(u,\na u), \\
& \mathrm{div}\; \wt w_1 = \na\chi_2\cdot w_1
\end{aligned}
\right.
$$
and
\vspace{0.2cm}
$$
\left\{
\begin{aligned}
&\mathcal{L}_1(\wt w_2,\wt q_2,\wt h_2) = \big(\mathcal{L}_1(\wt w_2,\wt q_2,\wt h_2) -\chi_2\mathcal{L}_1(w_2,q_2,h_2)\big) + \chi_2\mathrm{div}(2\nu\mathcal{E}((\pa_t^2 u_1))) - 2\chi_2(w_1\!\cdot\!\na)(\pa_t u_2) \\
&\hspace{0.8cm} - 2\chi_2((\pa_t u_1)\!\cdot\!\nabla) w_1 - 2\chi_2 L_{1t}(h_1,\na h_1) - \chi_2(u\!\cdot\!\na)(\pa_t^2 u_2) - \chi_2((\pa_t^2 u_1)\!\cdot\!\nabla) u - \chi_2 L_{1tt}(u,\na u), \\
&\mathcal{L}_2(\wt w_2,\wt h_2) = \big(\mathcal{L}_2(\wt w_2,\wt h_2) - \chi_2\mathcal{L}_2(w_2,h_2)\big) - \chi_2\mathrm{rot}(\kappa\mathrm{rot}(\pa_t^2 H_1)) + 2\chi_2(h_1\!\cdot\!\nabla) (\pa_t u_2) \\
&\hspace{0.8cm} - 2\chi_2((\pa_t u_1)\!\cdot\!\nabla) h_1 - 2\chi_2 L_{2t}(w_1,\na w_1) + \chi_2(H\!\cdot\!\nabla) (\pa_t^2 u_2) - \chi_2((\pa_t^2 u_1)\!\cdot\!\nabla) H - \chi_2 L_{2tt}(u,\na u), \\
& \mathrm{div}\; \wt w_2 = \na\chi_2\cdot w_2
\end{aligned}
\right.
$$
where $\wt u = \chi_2 u$, $\wt w_1 =\chi_2 w_1$, $\wt w_2 = \chi_2 w_2$, etc.

Then we can employ Carleman estimate (Theorem \ref{thm:CEDPn}) to $(\wt u,\wt p,\wt H)$, $(\wt w_1,\wt q_1,\wt h_1)$ and $(\wt w_2,\wt q_2,\wt h_2)$ respectively and obtain
\begin{equation*}
\begin{aligned}
\|&(\wt u,\wt p,\wt H)\|_{\sigma_s(Q)}^2 \le \;C\int_Q s\vp\chi_2^2(|\nu|^2 + |\na \nu|^2 + |\kappa|^2 + |\na \kappa|^2 )e^{2s\vp}dxdt \\
&\hspace{0cm} + C\int_Q \Big(\sum_{i,j=1}^3|\pa_i\pa_j\chi_2|^2 + |\na_{x,t}\chi_2|^2 + |\na(\pa_t \chi_2)|^2\Big)(s\vp(|\na_{x,t} u|^2 + |u|^2) + |\na H|^2 + |H|^2 + |p|^2)e^{2s\vp}dxdt \\
&\hspace{0cm} + Ce^{Cs}\bigg(\|\wt u\|_{L^2(\Sigma)}^2 + \|\na_{x,t} \wt u\|_{L^2(\Sigma)}^2 + \|\wt H\|_{L^2(\Sigma)}^2 + \|\na_{x,t} \wt H\|_{L^2(\Sigma)}^2 + \|\wt p\|_{L^2(0,T;H^{\frac{1}{2}}(\pa \Om))}^2 \bigg)
\end{aligned}
\end{equation*}
and
\begin{equation*}
\begin{aligned}
&\|(\wt w_1,\wt q_1,\wt h_1)\|_{\sigma_s(Q)}^2 \le \;C\int_Q s\vp\chi_2^2(|\nu|^2 + |\na \nu|^2 + |\kappa|^2 + |\na \kappa|^2 + |u|^2 + |\na u|^2 + |H|^2 + |\na H|^2)e^{2s\vp}dxdt \\
&\hspace{0cm} + C\int_Q \Big(\sum_{i,j=1}^3|\pa_i\pa_j\chi_2|^2 + |\na_{x,t}\chi_2|^2 + |\na(\pa_t \chi_2)|^2\Big)(s\vp(|\na_{x,t} w_1|^2 + |w_1|^2) + |\na h_1|^2 + |h_1|^2 + |q_1|^2)e^{2s\vp}dxdt \\
&\hspace{0cm} + Ce^{Cs}\bigg(\|\wt w_1\|_{L^2(\Sigma)}^2 + \|\na_{x,t} \wt w_1\|_{L^2(\Sigma)}^2 + \|\wt h_1\|_{L^2(\Sigma)}^2 + \|\na_{x,t} \wt h_1\|_{L^2(\Sigma)}^2 + \|\wt q_1\|_{L^2(0,T;H^{\frac{1}{2}}(\pa \Om))}^2 \bigg)
\end{aligned}
\end{equation*}
and
\begin{equation*}
\begin{aligned}
&\|(\wt w_2,\wt q_2,\wt h_2)\|_{\sigma_s(Q)}^2 \le \;C\int_Q s\vp\chi_2^2(|\nu|^2 + |\na \nu|^2 + |\kappa|^2 + |\na \kappa|^2 + |u|^2 + |\na u|^2 )e^{2s\vp}dxdt \\
&\hspace{0cm} + C\int_Q s\vp\chi_2^2(|H|^2 + |\na H|^2 + |w_1|^2 + |\na w_1|^2 + |h_1|^2 + |\na h_1|^2)e^{2s\vp}dxdt \\
&\hspace{0cm} + C\int_Q \Big(\sum_{i,j=1}^3|\pa_i\pa_j\chi_2|^2 + |\na_{x,t}\chi_2|^2 + |\na(\pa_t \chi_2)|^2\Big)(s\vp(|\na_{x,t} w_2|^2 + |w_2|^2) + |\na h_2|^2 + |h_2|^2 + |q_2|^2)e^{2s\vp}dxdt \\
&\hspace{0cm} + Ce^{Cs}\bigg(\|\wt w_2\|_{L^2(\Sigma)}^2 + \|\na_{x,t} \wt w_2\|_{L^2(\Sigma)}^2 + \|\wt h_2\|_{L^2(\Sigma)}^2 + \|\na_{x,t} \wt h_2\|_{L^2(\Sigma)}^2 + \|\wt q_2\|_{L^2(0,T;H^{\frac{1}{2}}(\pa \Om))}^2 \bigg)
\end{aligned}
\end{equation*}
for all large fixed $\la$ and all $s\ge \hat{s}$. 
Combining the above three estimates and absorb the lower-order terms on the RHS which leads to
\begin{equation}
\label{eq12n}
\begin{aligned}
\sum_{j=0}^2\|(\pa_t^j u,\pa_t^j p, \pa_t^j H)\|_{\sigma_s(Q_{2\ep})}^2 \le\; C\int_{Q_\ep} s\vp(|\nu|^2 + |\na \nu|^2 + |\kappa|^2 + |\na \kappa|^2)e^{2s\vp}dxdt + Low + Ce^{Cs}\mathcal{D}_2^2
\end{aligned}
\end{equation}
for all large fixed $\la\ge \la_0$ and all $s\ge \hat{s}$. Here
$$
\begin{aligned}
&Low := C\int_{Q_\ep} \Big(\sum_{i,j=1}^3|\pa_i\pa_j\chi_2|^2 + |\na_{x,t}\chi_2|^2 + |\na(\pa_t \chi_2)|^2\Big)(s\vp(|\na_{x,t} u|^2 + |u|^2) + |\na H|^2 + |H|^2 + |p|^2)e^{2s\vp}dxdt \\
&\hspace{0cm} +C\int_{Q_\ep} \Big(\sum_{i,j=1}^3|\pa_i\pa_j\chi_2|^2 + |\na_{x,t}\chi_2|^2 + |\na(\pa_t \chi_2)|^2\Big)(s\vp(|\na_{x,t} w_1|^2 + |w_1|^2) + |\na h_1|^2 + |h_1|^2 + |q_1|^2)e^{2s\vp}dxdt \\
&\hspace{0cm} + C\int_{Q_\ep} \Big(\sum_{i,j=1}^3|\pa_i\pa_j\chi_2|^2 + |\na_{x,t}\chi_2|^2 + |\na(\pa_t \chi_2)|^2\Big)(s\vp(|\na_{x,t} w_2|^2 + |w_2|^2) + |\na h_2|^2 + |h_2|^2 + |q_2|^2)e^{2s\vp}dxdt \\
&\mathcal{D}_2 := \sum_{j=0}^2 \Big(\|\pa_t^j u\|_{L^2(\Gamma\times (0,T))} + \|\pa_t^j(\na_{x,t}u)\|_{L^2(\Gamma\times (0,T))}\Big) + \sum_{j=0}^2 \Big(\|\pa_t^j H\|_{L^2(\Gamma\times (0,T))} + \|\pa_t^j(\na_{x,t}H)\|_{L^2(\Gamma\times (0,T))}\Big) \\
&\hspace{1cm}  + \sum_{j=0}^2 \Big(\|\pa_t^j p\|_{L^2(0,T;H^{\frac{1}{2}}(\Gamma))}\Big). 
\end{aligned}
$$
From the choice of $\chi_2$, we see that the derivatives of it vanishes in $Q_{2\ep}$. Since $\psi(x,t)$ has an upper bound $2\ep + c_0$ when $(x,t)$ is outside of $Q_{2\ep}$, we can simplify $Low$:
$$
Low \le Cse^{2se^{\la(2\ep+c_0)}}\sum_{j=0}^2\Big(\|\pa_t^j u\|_{H^{1,1}(Q_\ep)}^2 + \|\pa_t^j H\|_{H^{1,0}(Q_\ep)}^2 + \|\pa_t^j p\|_{L^2(Q_\ep)}^2\Big) =: Cse^{2se^{\la(2\ep+c_0)}} M_1^2
$$
where $M$ is defined in Theorem \ref{thm:stabilityn}. In terms of \eqref{eq12n}, immediately we have
\begin{align*}
&\int_{Q_{2\ep}} \big(s^2|\pa_t u|^2 + s^2|\pa_t^2 u|^2 + s^2|\pa_t H|^2 + s^2|\pa_t^2 H|^2 + |\na (\pa_t H)|^2) + |\na(\pa_t^2 H)|^2\big) e^{2s\vp}dxdt \\
&\hspace{2cm} \le C\int_{Q_\ep} (|\nu|^2 + |\na \nu|^2 + |\kappa|^2 + |\na \kappa|^2)e^{2s\vp}dxdt + Ce^{2se^{\la(2\ep+c_0)}} M_1^2 + Ce^{Cs}\mathcal{D}_2^2.
\end{align*}
We thus insert above inequality to \eqref{eq9n}-\eqref{eq11n} and obtain 
$$
\begin{aligned}
&C\int_{\Om_{3\ep}} (|\pa_t u(x,t_0)|^2 + |\pa_t H(x,t_0)|^2 + \frac{1}{s^2\la^2\vp^2(x,t_0)}|\na(\pa_t H(x,t_0))|^2)e^{2s\vp(x,t_0)}dx \\
&\hspace{1cm} \le Cs^{-1}\int_{Q_{\ep}} (|\nu|^2 + |\na \nu|^2 + |\kappa|^2 + |\na \kappa|^2)e^{2s\vp}dxdt + Ce^{2se^{\la(2\ep+c_0)}} M_1^2 + Ce^{Cs}\mathcal{D}_2^2
\end{aligned}
$$
which along with \eqref{eq7n} and \eqref{eq8n} yields
\begin{align}
\nonumber
&\int_{\Om_{4\ep}} (|\na \nu|^2 + s^2\vp^2 |\nu|^2 + |\na \kappa|^2 + s^2\vp^2 |\kappa|^2)e^{2s\vp(x,t_0)} dx \le Cs^{-1}\int_{Q_\ep} (|\nu|^2 + |\na \nu|^2 + |\kappa|^2 + |\na \kappa|^2)e^{2s\vp}dxdt \\
\label{eq13n}
&\hspace{2cm} + Ce^{2se^{\la(4\ep+c_0)}}(\|\nu\|_{L^2(\Om_{3\ep})}^2 + \|\kappa\|_{H^1(\Om_{3\ep})}^2) + Ce^{2se^{\la(2\ep+c_0)}} M_1^2 + Ce^{Cs}\mathcal{D}^2
\end{align} 
where
\begin{align*}
\mathcal{D}^2 = \mathcal{D}_1^2 + \mathcal{D}_2^2.
\end{align*}
We carefully calculate the first term on the RHS of \eqref{eq13n}: 
\begin{align*}
&Cs^{-1}\int_{Q_\ep} (|\nu|^2 + |\na \nu|^2 + |\kappa|^2 + |\na \kappa|^2)e^{2s\vp}dxdt \\
&= Cs^{-1}\int_{Q_{4\ep}} (|\nu|^2 + |\na \nu|^2 + |\kappa|^2 + |\na \kappa|^2)e^{2s\vp}dxdt + Cs^{-1}\int_{Q_\ep\setminus Q_{4\ep}} (|\nu|^2 + |\na \nu|^2 + |\kappa|^2 + |\na \kappa|^2)e^{2s\vp}dxdt \\
&\le Cs^{-1}\int_0^T\int_{\Om_{4\ep}} (|\nu|^2 + |\na \nu|^2 + |\kappa|^2 + |\na \kappa|^2)e^{2s\vp}dxdt + CTe^{2se^{\la(4\ep+c_0)}}\int_{\Om_{\ep}} (|\nu|^2 + |\na \nu|^2 + |\kappa|^2 + |\na \kappa|^2)dx \\
&\le Cs^{-1} \int_{\Om_{4\ep}} (|\nu|^2 + |\na \nu|^2 + |\kappa|^2 + |\na \kappa|^2)e^{2s\vp(x,t_0)}dx + Ce^{2se^{\la(4\ep+c_0)}}(\|\nu\|_{H^1(\Om_{\ep})}^2 + \|\kappa\|_{H^1(\Om_{\ep})}^2)
\end{align*}
where we note that $\vp(x,t)$ attains its maximum at $t=t_0$ for any $x\in \Om$. Thus we can absorb the first term on the RHS above by taking $s$ large (e.g. $s\ge s_1$) which gives
\begin{align}
\label{eq14n}
\int_{\Om_{4\ep}} (|\na \nu|^2 + s^2\vp^2 |\nu|^2 + |\na \kappa|^2 + s^2\vp^2 |\kappa|^2)e^{2s\vp(x,t_0)} dx \le Ce^{2se^{\la(4\ep+c_0)}} M^2 + Ce^{Cs}\mathcal{D}^2.
\end{align}
Here 
$$
M^2 = M_1^2 + \|\nu\|_{H^1(\Om_{3\ep})}^2 + \|\kappa\|_{H^1(\Om_{3\ep})}^2.
$$
On the other hand, the LHS of \eqref{eq14n} can be estimated from below:
\begin{align*}
&\int_{\Om_{4\ep}} (|\na \nu|^2 + s^2\vp^2 |\nu|^2 + |\na \kappa|^2 + s^2\vp^2 |\kappa|^2)e^{2s\vp(x,t_0)} dx 
\ge \int_{\Om_{5\ep}} (|\na \nu|^2 + |\nu|^2 + |\na \kappa|^2 + |\kappa|^2)e^{2s\vp(x,t_0)} dx \\
&\hspace{8.2cm} \ge e^{2se^{\la(5\ep+c_0)}}(\|\nu\|_{H^1(\Om_{5\ep})}^2 + \|\kappa\|_{H^1(\Om_{5\ep})}^2)
\end{align*}
Therefore \eqref{eq14n} indicates
\begin{align}
\label{eq15n}
&\|\nu\|_{H^1(\Om_{5\ep})}^2 + \|\kappa\|_{H^1(\Om_{5\ep})}^2 \le Ce^{-C_0s}M^2 + Ce^{Cs}\mathcal{D}^2
\end{align}
for all $s\ge s_2=\max\{s_0,s_1\}$ with $C_0 = 2e^{\la(4\ep+c_0)}(e^{\la\ep} -1)>0$. We can substitute $s$ by $s+s_2$ so that \eqref{eq15n} holds for all $s\ge 0$. 

Finally, we apply a well-known argument to reach the stability inequality of H\"{o}lder type \eqref{eq:loc-stab}. For reference, see the final step of the proof on pp.28 in \cite{Y09}. This completes the proof of our main result.

\qed

\noindent $\mathbf{Remark.}$ Sometimes the following case is considered. The coefficients $\nu,\kappa$ are given in a more general form of 
$$
\nu(x,t) = \tilde{\nu}(x)r_1(x,t), \quad \kappa(x,t) = \tilde{\kappa}(x)r_2(x,t),\qquad for\ (x,t)\in Q
$$
provided $r_1,r_2$ are two given functions. The above stability inequality for $\tilde{\nu}$ and $\tilde{\kappa}$ still holds if we add some smoothness and nonzero assumptions to $r_1$ and $r_2$. The proof is similar but it is necessary to pay more attention to the order of large parameter $s$.


\end{document}